\newlength{\defbaselineskip}
\newcommand{\setlinespacing}[1]%
           {\setlength{\baselineskip}{#1 \defbaselineskip}}
\theoremstyle{plain}
\newtheorem{thm}{Theorem}[section]
\newtheorem{lem}[thm]{Lemma}
\theoremstyle{definition}
\newtheorem{defn}{Definition}[section]
\newtheorem{ass}{Assumption}[section]
\newtheorem{rmk}{Remark}[section]
\newcommand{\eps}{\varepsilon}
\newcommand{\vrh}{\varrho}
\newcommand{\cR}{\mathcal{R}}
\newcommand{\cL}{\mathcal{L}}
\newcommand{\cB}{\mathcal{B}}
\newcommand{\cS}{\mathcal{S}}
\newcommand{\cF}{\mathcal{F}}
\newcommand{\cU}{\mathcal{U}}
\newcommand{\cY}{\mathcal{Y}}
\newcommand{\cZ}{\mathcal{Z}}
\newcommand{\bE}{\mathbb{E}}
\newcommand{\bP}{\mathbb{P}}
\newcommand{\bR}{\mathbb{R}}
\newcommand{\rd}{\mathrm{d}}
\newcommand{\sF}{\mathscr{F}}
\newcommand{\sP}{\mathscr{P}}
\newcommand{\la}{\langle}
\newcommand{\ra}{\rangle}
\newcommand{\ptl}{\partial}
\makeatletter\@addtoreset{equation}{section} \makeatother
\begin{document}

\title{Numerical Approximations of Coupled Forward-Backward SPDEs
}

\author{  Hasib Uddin Molla\footnotemark[1] \and Jinniao Qiu\footnotemark[1] }
\footnotetext[1]{Department of Mathematics \& Statistics, University of Calgary, 2500 University Drive NW, Calgary, AB T2N 1N4, Canada. \textit{Email}: \texttt{jinniao.qiu@ucalgary.ca} (J. Qiu), \texttt{mdhasibuddin.molla@ucalgary.ca} (H. U. Molla). J. Qiu was partially supported by the National Science and Engineering Research Council of Canada and by the start-up funds from the University of Calgary. }

\maketitle

\begin{abstract}
We propose and study a scheme combining the finite element method and machine learning techniques for the numerical approximations of coupled nonlinear forward-backward stochastic partial differential equations (FBSPDEs) with homogeneous Dirichlet boundary conditions. Precisely, we generalize the pioneering work of Dunst and Prohl [\textit{SIAM J. Sci. Comp., 38(2017), 2725--2755}] by considering general \textit{nonlinear} and \textit{nonlocal} FBSPDEs with more inclusive coupling; self-contained proofs are provided and different numerical techniques for the resulting finite dimensional equations are adopted.     For such FBSPDEs, we first prove the existence and uniqueness of the strong solution as well as of the weak solution. Then the finite element method in the spatial domain leads to approximations of FBSPDEs by finite-dimensional forward-backward stochastic differential equations (FBSDEs) which are numerically computed by using some deep learning-based schemes.  The convergence analysis  is addressed for the spatial discretization of FBSPDEs, and the numerical examples, including both decoupled and coupled cases, indicate that our methods are quite efficient. 
\end{abstract}

{\bf Mathematics Subject Classification (2010):} 60H15, 65C05, 93E20, 35D35

{\bf Keywords:} stochastic partial differential equation, numerical analysis, weak solution, strong solution, deep learning, non-Markovianity

\section{Introduction}
Let $(\Omega,\sF,\{\sF_t\}_{t\geq0},\bP)$ be a complete filtered probability space on which is defined a $k$-dimensional Wiener process $W=\{W_t:t\in[0,\infty)\}$ such that $\{\sF_t\}_{t\geq0}$ is the natural filtration generated by $W$ and augmented by all the
$\bP$-null sets in $\sF$. We denote by $\sP$ the $\sigma$-algebra of the predictable sets on $\Omega\times[0,T]$ associated with $\{\sF_t\}_{t\geq0}$. 

In this paper, we consider the following coupled forward and backward stochastic partial differential equations with homogeneous Dirichlet boundary conditions:
\begin{equation}\label{FSPDE}
  \left\{\begin{array}{l}
  \begin{split}
  \rd\rho(t,x)&=\bigg(\Delta \rho(t,x)+F\big(t,x,\rho(t),\nabla\rho(t),u(t),\nabla u(t),\psi(t)\big)\bigg)\,\rd t\\
  &\quad
  -\sum_{i=1}^{k}f^i\big(t,x,\rho(t),u(t)\big) \,\rd W_{t}^{i}, \quad (t,x)\in[0,T]\times D;\\
    \rho(0,x)&=\rho_0(x), \quad x\in D;\\
    \rho(t,x)\bigg|_{\partial D}&=0,\quad t\in[0,T],
    \end{split}
  \end{array}\right.
\end{equation}
and 
\begin{equation}\label{BSPDE}
\left\{\begin{array}{l}
\begin{split}
-\rd u(t,x)&=\bigg(\Delta u(t,x)+G\big(t,x,\rho(t),\nabla\rho(t),u(t),\nabla u(t),\psi(t)\big)\bigg)\,\rd t \\
&\quad
-\sum_{i=1}^{k}\psi^i (t,x)\,\rd W_{t}^{i}, \quad
(t,x)\in[0,T]\times D;\\
u(T,x)&=g(x,\rho(T)), \quad x\in D;\\
u(t,x)\bigg|_{\partial D} &=0,\quad t\in[0,T].
\end{split}
\end{array}\right.
\end{equation}
Here and throughout this paper, $D\subset\bR^d$ is a bounded domain with $C^2$ boundary $\ptl D$ and
$T\in(0,\infty)$ a finite deterministic time.


The forward-backward stochastic partial differential equation (FBSPDE) comprising of two equations like \eqref{FSPDE} and \eqref{BSPDE} arises naturally in many applications of probability theory and stochastic processes, for instance in the nonlinear filtering and stochastic control theory for processes with incomplete information, as the (usually coupled) system of the Duncan-Mortensen-Zakai filtration equation (or controlled SPDE) and its adjoint equation (for instance, see \cite{Bernsoussan,dunst2016forward,fuhrman2013stochastic,Hu_Ma_Yong02}); along this line, the study of FBSPDEs can date back to about forty years ago (see \cite{Bernsoussan}). On the other side, in the mean-field game theory certain classes of FBSPDEs \eqref{FSPDE}-\eqref{BSPDE} are raised as the mean-field game system with common noise; the former is a forward stochastic Kolmogorov equation describing the evolution of the conditional distributions of the states of the players given the common noise, while the  latter is the stochastic Hamilton-Jacobi-Bellman  equation characterizing the value function of the optimization problem when the flow of conditional distributions is given; more details are referred to \cite{bensoussan2017interpretation,CardaliaguetetelMFG2015,carmona2014master} for instance.

For the decoupled case when $F$ is independent of $(u,\nabla u,\psi)$ or $(G,g)$ is independent of $\rho$, the equations \eqref{FSPDE} and \eqref{BSPDE} may be solved separately and they have been extensively studied in the literature; see \cite{bayer2020pricing,conus2019weak,da2014stochastic,du2011strong,Kryov1996,LI2020124518,MaYong97BSPDE,PengBSPDEnonlin,qiu2018viscosity,wang2020l2} among many others. However, there are few results on the wellposedness of coupled FBSPDEs, let alone numerical approximations. Indeed, a class of fully coupled FBSPDEs on the whole space was studied in \cite{HongYin2014FBSPDE} where the FBSPDEs are viewed as natural extensions of  (finite dimensional) forward-backward stochastic differential equations (FBSDEs) and the existence and uniqueness of weak solution (in the PDE/SPDE theory) is proved in the spirit of approaches for FBSDEs, while in \cite{CardaliaguetetelMFG2015} the wellposedness in H\"older spaces is addressed for a class of FBSPDEs with linear coefficient $f$ and periodic boundary conditions under certain strong assumptions; meanwhile, numerical methods for a special class of coupled linear FBSPDEs with
\begin{align*}
F\big(t,x,\rho(t),\nabla\rho(t),u(t),\nabla u(t),\psi(t)\big) &=  u(t), \quad 
f^i(t,x,\rho(t),u(t))=-\nu^i (t)\rho(t),\quad i=1,\dots, k,\\
G\big(t,x,\rho(t),\nabla\rho(t),u(t),\nabla u(t),\psi(t)\big) &= \sum_{i=1}^k \nu^i(t)\psi^i(t) + h(t),\quad
G(x,\rho(T))= \Psi(x),
\end{align*}
 may be found in the pioneering work of Dunst and Prohl   \cite{dunst2016forward} where the convergence analysis is established with finite element method for spatial discretization and the least square Monte Carlo simulation mixed with Picard type iterations or stochastic gradient method for the approximations of the resulting (finite-dimensional) FBSDEs. 
 
In this work, we consider coupled FBSPDEs like  \eqref{FSPDE}-\eqref{BSPDE} with homogeneous Dirichlet boundary conditions where coefficients may be \textit{nonlinear} and \textit{nonlocal}. The existence and uniqueness of strong solution of coupled FBSPDEs is derived under Lipschitz conditions. For numerical simulations,  the coupled FBSPDE is discretized in spatial domain with finite element method, which results in finite dimensional coupled FBSDEs in temporal domain.  We  address the wellpossedness of such FBSDEs as well as the convergence rate for the spatial discretization. Finally, the resulting FBSDEs are numerically computed with  some existing deep learning-based schemes and we present two numerical examples which include both decoupled and coupled cases showing the efficiency of our methods. The approaches mix the existing probability theory and stochastic analysis, (S)PDE theory, and the numerical analysis in both  deterministic and stochastic settings.

To overcome the so-called curse of dimensionality, several deep learning-based algorithms have been proposed and studied for numerical computations of partial differential equations (PDEs); see \cite{weinan2017deep,han2018deep,Han&Long2019coupleddeep,hure2019deep,JiPeng2020CoupledBSDE} among many others. As these deep learning schemes are based on the equivalence representation relationship between \textit{deterministic} PDEs and associated \textit{Markovian} FBSDEs, such numerical methods for PDEs and FBSDEs are one and the same. This paper extends the applications of these numerical methods (or their modifications) to FBSPDEs like \eqref{FSPDE}-\eqref{BSPDE} that may be coupled, nonlinear, and/or nonlocal; nevertheless, the high-dimensionality is not due to the spatial domain of  \eqref{FSPDE}-\eqref{BSPDE} but from the resulting finite-dimensional FBSDEs after the spatial discretization of FBSPDE \eqref{FSPDE}-\eqref{BSPDE} with finite element methods, and the deep learning schemes are used to numerically compute solutions of these approximating (finite-dimensional) FBSDEs. On the other hand, many of such FBSPDEs arise from the non-Markovian type stochastic controls/games, with the associated representation systems (FBSDEs) being non-Markovian and even of McKean–Vlasov type (see \cite{bensoussan2017interpretation,CardaliaguetetelMFG2015,carmona2014master,PengBSPDEnonlin,qiu2018hormander,tang2005semi}), and this incurs the inapplicability of the existing deep learning-based methods that are only working under Markovian framework. Because of this, we adopt in this work the strategy: first discretize the FBSPDE \eqref{FSPDE}-\eqref{BSPDE} in spatial domain and then numerically compute the resulting finite-dimensional FBSDEs with the existing deep learning methods.

 The rest of this paper is organized as follows. In section 2, we give the notation and assumptions as well as a brief introduction on the finite element methods and the deep neural networks. Section 3 is devoted to the wellposedness of coupled FBSPDEs for both weak and strong solutions. Then the rate of convergence for semi-discrete approximations is proved in Section 4 where the wellposedness of the finite dimensional approximating FBSDEs is also addresed. In Section 5, we introduce and discuss three different deep learning-based methods for the numerical approximations of Markovian FBSDEs. Finally, two numerical examples are presented in Section 6 and the proof of Lemma \ref{lem1} is given in the appendix.

\section{Preliminary}

\subsection{Notations and assumptions}

Denote by $|\cdot|$ the norm in  Euclidean spaces. For each $l\in \mathbb{N}^+$ and domain $D\subset \bR^d$, denote by $C_c^{\infty}(D;\bR^l)$ the space of infinitely differentiable functions $f:D\rightarrow \bR^l$ with compact supports in $D$. We write $C_c^{\infty}:=C_c^{\infty}(\bR^l)=C_c^{\infty}(D;\bR^l)$ when there is no confusion on the dimension. The Lebesgue measure in $\bR^d$ will be denoted by $\rd x$. Also when there is no confusion on the dimension we write $L^2:=L^2(\bR^l)=L^2(D;\bR^l)$ for the usual Lebesgue integrable space with scalar product and norm defined
$$
\langle \phi,\,\psi\rangle=\sum_{j=1}^l\int_{D}\phi^j(x)\psi^j(x)dx,\quad \|\phi\|=\langle\phi,\,\phi\rangle^{1/2},\,\,\forall
\phi,\psi\in L^2(D;\bR^l).
$$
In addition, for each $(n,p)\in\bR\times \left[1,\infty\right]$ we define  the $n$-th order Sobolev space $(H^{n,p}(D;\bR^l),\|\cdot\|_{n,p}) $ as usual; for simplicity, we may write $H^{n,p}(D;\bR^l)$ as $H^{n,p}$ when there is no ambiguity about the dimension and domain. Denote by $H^{1,2}_0$ the space of $f\in H^{1,2}$ with vanishing traces on $\partial D$, i.e., $H^{1,2}_0=\{f\in H^{1,2}: f\cdot1_{\partial D}=0\}$. Write $H^{2,2}_0=H^{2,2}\cap H^{1,2}_0$.

Let $V$ be a Banach space equipped with norm $\|\cdot\|_V$. For $p\in[1,\infty)$, $\cS ^p (V)$ is the set of all the $V$-valued,
$(\sF_t)$-adapted and continuous processes $\{X_{t}\}_{t\in [0,T]}$ such
that
\begin{equation*}
\|X\|_{\cS^p(V)}:=\bigg\|\max_{t\in [0,T]}\|X(t)\|_V\bigg\|_{L^p(\Omega,\cF,\bP)}<\infty.
\end{equation*}
Denote by $\mathcal{L}^p(V)$ the space of all  the $V$-valued,
$(\sF_t)$-adapted processes $\{X_{t}\}_{t\in [0,T]}$ such
that
\begin{equation*}
\|X\|_{\cL^p(V)}:=\bigg(\bE\left[\int_{0}^{T}\|X(t)\|^p_V \rd t\right]\bigg)^{\frac{1}{p}}<\infty.
\end{equation*}
Obviously, $\left(\cS^p(V),\,\|\cdot\|_{\cS^p(V)}\right)$ and $\left(\mathcal{L}^p(V),\|\cdot\|_{\mathcal{L}^p(V)}\right)$
are Banach spaces. By convention, we treat elements of spaces like $\cS^2(H^{n,2})$ and $\cL^2(H^{n,2})$ as functions rather than distributions or classes of equivalent functions, and if a function of such class admits a version with better properties, we always denote this version by itself. For example, if $u\in \cL^2(H^{n,2})$ and $u$ admits a version lying in $\mathcal{S}^2(H^{n,2})$, we always adopt the modification $u\in \cL^2(H^{n,2})\cap \cS^2(H^{n,2})$.\\[3pt]

For the FBSPDE \eqref{FSPDE}-\eqref{BSPDE},  following are the assumptions we use throughout this paper.
\begin{ass} \label{ass1}
	\hfill 
\begin{enumerate}
	\item[(a)] For each 
	$(\rho,\bar{\rho},u, \bar{u},\psi)\in L^2(\bR)\times L^2(\bR^d)\times L^2(\bR)\times L^2(\bR^d)\times L^2(\bR^k)$, the function
	\begin{equation*}
	F(\omega,t,x,\rho,\bar{\rho},u, \bar{u},\psi):~\Omega\times[0,T]\times D\rightarrow\bR
	\end{equation*}
	is $\sP\otimes\cB(D)$-measurable. There exist positive constants $L^F_1,L^F_2$
	such that for all \\$(\rho_1,\bar{\rho_1},u_1, \bar{u_1},\psi_1)$, $(\rho_2,\bar{\rho_2},u_2, \bar{u_2},\psi_2)\in L^2(\bR)\times L^2(\bR^d)\times L^2(\bR)\times L^2(\bR^d)\times L^2(\bR^k)$
	and $(\omega,t)\in \Omega\times[0,T]$,
	\begin{align*}
	&\|F(\omega,t,\rho_1,\bar{\rho_1},u_1, \bar{u_1},\psi_1)-F(\omega,t,\rho_2,\bar{\rho_2},u_2, \bar{u_2},\psi_2)\|\\
	&\leq L^F_1\bigg(\|\rho_1-\rho_2\|+\|\bar{\rho_1}-\bar{\rho_2}\|\bigg)+L^F_2\bigg(\|u_2-u_1\|+\|\bar{u_2}- \bar{u_1}\|+\|\psi_2-\psi_1\|\bigg).
	\end{align*}\rm
	\item[(b)] For each $(\rho,u)\in L^2(\bR)\times L^2(\bR)$, the function
	\begin{equation*}
	f(\omega,t,x,\rho,u):~\Omega\times[0,T]\times D \rightarrow\bR^k
	\end{equation*}
	is $\sP\otimes\cB(D)$-measurable. There exists positive constants $L_1^f,L_2^f$ and $\tilde{L}^f$
	such that for all $\rho_1,\rho_2,u_1,u_2\in L^2(\bR)$ and $\rho,u\in H^{1,2}_0$
	and $(\omega,t)\in \Omega\times[0,T]$,
	\begin{equation*}
	\begin{split}
	\|f(\omega,t,\rho_1,u_1)&-f(\omega,t,\rho_2,u_2)\|\leq L_1^f\|\rho_1-\rho_2\|+L_2^f\|u_1-u_2\|,\\
	\|f(\omega,t,\rho,u)\|&_{1,2}\le \tilde{L}^f\big(1+\|\rho\|_{1,2}+\|u\|_{1,2}\big).
	\end{split}
	\end{equation*}\rm
	\item[(c)] 	$\rho_0\in  L^2(\Omega;\sF_0, H^{1,2}_0)$ and $F^0_t\in \cL^2(L^2),f^0_t\in\cL^2(H^{1,2}_0)$ where
	$$
	F_t^0=F(\omega,t,0,0,0,0,0),f_t^0=f(\omega,t,0,0); \quad (\omega,t)\in \Omega\times [0,T].
	$$
\end{enumerate}
      
\end{ass}



\begin{ass}\label{ass2}
	\hfill
	\begin{enumerate}
	\item[(a)] For each 
	$(\rho,\bar{\rho},u,\bar{u},\psi)\in L^2(\bR)\times L^2(\bR^d)\times L^2(\bR)\times L^2(\bR^d)\times L^2(\bR^k)$, the function
\begin{equation*}
G(\omega,t,x,\rho,\bar{\rho},u,\bar{u},\psi):~\Omega\times[0,T]\times D\rightarrow\bR
\end{equation*}
is $\sP\otimes\cB(D)$-measurable. There exist positive constants $L^G_1,L^G_2$
such that for all \\$(\rho_1,\bar{\rho_1},u_1, \bar{u_1},\psi_1)$, $(\rho_2,\bar{\rho_2},u_2, \bar{u_2},\psi_2)\in L^2(\bR)\times L^2(\bR^d)\times L^2(\bR)\times L^2(\bR^d)\times L^2(\bR^k)$
and $(\omega,t)\in \Omega\times[0,T]$,
\begin{equation*}
\begin{split}
&\|G(\omega,t,\rho_1,\bar{\rho_1},u_1,\bar{u_1},\psi_1)-G(\omega,t,\rho_2,\bar{\rho_2},u_2,\bar{u_2},\psi_2)\|\\
&\leq L^G_1\bigg(\|\rho_1-\rho_2\|+\|\bar{\rho_1}-\bar{\rho_2}\|\bigg)+L^G_2\bigg(\|u_2-u_1\|+\|\bar{u_2}-\bar{u_1}\|+\|\psi_2-\psi_1\|\bigg).
\end{split}
\end{equation*}\rm
		\item[(b)] For each $\rho\in L^2(\bR)$, the function
		\begin{equation*}
		g(\omega,x,\rho):~\Omega\times D \rightarrow\bR
		\end{equation*}
		is $\cF_T\otimes\cB(D)$-measurable. There exist positive constants $L^g$ and $\tilde{L}^g$
		such that for all $\rho_1,\rho_2\in L^2$, $\rho_3\in L^2(\Omega;\sF_T,H^{1,2}_0)$
		and $\omega\in \Omega$,
		\begin{equation*}
		\begin{split}
		\|g(\omega,\rho_1)&-g(\omega,\rho_2)\|\leq L^g\|\rho_1-\rho_2\|,
		\end{split}
		\end{equation*}\rm
		and
		\begin{equation*}
		\|g(\omega,\rho_3)\|_{1,2}\le \tilde{L}^g\bigg(1+\|\rho_3\|_{1,2}\bigg).
		\end{equation*}
		\item[(c)] $g^0\in  L^2(\Omega;\sF_T, H^{1,2}_0)$ and $G_t^0\in \cL^2(L^2)$ where
		$$
		g^0=g(\omega,0),G_t^0=G(\omega,t,0,0,0,0,0); \quad (\omega,t)\in \Omega\times [0,T].
		$$
	\end{enumerate}
\end{ass}

\begin{rmk}\label{rmk-nonlocal}
The above assumptions regarding Lipschitz continuity and linear growth are more or less standard. However, it is worth noting that the dependence of coefficients $F$, $G$, and $g$ on the unknown random fields may be nonlocal; for instance, the assumption on $g$ covers some classes of functions of the following form:
\begin{align*}
g(x, \rho(T))=\int_{D} h(x,y) \rho(T,y)\, dy,\quad \text{for } x\in D, \text{ given function }h:\, D\times D\rightarrow \bR. 
\end{align*}
Such nonlocal dependence is substantially demanding in the mean-field game systems with common noise (see \cite{bensoussan2017interpretation,CardaliaguetetelMFG2015,carmona2014master} for instance).  In the existing literature, this nonlocal dependence is not taken into account in \cite{dunst2016forward,HongYin2014FBSPDE}. In the theory of FBSPDEs for solutions in H\"older spaces in \cite{CardaliaguetetelMFG2015}, the nonlocal dependence is demanded and allowed; nevertheless, the FBSPDEs therein are equipped with linear coefficients $f^j(t,x,\rho(t),u(t))=\nu^j \sum_{i=1}^d \nabla_i \rho(t,x)$ for some constants $\nu^j$, $j=1,\dots,k$, and the associated stochastic integral can be and is actually disappeared in \cite{CardaliaguetetelMFG2015} by using the It\^o-Kunita-Wentzell formula.
\end{rmk}

\subsection{Finite Element Approximations}
Let $\mathcal{T}_h=\{K\}$ be a triangulation of the convex polyhedral domain $D\subset \mathbb{R}^d$ into regular simplicial elements $K$ with $h:=\max{\{diam(K):K\in\mathcal{T}_h\}}$. The intersection of two different elements is either empty, or a vertex, or an entire edge of both elements. $\mathcal{T}_h$ is locally quasi-uniform, i.e., each element contains a ball of radius $c_1h$ and is contained in a circle of radius $c_2h$, where the constants $c_1>0$ and $c_2<\infty$ do not depend on $K$ or $h$. 
For each element $K\in\mathcal{T}_h$, let $\mathcal{P}^q(K)$ be the set of all polynomials of degree less than or equal to $q$. 

Now we define the finite dimensional space $V_h^0\subset H_0^{1,2}$ consisting of the continuous piecewise linear functions on $\mathcal{T}_h$ by
\begin{center} 
$V_h^0=\{\phi:\phi|_K\in\mathcal{P}^1(K)\:\forall\:K\in\mathcal{T}_h, \phi$ is continuous on $D$ and $\phi=0$ on $\ptl D\}$. 
\end{center} 
Let $\{N_1,\cdots,N_L\}$ be an enumeration of internal nodes of $\mathcal{T}_h$ and the space $V_h^0$ is spanned by the set of nodal basis functions $\{\phi_h^1,\cdots,\phi_h^L\}$. 
By $\Pi_h:L^2\rightarrow V_h^0$, we denote the $L^2$-projection of a given function $\xi$ onto finite dimensional space, i.e., $\langle \Pi_h\xi-\xi,\phi_h\rangle=0$ for all $\phi_h \in V_h^0$. The Ritz projection $\cR_h:H^{1,2}_0\rightarrow V_h^0$ is defined by $\langle \nabla[\cR_h\xi-\xi],\nabla\phi_h\rangle=0$ for all $\phi_h \in V_h^0$. Discrete Laplace operator $\Pi_h\Delta\equiv \Delta_h:V_h^0\rightarrow V_h^0$ is given by $-\langle \Delta_h\xi_h,\phi_h\rangle=\langle\nabla\xi_h,\nabla\phi_h\rangle, \; \forall \:\phi_h,\xi_h\in V_h^0$.\\
Following are some standard results about the stability of $L^2$-projection onto finite element spaces and the associated approximation error estimates; refer to \cite{egger2015ErrEstFE} for instance.


\begin{thm}\label{FEThm}
	In the following assertions, the constant $C_e$ only depends on the domain and the regularity constants of the mesh but does not depend on $h$:
	\begin{enumerate}
		\item [(a)] The $L^2$-projection is stable on $L^2$, i.e., for all $\xi\in L^2$ we have
		$\|\Pi_h\xi\|\le\|\xi\|$. For locally quasi-uniform mesh, the $L^2$-projection is also stable on $H^{1,2}_0$, i.e., for all $\xi\in H^{1,2}_0$ we have 
		$\|\Pi_h\xi\|_{1,2}\le C_e\|\xi\|_{1,2}$. 
		\item [(b)] The $L^2$-projection of a function into finite element space $V^0_h$ is the best approximation in $V_h^0$, i.e., $\|\xi-\Pi_h\xi\|\le\|\xi-\phi_h\|$ for all $\phi_h\in V_h^0$.
		\item [(c)] The error estimates for $L^2$-projection are given as follow: 
		\begin{equation*}
		\begin{split}
		\|\xi-\Pi_h\xi\|&\le C_eh\|\xi\|_{1,2},\quad \forall\, \xi\in H^{1,2}_0; \\
		\|\xi-\Pi_h\xi\|&\le C_eh^2\|\xi\|_{2,2},\quad \forall\, \xi\in H^{2,2}_0;\\
		\|\xi-\Pi_h\xi\|_{1,2}&\le C_eh\|\xi\|_{2,2},\quad \forall\, \xi\in H^{2,2}_0.
		\end{split}
		\end{equation*}
	\end{enumerate}
\end{thm}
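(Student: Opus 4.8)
The plan is to reduce all four assertions to the $L^2$-orthogonal-projection structure of $\Pi_h$, together with the classical inverse inequality on locally quasi-uniform meshes and the approximation properties of a quasi-interpolation operator; all these ingredients are standard and collected in \cite{egger2015ErrEstFE}. First I would record that, by definition, $\Pi_h\xi$ is the $L^2$-orthogonal projection of $\xi$ onto $V_h^0$, i.e. $\langle\xi-\Pi_h\xi,\phi_h\rangle=0$ for every $\phi_h\in V_h^0$. Testing with $\phi_h=\Pi_h\xi$ and Cauchy--Schwarz give $\|\Pi_h\xi\|^2=\langle\xi,\Pi_h\xi\rangle\le\|\xi\|\,\|\Pi_h\xi\|$, which is the $L^2$-stability in (a); and since $\xi-\phi_h=(\xi-\Pi_h\xi)+(\Pi_h\xi-\phi_h)$ is an $L^2$-orthogonal decomposition, $\|\xi-\phi_h\|^2=\|\xi-\Pi_h\xi\|^2+\|\Pi_h\xi-\phi_h\|^2\ge\|\xi-\Pi_h\xi\|^2$, which is (b). The first two estimates in (c) then follow at once by choosing, in (b), $\phi_h$ to be a quasi-interpolant $I_h\xi\in V_h^0$ (of Scott--Zhang or Cl\'ement type) obeying $\|\xi-I_h\xi\|\le C_e h\|\xi\|_{1,2}$ for $\xi\in H^{1,2}_0$ and $\|\xi-I_h\xi\|\le C_e h^2\|\xi\|_{2,2}$ for $\xi\in H^{2,2}_0$.

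For the $H^{1,2}$-stability in (a) I would invoke the inverse inequality $\|\nabla\phi_h\|\le C_e h^{-1}\|\phi_h\|$ for $\phi_h\in V_h^0$ (the only place local quasi-uniformity is used) and compare $\Pi_h\xi$ with the Ritz projection $\cR_h\xi$. Since $\cR_h\xi\in V_h^0$ we have $\Pi_h\cR_h\xi=\cR_h\xi$, hence $\Pi_h\xi-\cR_h\xi=\Pi_h(\xi-\cR_h\xi)$, so that
\[
\|\nabla\Pi_h\xi\|\le\|\nabla\cR_h\xi\|+\|\nabla\Pi_h(\xi-\cR_h\xi)\|\le\|\nabla\xi\|+C_e h^{-1}\|\Pi_h(\xi-\cR_h\xi)\|\le\|\nabla\xi\|+C_e h^{-1}\|\xi-\cR_h\xi\|,
\]
using that $\cR_h$ is the $\nabla$-orthogonal projection (so $\|\nabla\cR_h\xi\|\le\|\nabla\xi\|$) and the $L^2$-stability of $\Pi_h$; it then remains to insert the Ritz error bound $\|\xi-\cR_h\xi\|\le C_e h\|\nabla\xi\|$, valid for $\xi\in H^{1,2}_0$, to conclude $\|\Pi_h\xi\|_{1,2}\le C_e\|\xi\|_{1,2}$. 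The last estimate in (c) uses the same idea: split $\xi-\Pi_h\xi=(\xi-I_h\xi)+(I_h\xi-\Pi_h\xi)$; interpolation gives $\|\nabla(\xi-I_h\xi)\|\le C_e h\|\xi\|_{2,2}$, while the inverse inequality, the triangle inequality and the $L^2$-estimate of (c) give $\|\nabla(I_h\xi-\Pi_h\xi)\|\le C_e h^{-1}\|I_h\xi-\Pi_h\xi\|\le C_e h^{-1}(\|\xi-I_h\xi\|+\|\xi-\Pi_h\xi\|)\le C_e h^{-1}\cdot C_e h^2\|\xi\|_{2,2}$, and adding the two bounds completes it.

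The one genuinely non-routine ingredient --- and the step I expect to cost the most care --- is the Ritz error bound $\|\xi-\cR_h\xi\|\le C_e h\|\nabla\xi\|$ invoked above. I would establish it by an Aubin--Nitsche duality argument: let $z\in H^{2,2}_0$ solve $-\Delta z=\xi-\cR_h\xi$ in $D$ with $\|z\|_{2,2}\le C\|\xi-\cR_h\xi\|$ (this is where $H^{2,2}$-elliptic regularity on the convex polyhedral domain $D$ enters); then, integrating by parts and using the Galerkin orthogonality of $\cR_h$,
\[
\|\xi-\cR_h\xi\|^2=\langle\nabla(\xi-\cR_h\xi),\nabla(z-\cR_h z)\rangle\le\|\nabla(\xi-\cR_h\xi)\|\,\|\nabla(z-\cR_h z)\|\le\|\nabla\xi\|\cdot C_e h\|z\|_{2,2},
\]
where the last step uses $\|\nabla(\xi-\cR_h\xi)\|\le\|\nabla\xi\|$ and the $H^{1,2}$-approximation estimate for $\cR_h$ (itself a consequence of C\'ea's lemma and interpolation); dividing by $\|\xi-\cR_h\xi\|$ finishes the bound. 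All remaining manipulations are elementary, and the domain- and mesh-regularity constants feed in only through this duality step and the inverse inequality.
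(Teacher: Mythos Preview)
Your proof is correct and follows the standard route via orthogonality, quasi-interpolation, the inverse inequality, and Aubin--Nitsche duality. However, the paper does not actually prove this theorem: it is stated as a collection of standard results on the $L^2$-projection and simply refers to \cite{egger2015ErrEstFE} for details, with no argument given. So there is no proof in the paper to compare against; you have supplied what the authors chose to cite rather than reprove.
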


\subsection{Deep Neural Networks}
Deep learning provides a very powerful framework for high dimensional function approximation.  In what follows, we shall introduce the architecture of deep neural networks and associated universal approximation results.

Consider a deep neural network with input dimension $d_i$, output dimension $d_o$, number of layers $N+1\in\mathbb{N}\diagdown\{1,2\}$, and number of neurons $m_n,n=0,\cdots,N$ on each layer. Here, $m_0=d_i$, $m_N=d_o$ and for simplicity we choose an identical number of neurons for all hidden layers, that is, $m_n=m,n=1,\cdots,N-1$. Then a feed-forward neural network may be thought of as a function from $\bR^{d_i}$ to $\bR^{d_o}$ defined by compositions of simple functions as
\begin{equation}\label{nnfunc}
x\in\bR^{d_i}\mapsto A_N\;o\;\alpha\;o\;A_{N-1}\;o\cdots o\;\alpha\;o\;A_1(x)\in\bR^{d_o},
\end{equation}
where $f_1of_2(x)=f_1(f_2(x))$.
Here, $A_1:\bR^{d_i}\mapsto\bR^m,A_N:\bR^{m}\mapsto\bR^{d_o}$, and $A_n:\bR^{m}\mapsto\bR^m,n=2,\cdots,N-1$ are affine transformations that take place inside a whole layer and defined by
\begin{equation*}
A_n(x)=\mathcal{W}_nx+\beta_n. 
\end{equation*}
Here matrix $\mathcal{W}_n$ and vector $\beta_n$ are called weight and bias respectively for the $n$th layer of the network. For the last layer we choose the identity function as activation function, and the activation function $\alpha$ here applied component-wise on the outputs of $A_n$.

We denote by $\theta=(\mathcal{W}_n,\beta_n)_{n=1}^N$ the parameters of neural network. Given $d_i,d_o,N$ and $m$, the total number of parameters in a network is $N_{\theta}=\sum_{n=0}^{N-1}(m_n+1)m_{n+1}=(d_0+1)m+(m+1)m(N-2)+(m+1)d_1$ and thus $\theta\in\bR^{N_{\theta}}$. Let $\Theta$ be the set of all possible values of $\theta$ and if there are no constraint on parameters then $\Theta=\bR^{N_{\theta}}$. By $\mathcal{X}^{\mathcal{N}}(\cdot;\theta)$ we denote the neural network function defined in (\ref{nnfunc}) and the set of all such neural networks $\mathcal{X}^{\mathcal{N}}(\cdot;\theta),\theta\in\Theta$ within a fixed structure determined by $d_i,d_o,N,m$ and $\alpha$ is denoted by $\mathcal{NN}_{d_i,d_o,N,m}^{\alpha}(\Theta)$. 

Deep neural networks are very efficient for approximations of functions even in high-dimensional spaces. The following fundamental result is  from \cite{hornik1989multilayer,hornik1990universal}:
\begin{thm}[Universal Approximation Therorem] If $\alpha$ is continuous and non-constant, it holds that:
\begin{enumerate}
	\item[(i)] The set $\cup_{m\in\mathbb{N}}\mathcal{NN}_{d_i,d_o,N,m}^{\alpha}(\Theta)(\bR^{N_{\theta}})$ is dense in $L^2(\nu)$ for any finite measure $\nu$ on $\bR^{d_i}$.
	\item[(ii)] If we further have $\alpha\in C^k$, then $\cup_{m\in\mathbb{N}}\mathcal{NN}_{d_0,d_1,2,m}^{\alpha}(\bR^{2_{\theta}})$ approximate any function and its derivatives up to order $k$, arbitrary well on any compact set of $\bR^{d_i}$.
\end{enumerate}
\end{thm}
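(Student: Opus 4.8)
The plan is to follow the classical argument of \cite{hornik1989multilayer,hornik1990universal}. In both parts it suffices to treat the single-hidden-layer case (deeper architectures in \eqref{nnfunc} reproduce, or approximate, shallower ones, since the last layer is affine and extra $\alpha$-layers can be made to act near-affinely) and to prove that the linear span of ridge functions
\[
\cM_\alpha:=\mathrm{span}\big\{x\mapsto\alpha(\langle w,x\rangle+b):\ w\in\bR^{d_i},\ b\in\bR\big\}
\]
is dense in the relevant space. For (i), I would first reduce to uniform approximation on compact sets: given a finite Borel measure $\nu$ on $\bR^{d_i}$ and $\phi\in L^2(\nu)$, inner regularity of $\nu$ together with the density of $C_b(\bR^{d_i})$ in $L^2(\nu)$ produces, for any $\eps>0$, a compact set $K$ and $g\in C_b(\bR^{d_i})$ with $\|\phi-g\|_{L^2(\nu)}<\eps$ and $\nu(\bR^{d_i}\setminus K)<\eps$; it therefore suffices to show that $\cM_\alpha$ is dense in $(C(K),\|\cdot\|_\infty)$.

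The core of the proof is this density statement, which I would establish constructively. First, mollifying $\alpha$ by convolution against a compactly supported smooth approximate identity yields a function $h\in C^\infty(\bR)$ that is still non-constant and, crucially, not a polynomial (this is the point at which the hypothesis on $\alpha$ enters) and such that $x\mapsto h(\langle w,x\rangle+b)$ is a uniform limit on $K$ of elements of $\cM_\alpha$. Next, a non-polynomial $h\in C^\infty(\bR)$ admits a base point $b_0$ with $h^{(m)}(b_0)\neq0$ for every $m\ge0$ (a short Baire-category argument applied to the closed sets $\{t:h^{(m)}(t)=0\}$). Differentiating $\lambda\mapsto h(\lambda\langle w,x\rangle+b_0)$ at $\lambda=0$ and realising these derivatives as $C(K)$-limits of difference quotients of ridge functions shows that $x\mapsto\langle w,x\rangle^m$ lies in the closure of $\cM_\alpha$ for all $w$ and $m$; by polarization, powers of linear forms span all polynomials on $K$, and the Weierstrass approximation theorem then gives $\overline{\cM_\alpha}=C(K)$.

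For (ii) I would carry out the same scheme in the topology of $C^k(K)$, which is legitimate precisely because $\alpha\in C^k$: the Riemann-sum approximation of the mollification and the weight-difference quotients then converge together with all partial derivatives up to order $k$, so the polynomials---and hence, by the Weierstrass theorem in $C^k(K)$, all of $C^k(K)$---are approximated in $C^k(K)$, i.e.\ a function and its derivatives up to order $k$ are approximated simultaneously and uniformly on compacta. I expect the main obstacle to be the middle step: keeping the mollified activation inside $\overline{\cM_\alpha}$ while extracting from ``$\alpha$ is not a polynomial'' both the good base point $b_0$ and the uniform (respectively $C^k$) convergence of the weight-derivative difference quotients. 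Everything else---the measure-theoretic reduction of step one and the invocations of Weierstrass and polarization---is routine.
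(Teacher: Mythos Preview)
The paper does not prove this theorem at all: it simply records the statement as ``the following fundamental result is from \cite{hornik1989multilayer,hornik1990universal}'' and moves on. So there is no proof in the paper to compare your proposal against; any self-contained argument you supply is already more than what the paper offers.

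Your sketch follows the Leshno--Lin--Pinkus--Schocken line (mollify the activation, find a point where all derivatives are nonzero, extract monomials via difference quotients, then invoke Weierstrass) rather than Hornik's original Stone--Weierstrass/Fourier-analytic arguments, but that is a perfectly legitimate and arguably cleaner route. One genuine gap, however: your argument hinges on the mollified activation $h$ being \emph{non-polynomial}, and you write that this ``is the point at which the hypothesis on $\alpha$ enters.'' But the stated hypothesis is only that $\alpha$ is continuous and non-constant, which does \emph{not} exclude polynomials---and for polynomial $\alpha$ the span $\cM_\alpha$ consists only of polynomials of bounded degree, so density fails. In other words, the theorem as stated in the paper is imprecise (Hornik's actual hypotheses are stronger: sigmoidal/squashing in 1989, bounded non-constant in later work), and your proof tacitly imports the correct Leshno-et-al.\ hypothesis ``$\alpha$ is not a polynomial'' without deriving it from the given one. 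You should either flag this discrepancy explicitly or add the non-polynomial assumption to the statement you are proving.
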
 

\section{Wellposedness of FBSPDEs}
The wellposedness of FBSPDE \eqref{FSPDE}-\eqref{BSPDE} will be addressed in this section, before which we first introduce the definitions of weak and strong solutions.
\begin{defn}\label{defnFsoln}
	Given $(u,\psi)\in \bigg(\cL^2(H^{1,2}_0)\cap\cS^2(L^2)\bigg)\times\cL^2(L^2)$, the random function $ \rho\in \cL^2(H^{1,2}_0)\cap\cS^2(L^2)$ is said to be a \textit{weak} solution to FSPDE \eqref{FSPDE} if
	for each $\phi \in C^{\infty}_{c}$, the equality
	\begin{equation}
	\begin{split}
	\langle \rho(t),\phi\rangle=\la\rho_0,\phi\ra&+\int_{0}^{t}\bigg(-\langle\nabla \rho(t),\nabla\phi\rangle+\langle F(t,x,\rho,\nabla\rho,u,\nabla u,\psi),\phi\rangle\bigg)\,
	\mathrm{d}t\\
	&-\int_{0}^{t}\sum_{i=1}^{k}\langle f^i( t,\rho(t),u(t)),\phi\rangle \mathrm{d}W^i(t)
	\end{split}
	\end{equation}
	holds for all $t\in[0,T]$ with probability 1. If we further have 
	$\rho\in \cL^2(H^{2,2}_0)\cap\cS^2(H^{1,2}_0)$ for given $ (u,\psi)\in \bigg(\cL^2(H^{2,2}_0)\cap\cS^2(H^{1,2}_0)\bigg)\times\cL^2(H^{1,2}_0)$, then the solution $\rho$ is called a \textit{strong} solution.
\end{defn}
\begin{defn}\label{defnBsoln}
	Given $ \rho\in \cL^2(H^{1,2}_0)\cap\cS^2(L^2)$, the pair $ (u,\psi)\in \bigg(\cL^2(H^{1,2}_0)\cap\cS^2(L^2)\bigg)\times\cL^2(L^2)$ is called a \textit{weak} solution to BSPDE \eqref{BSPDE} if for each $\phi \in C^{\infty}_{c}$, the equality
	\begin{equation}
	\begin{split}
	\langle u(t),\phi\rangle=\la g,\phi\ra&+\int_{t}^{T}\bigg(-\langle\nabla u(t),\nabla\phi\rangle+\langle G(t,x,\rho,\nabla\rho,u,\nabla u,\psi),\phi\rangle\bigg)\,\mathrm{d}t\\
	&-\int_{t}^{T}\sum_{i=1}^{k}\langle\psi^i(t) ,\phi\rangle \mathrm{d}W^i(t)
	\end{split}
	\end{equation}
	holds for all $t\in[0,T]$ with probability 1. If we further have $ (u,\psi)\in (\cL^2(H^{2,2}_0)\cap\cS^2(H^{1,2}_0))\times\cL^2(H^{1,2}_0)$  for given $\rho\in \cL^2(H^{2,2}_0)\cap\cS^2(H^{1,2}_0)$, the solution pair $(u,\psi)$ is called a \textit{strong} solution.
\end{defn}
\begin{defn}\label{defnFBsystem}
	The tuple $ (\rho, u,\psi)\in \bigg(\cL^2(H^{1,2}_0)\cap\cS^2(L^2)\bigg)\times\bigg(\cL^2(H^{1,2}_0)\cap\cS^2(L^2)\bigg)\times\cL^2(L^2)$ is called a \textit{weak} solution to FBSPDE \eqref{FSPDE}-\eqref{BSPDE} if
	for each $\phi \in C^{\infty}_{c},$ the equalities
	\begin{equation}
	\begin{split}
	\langle \rho(t),\phi\rangle=\la\rho_0,\phi\ra&+\int_{0}^{t}\bigg(-\langle\nabla \rho(t),\nabla\phi\rangle+\langle F(t,x,\rho,\nabla\rho,u,\nabla u,\psi),\phi\rangle\bigg)\,
	\mathrm{d}t\\
	&-\int_{0}^{t}\sum_{i=1}^{k}\langle f^i( t,\rho(t),u(t)),\phi\rangle \mathrm{d}W^i(t),
	\end{split}
	\end{equation}
	and
	\begin{equation}
	\begin{split}
	\langle u(t),\phi\rangle=\la g,\phi\ra&+\int_{t}^{T}\bigg(-\langle\nabla u(t),\nabla\phi\rangle+\langle G(t,x,\rho,\nabla\rho,u,\nabla u,\psi),\phi\rangle\bigg)\,\mathrm{d}t\\
	&-\int_{t}^{T}\sum_{i=1}^{k}\langle\psi^i(t) ,\phi\rangle \mathrm{d}W^i(t),
	\end{split}
	\end{equation}
	hold for all $t\in[0,T]$ with probability 1. If we further have $ (\rho,u,\psi)\in \bigg(\cL^2(H^{2,2}_0)\cap\cS^2(H^{1,2}_0)\bigg)\times\bigg(\cL^2(H^{2,2}_0)\cap\cS^2(H^{1,2}_0)\bigg)\times\cL^2(H^{1,2}_0)$, the solution tuple $(\rho,u,\psi)$ is called a \textit{strong} solution.
\end{defn}

\begin{thm}\label{Thm1}
	Let assumptions \ref{ass1}-\ref{ass2} hold. Then there exists $\bar{C}=\bar{C}(L_1^f,L_1^F,L^G_2)$ such that if 
	\begin{align}
\bar{C}e^{\bar{C}T}\cdot\max{\{|{L_1^G}|^2T^2+|{L^g}|^2,|{L_1^G}|^2T\}}\cdot T\cdot\max \{|L_2^F|^2,|L_2^F|^2T+|L_2^f|^2\}<1, \label{condtn-wellp}
	\end{align}
	 FBSPDE \eqref{FSPDE}-\eqref{BSPDE} admits a unique weak solution $(\rho,u,\psi)$, with 
	\begin{equation*}
	\begin{split}
	&\bE\bigg[\sup_{t\in [0,T]}\|\rho(t)\|^2+\sup_{t\in [0,T]}\|u(t)\|^2\bigg]+\bE\int_{0}^{T}\left(\|\nabla\rho(s)\|^2+\|\nabla u(s)\|^2+\|\psi(s)\|^2\right)\rd s\\
	&\le C\bE\left[ \|\rho_0\|^2+\|g^0\|^2+\int_{0}^{T}\|F^0_s\|^2\rd s+
	\int_{0}^{T}\|f^{0}_s\|^2\rd s+ \int_{0}^{T}\|G^0_s\|^2\rd s\right],
	\end{split}
	\end{equation*}
	where the constant $C=C(L_1^f,L_2^f,L^g,L^F_1,L^F_2,L_1^G,L_2^G,T)$. Moreover, this weak solution is a strong one satisfying
	\begin{equation*}
	\begin{split}
	&\bE\bigg[\sup_{t\in [0,T]}\|\nabla\rho(t)\|^2+\sup_{t\in [0,T]}\|\nabla u(t)\|^2\bigg]+\bE\int_{0}^{T}\left(\|\Delta\rho(s)\|^2+\|\Delta u(s)\|^2+ \|\psi(s)\|_{1,2}^2\right)\rd s\\
	&\le C\bE\left[ 1+\|\rho_0\|^2_{1,2}+\| g^0\|^2+\int_{0}^{T}\|F^0_s\|^2\rd s+\int_{0}^{T}\| f^{0}_s\|^2\rd s+ \int_{0}^{T}\|G^0_s\|^2\rd s\right],
	\end{split}
	\end{equation*}
	with constant $C=C(L_1^f,L_2^f,\tilde{L}^f,L^g,\tilde{L}^g,L^F_1,L^F_2,L^G_1,L^G_2,T)$.
\end{thm}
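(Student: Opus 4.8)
The plan is a contraction-mapping argument on the forward component, built on the decoupled well-posedness results, followed by a bootstrap from the weak to the strong estimates.

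\textbf{Building blocks.} First I would record the well-posedness and energy estimates for each equation in the \emph{decoupled} regime (essentially the content of Lemma~\ref{lem1}): for frozen $(u,\nabla u,\psi)\in(\cL^2(H^{1,2}_0)\cap\cS^2(L^2))\times\cL^2(L^2)$ the forward SPDE \eqref{FSPDE} has a unique weak solution $\rho\in\cL^2(H^{1,2}_0)\cap\cS^2(L^2)$ --- via Galerkin approximation on the Gelfand triple $H^{1,2}_0\hookrightarrow L^2\hookrightarrow H^{-1,2}$ together with It\^o's formula --- with $\bE\sup_t\|\rho(t)\|^2+\bE\int_0^T\|\nabla\rho\|^2$ bounded by $C\bE[\|\rho_0\|^2+\int_0^T(\|F^0_s\|^2+\|u\|^2+\|\nabla u\|^2+\|\psi\|^2+\|f^0_s\|^2)]$; and, symmetrically, for frozen $\rho\in\cL^2(H^{1,2}_0)\cap\cS^2(L^2)$ with $\rho(T)\in L^2(\Omega;\sF_T,H^{1,2}_0)$, the backward SPDE \eqref{BSPDE} has a unique weak solution $(u,\psi)$ with $\bE\sup_t\|u(t)\|^2+\bE\int_0^T(\|\nabla u\|^2+\|\psi\|^2)$ bounded by $C\bE[\|g(\cdot,\rho(T))\|^2+\int_0^T(\|G^0_s\|^2+\|\rho\|^2+\|\nabla\rho\|^2)]$. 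The nonlocal dependence of $F,G,g$ is harmless here, since Assumptions~\ref{ass1}--\ref{ass2} are stated via $L^2$-norms of the whole random fields.

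\textbf{The contraction and the weak estimate.} On the Banach space $\fX:=\cL^2(H^{1,2}_0)\cap\cS^2(L^2)$ define $\Phi$ by: given $\rho$, solve \eqref{BSPDE} for $(u^\rho,\psi^\rho)$, then solve \eqref{FSPDE} with data $(u^\rho,\nabla u^\rho,\psi^\rho)$ for $\Phi(\rho)$; a fixed point of $\Phi$ is exactly a weak solution of \eqref{FSPDE}--\eqref{BSPDE}. For $\rho^1,\rho^2\in\fX$ write $\delta\rho=\rho^1-\rho^2$, $(\delta u,\delta\psi)$ for the BSPDE differences, $\delta\tilde\rho=\Phi(\rho^1)-\Phi(\rho^2)$. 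Applying It\^o's formula to $\|\delta u(t)\|^2$, absorbing the self-Lipschitz terms of $G$ (constant $L_2^G$) by Gronwall, and using Assumption~\ref{ass2}, I get
\[
\bE\sup_t\|\delta u(t)\|^2+\bE\int_0^T\bigl(\|\nabla\delta u\|^2+\|\delta\psi\|^2\bigr)\le \bar C e^{\bar C T}\max\{|L_1^G|^2T^2+|L^g|^2,\,|L_1^G|^2T\}\,\|\delta\rho\|_{\fX}^2,
\]
the maximum recording whether the dominant contribution enters through the terminal value $g(\cdot,\rho(T))$ or the drift $G$. Applying It\^o's formula to $\|\delta\tilde\rho(t)\|^2$, absorbing the self-Lipschitz terms of $F$ and $f$ (constants $L_1^F,L_1^f$) and using Assumption~\ref{ass1},
\[
\|\delta\tilde\rho\|_{\fX}^2\le \bar C e^{\bar C T}\,T\max\{|L_2^F|^2,\,|L_2^F|^2T+|L_2^f|^2\}\,\Bigl(\bE\sup_t\|\delta u\|^2+\bE\int_0^T(\|\nabla\delta u\|^2+\|\delta\psi\|^2)\Bigr).
\]
Composing the two (enlarging $\bar C$ to absorb the square of the Gronwall factor) yields $\|\delta\tilde\rho\|_{\fX}^2\le\kappa\,\|\delta\rho\|_{\fX}^2$ with $\kappa$ equal to the left-hand side of \eqref{condtn-wellp}; so $\Phi$ is a contraction and Banach's fixed-point theorem produces the unique weak solution $(\rho,u,\psi)$. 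Feeding it back into the two decoupled estimates, bounding $\|g(\cdot,\rho(T))\|\le L^g\|\rho(T)\|+\|g^0\|$, and invoking \eqref{condtn-wellp} once more to absorb $\bE\sup_t\|\rho\|^2$ and $\bE\int_0^T(\|\nabla\rho\|^2+\|u\|^2+\|\nabla u\|^2+\|\psi\|^2)$ into the left-hand side gives the stated weak energy estimate.

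\textbf{From weak to strong.} Now I would upgrade the regularity of the solution just constructed. Since $\rho_0\in H^{1,2}_0$ and $f(t,\cdot,\cdot)$ maps into $H^{1,2}_0$ with the linear growth of Assumption~\ref{ass1}(b), It\^o's formula applied to $\|\nabla\rho(t)\|^2$ --- using $\langle\nabla\Delta\rho,\nabla\rho\rangle=-\|\Delta\rho\|^2$, legitimate because the Galerkin approximants lie in $H^{2,2}_0$ --- together with the Burkholder--Davis--Gundy inequality to absorb $\bE\sup_t\|\nabla\rho\|^2$, gives
\[
\bE\sup_t\|\nabla\rho(t)\|^2+\bE\int_0^T\|\Delta\rho\|^2\le C\,\bE\Bigl[\|\rho_0\|_{1,2}^2+\int_0^T\|F\|^2+\int_0^T|\tilde L^f|^2\bigl(1+\|\rho\|_{1,2}+\|u\|_{1,2}\bigr)^2\Bigr],
\]
whose right-hand side is finite and controlled by the claimed data, because $\int_0^T\|F\|^2$, $\int_0^T\|\rho\|_{1,2}^2$ and $\int_0^T\|u\|_{1,2}^2$ are already dominated by the weak energy estimate --- so no extra smallness is needed here. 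In particular $\bE\|\rho(T)\|_{1,2}^2<\infty$, hence $g(\cdot,\rho(T))\in L^2(\Omega;\sF_T,H^{1,2}_0)$ by Assumption~\ref{ass2}(b), and the parallel computation (It\^o's formula for $\|\nabla u(t)\|^2$ backward, plus Burkholder--Davis--Gundy) upgrades $(u,\psi)$ to a strong solution obeying the stated bound; by uniqueness of the weak solution, this strong solution is $(\rho,u,\psi)$. I expect the main obstacle to be the bookkeeping in the contraction step --- isolating exactly which Lipschitz constants must be small (the ``off-diagonal'' couplings $L_2^F,L_2^f,L_1^G,L^g$) from those absorbable into the Gronwall factor $\bar C e^{\bar C T}$ (the ``diagonal'' self-interactions $L_1^f,L_1^F,L_2^G$), so that the resulting condition is precisely \eqref{condtn-wellp} --- together with the careful justification of It\^o's formula for the $L^2$- and $H^{1,2}_0$-norms of solutions to the backward SPDE in the variational framework, where $\psi$ must be handled through the approximation scheme rather than pointwise.
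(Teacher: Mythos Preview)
Your proposal is correct and follows essentially the same contraction-mapping-plus-energy-estimate strategy as the paper. Two cosmetic differences are worth noting: you iterate on the forward component $\rho$ (solve BSPDE, then FSPDE) while the paper iterates on the pair $(u,\psi)$ (solve FSPDE, then BSPDE) --- since the two Lipschitz bounds compose to the same product, both choices yield the identical smallness condition \eqref{condtn-wellp}; and for the strong-solution upgrade you carry out the It\^o/BDG computation on $\|\nabla\rho\|^2$ and $\|\nabla u\|^2$ directly, whereas the paper simply invokes the external $H^{2,2}$-regularity results of \cite{KaiDu2020} and \cite{du2011strong} once the weak solution and its $\cL^2(H^{1,2}_0)$ bound are in hand.
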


\begin{rmk}\label{rmk-wellposedness-FBSPDE}
Recalling that in Assumptions \ref{ass1}-\ref{ass2}, the Lipschitz constant $(L_2^F,L_2^f)$ (resp. the pair $(L_1^G,L^g)$) characterizes the dependence of the forward equation \eqref{FSPDE} (resp. backward equation \eqref{BSPDE}) on the solution of backward equation \eqref{BSPDE} (resp. forward equation \eqref{FSPDE}), we may see that when either $(L_2^F,L_2^f)$ or $(L_1^G,L^g)$ takes sufficiently small value, the extent of coupling can be thought of to be weak and condition \eqref{condtn-wellp} guarantees the wellposedness of FBSPDE  \eqref{FSPDE}-\eqref{BSPDE}. Such an assertion/observation does not exist in \cite{HongYin2014FBSPDE}, because therein, the Lipschitz constants on solutions of forward and backward equations are not separated out as in Assumptions \ref{ass1}-\ref{ass2} and the function spaces for the solutions  $\rho$ and $u$ and the associated computations are different from ours. To discuss the numerical approximations, we focus Theorem \ref{Thm1} on the wellposedness of both weak and strong solution, while in \cite{HongYin2014FBSPDE}, only the weak solution is concerned but for a more general class of FBSPDEs without any numerical discussions. Some numerical methods and wellposeness of solutions in Sobolev spaces for coupled FBSPDEs may be found in \cite{dunst2016forward}, but the FBSPDEs therein are restricted to linear ones with coefficients of the following form:
\begin{align*}
F\big(t,x,\rho(t),\nabla\rho(t),u(t),\nabla u(t),\psi(t)\big) &=  u(t), \quad 
f^i(t,x,\rho(t),u(t))=-\nu^i (t)\rho(t),\quad i=1,\dots, k,\\
G\big(t,x,\rho(t),\nabla\rho(t),u(t),\nabla u(t),\psi(t)\big) &= \sum_{i=1}^k \nu^i(t)\psi^i(t) + h(t),\quad
G(x,\rho(T))= \Psi(x).
\end{align*}
We would note that neither of the papers \cite{HongYin2014FBSPDE,dunst2016forward} incorporate the nonlocal dependence as stated in Remark \ref{rmk-nonlocal}.

In addition, the wellposedness  in H\"older spaces is addressed in \cite{CardaliaguetetelMFG2015} for a class of FBSPDEs with linear coefficient $f$ and periodic boundary conditions under certain strong assumptions; no numerical approximation is discussed and the readers may refer to Remark \ref{rmk-nonlocal} for more comparisons. 
\end{rmk}
The proof below will be divided into two steps; the first time reader may skip the detailed proof to enjoy the numerical analysis in the next sections.

\begin{proof}[Proof of Theorem \ref{Thm1}.]
	For each $u'_1,u'_2\in \cL^2(H^{1,2}_0)\cap\cS^2(L^2)$ and ${\psi'_1},{\psi'_2}\in\cL^2(L^2)$, the standard SPDE theory (see \cite{PR07} for instance) indicates that there are unique solutions, denoted by $\rho_1$ and $\rho_2$ respectively,  to the following SPDEs
	\begin{equation}\label{FWcontrc1}
	\left\{\begin{array}{l}
	\begin{split}
	\rd\rho_1(t,x)&=\bigg(\Delta \rho_1(t,x)+F(t,x,\rho_1(t),\nabla\rho_1(t),u'_1(t),\nabla u'_1(t),{\psi'_1}(t))\bigg)\,\rd t\\
	&\quad
	-\sum_{i=1}^{k} f^i(t,x,\rho_1(t),u'_1(t)) \,\rd W_{t}^{i},\quad (t,x)\in[0,T]\times D;\\
	\rho_1(0,x)&=\rho_0(x), \quad x\in D;\\
	\rho_1(t,x)\bigg|_{\partial D}&=0,\quad t\in[0,T],
	\end{split}
	\end{array}\right.
	\end{equation}
	and
	\begin{equation}
	\left\{\begin{array}{l}
	\begin{split}
	\rd\rho_2(t,x) &
		=\bigg(\Delta \rho_2(t,x)+F(t,x,\rho_2(t),\nabla\rho_2(t),u'_2(t),\nabla u'_2(t),{\psi'_2}(t))\bigg)\,\rd t\\
		&\quad
		-\sum_{i=1}^{k} f^i(t,x,\rho_2(t),u'_2(t)) \,\rd W_{t}^{i}, 
	\quad (t,x)\in[0,T]\times D;\\
	\rho_2(0,x) &
		=\rho_0(x), \quad x\in D;\\
	\rho_2(t,x)\bigg|_{\partial D}&
		=0,\quad t\in[0,T].
	\end{split}
	\end{array}\right.
	\end{equation}
	Meanwhile, by the theory of BSPDEs (see \cite{marquez2004some} for instance),  there are unique weak solutions, denoted by $(u_1,\psi_1)$ and $(u_2,\psi_2)$ respectively,  to the following backward SPDEs:
	\begin{equation}\label{BWcontrc1}
	\left\{\begin{array}{l}
	\begin{split}
	-\rd u_1(t,x)&
		=\bigg(\Delta u_1(t,x)+G(t,x,\rho_1(t),\nabla\rho_1(t),u_1(t),\nabla u_1(t),\psi_1(t))\bigg)\,\rd t\\
		&\quad
			 -\sum_{i=1}^{k}\psi_1^i (t,x)\,\rd W_{t}^{i}, \quad
	(t,x)\in[0,T]\times D;\\
	u_1(T,x)&
		=g(x,\rho_1(T)), \quad x\in D;\\
	u_1(t,x)\bigg|_{\partial D}&
		=0,\quad t\in[0,T],
	\end{split}
	\end{array}\right.
	\end{equation}
	and
	\begin{equation}
	\left\{\begin{array}{l}
	\begin{split}
	-\rd u_2(t,x)&
		=\bigg(\Delta u_2(t,x)+G(t,x,\rho_2(t),\nabla\rho_2(t),u_2(t),\nabla u_2(t),\psi_2(t))\bigg)\,\rd t\\
		&\quad
			 -\sum_{i=1}^{k}\psi_2^i (t,x)\,\rd W_{t}^{i},\quad
	(t,x)\in[0,T]\times D;\\
	u_2(T,x)&
		=g(x,\rho_2(T)), \quad x\in D;\\
	u_2(t,x)\bigg|_{\partial D}&
		=0,\quad t\in[0,T].
	\end{split}
	\end{array}\right.
	\end{equation}
	We shall use the contraction mapping methods to prove the existence and uniqueness of weak and strong solution to FBSPDE  \eqref{FSPDE}-\eqref{BSPDE}  and the involved computations will be divided into two parts.
	
	\textbf{Step 1.} The first part is devoted to some computations and estimates associated to the forward equation \eqref{FSPDE}. Applying It\^o formula for square norm (see \cite[Theorem 3.1]{Krylov_Rozovskii81} for instance) to \eqref{FWcontrc1} gives
	\begin{align*}
	&\|\rho_1(t)\|^2
	\\
	&=\|\rho_0\|^2-2\int_{0}^{t}\langle\nabla\rho_1(s),\nabla\rho_1(s)\rangle\rd s-2\sum_{i=1}^{k}\int_{0}^{t}\langle f^i(s,\rho_1(s),u'_1(s)),\rho_1(s)\rangle\rd W^i_s\\
	&\quad
	+\sum_{i=1}^{k}\int_{0}^{t}\|f^i(s,\rho_1(s),u'_1(s))\|^2\rd s
	+2\int_{0}^{t}\langle F(s,\rho_1(s),\nabla\rho_1(s),u'_1(s),\nabla u'_1(s),{\psi'_1}(s)),\rho_1(s)\rangle\rd s. 
	\end{align*}
	In view of the Lipschitz continuity in Assumption \ref{ass1}, we have
	\begin{align*}
	&2\int_{0}^{t}\langle F(s,\rho_1(s),\nabla\rho_1(s),u'_1(s),\nabla u'_1(s),{\psi'_1}(s)),\rho_1(s)\rangle\rd s\\
	&\le 2\int_{0}^{t}\|F(s,\rho_1(s),\nabla\rho_1(s),u'_1(s),\nabla u'_1(s),{\psi'_1}(s))\|\cdot\|\rho_1(s)\|\rd s\\
	&\le 2\int_{0}^{t}\bigg( \|F^0_s\|+L^F_1\left(\|\rho_1(s)\|+\|\nabla\rho_1(s)\|\right)+L^F_2\left(\|u'_1(s)\|+\|\nabla u'_1(s)\|+\|{\psi'_1}(s)\|\right)\bigg)\cdot\|\rho_1(s)\|\rd s\\
	&\le 2\int_{0}^{t}\|F^0_s\|\cdot\|\rho_1(s)\|\rd s+2L^F_1\int_{0}^{t}\|\rho_1(s)\|^2\rd s+2L^F_1\int_{0}^{t}\|\nabla\rho_1(s)\|\cdot\|\rho_1(s)\|\rd s\\
	&\quad\quad+2L^F_2\int_{0}^{t}\|u'_1(s)\|\cdot\|\rho_1(s)\|\rd s+2L^F_2\int_{0}^{t}\|\nabla u'_1(s)\|\cdot\|\rho_1(s)\|\rd s+2L^F_2\int_{0}^{t}\|{\psi'_1}(s)\|\cdot\|\rho_1(s)\|\rd s.
	\end{align*}
	Notice that 
	\begin{align*}
	2L^F_2\int_{0}^{t}\|u'_1(s)\|\cdot\|\rho_1(s)\|\rd s&\le 2\sup_{s\in [0,t]}\|\rho_1(s)\|\;L^F_2\int_{0}^{t}\|u'_1(s)\|\rd s \\
	&\le\eps\sup_{s\in [0,t]}\|\rho_1(s)\|^2+\frac{1}{\eps}|{L^F_2}|^2\left(\int_{0}^{t}\|u'_1(s)\|\rd s\right)^2\\
	&\le \eps\sup_{s\in [0,t]}\|\rho_1(s)\|^2+\frac{1}{\eps}|{L^F_2}|^2\;t\int_{0}^{t}\|u'_1(s)\|^2\rd s.
	\end{align*}
	Using similar computations as above and taking supremum over $t\in[0,\tau]$ for $\tau\in [0,T]$, we may arrive at
	\begin{align*}
	&\sup_{t\in [0,\tau]}\|\rho_1(t)\|^2+2\int_{0}^{\tau}\|\nabla\rho_1(s)\|^2\rd s\\
	&\le
	\|\rho_0\|^2
	+ \int_{0}^{\tau}\|f(s,\rho_1(s),u'_1(s))\|^2\rd s
	+2L^F_1\int_{0}^{\tau}\|\rho_1(s)\|^2\rd s\\
	&\quad 
		+2\int_{0}^{\tau}\|F^0_s\|\cdot\|\rho_1(s)\|\rd s	+2L^F_1\int_{0}^{\tau}\|\nabla\rho_1(s)\|\cdot\|\rho_1(s)\|\rd s+\frac{1}{\eps_1}|L^F_2|^2\;\tau\int_{0}^{\tau}\|u'_1(s)\|^2\rd s\\
	&\quad
	+\frac{1}{\eps_2}|L^F_2|^2\;\tau\int_{0}^{\tau}\|\nabla u'_1(s)\|^2\rd s+\frac{1}{\eps_3}|L^F_2|^2\;\tau\int_{0}^{\tau}\sum_{i=1}^{k}\|{\psi'_1}^i(s)\|^2\rd s+(\eps_1+\eps_2+\eps_3)\sup_{t\in [0,\tau]}\|\rho_1(t)\|^2
	\\
	&\quad
	+2\sup_{t\in [0,\tau]}\bigg|\int_{0}^{t}\sum_{i=1}^{k}\langle f^i(s,\rho_1(s),u'_1(s)),\rho_1(s)\rangle\rd W^i_s\bigg|.
	\end{align*}
	A straightforward application of Young's inequality further gives
	\begin{align*}
	&(1-\eps_1-\eps_2-\eps_3)\sup_{t\in [0,\tau]}\|\rho_1(t)\|^2+(2-\eps_4)\int_{0}^{\tau}\|\nabla\rho_1(s)\|^2\rd s\\
	&\le
	\|\rho_0\|^2+\eps_5 \int_{0}^{\tau}\|F^0_s\|^2\rd s+3 \int_{0}^{\tau}\|f^{0}_s\|^2\rd s+\frac{1}{\eps_1}|L_2^F|^2\;\tau\int_{0}^{\tau}\|u'_1(s)\|^2\rd s\\
	&
	+\frac{1}{\eps_2}|L_2^F|^2\;\tau\int_{0}^{\tau}\|\nabla u'_1(s)\|^2\rd s+\frac{1}{\eps_3}|L^F_2|^2\;\tau\int_{0}^{\tau}\sum_{i=1}^{k}\|{\psi'_1}^i(s)\|^2\rd s+3|L_2^f|^2\int_{0}^{\tau}\|u'_1(s)\|^2\rd s\\
	&
	+2\sup_{t\in [0,\tau]}\bigg|\int_{0}^{t}\!\!\sum_{i=1}^{k}\langle f^i(s,\rho_1(s),u'_1(s)),\rho_1(s)\rangle\rd W^i_s\bigg|+\left(2L^F_1+\frac{|L_1^F|^2}{\eps_4}+\frac{1}{\eps_5}+3|L_1^f|^2\right)\int_{0}^{\tau}\!\!\|\rho_1(s)\|^2\rd s.
	\end{align*}
	Letting $\eps_1=\eps_2=\eps_3=1/6,\eps_4=7/4,\eps_5=1$, it follows that
	\begin{align}
	&\sup_{t\in [0,\tau]}\|\rho_1(t)\|^2+\frac{1}{2}\int_{0}^{\tau}\|\nabla\rho_1(s)\|^2\rd s
	\nonumber\\
	&\le
	2\left(\|\rho_0\|^2+\int_{0}^{\tau}\|F^0_s\|^2\rd s+3\sum_{i=1}^{k}\int_{0}^{\tau}\|f^{i,0}_s\|^2\rd s\right)
	+6|L_2^f|^2\int_{0}^{\tau}\|u'_1(s)\|^2\rd s
	\nonumber\\
	&\quad
	+12|{L_2^F}|^2\;\tau\left(\int_{0}^{\tau}\|u'_1(s)\|^2\rd s+\int_{0}^{\tau}\|\nabla u'_1(s)\|^2\rd s+\int_{0}^{\tau} \|{\psi'_1}(s)\|^2\rd s\right)
	\nonumber\\
	&\quad
		+C_1\int_{0}^{\tau}\|\rho_1(s)\|^2\rd s
	+4\sup_{t\in [0,\tau]}\bigg|\int_{0}^{t}\sum_{i=1}^{k}\langle f^i(s,\rho_1(s),u'_1(s)),\rho_1(s)\rangle\rd W^i_s\bigg|,
	\label{ito-formla-est}
	\end{align}
	where constant $C_1=C_1(L_1^f,L^F_1)$.
	
For the terms involving stochastic integrals, we  use BDG inequality and obtain
	\begin{align*}
	&4\bE\bigg[\sup_{t\in [0,\tau]}\bigg|\int_{0}^{t}\sum_{i=1}^{k}\langle f^i(s,\rho_1(s),u'_1(s)),\rho_1(s)\rangle\rd W^i_s\bigg|\bigg]\\
	&\le 
	C\bE\bigg[\left(\int_{0}^{\tau}\sum_{i=1}^{k}\bigg|\langle f^i(s,\rho_1(s),u'_1(s)),\rho_1(s)\rangle\bigg|^2\rd s\right)^\frac{1}{2}\bigg]\\
	&\le 
	\frac{\tilde{C}}{3}\bE\bigg[\left(\int_{0}^{\tau} \bigg(\|f_s^{0}\|+L_1^f\|\rho_1(s)\|+L_2^f\|u'_1(s)\|\bigg)^2
		\cdot\|\rho_1(s)\|^2\rd s\right)^\frac{1}{2}\bigg]\\
	&\le 
		\frac{\tilde{C}}{3}\bE\bigg[\left(\sup_{t\in [0,\tau]}\|\rho_1(t)\|^2\int_{0}^{\tau} \bigg(\|f_s^{0}\|+L_1^f\|\rho_1(s)\|+L_2^f\|u'_1(s)\|\bigg)^2\rd s\right)^\frac{1}{2}\bigg]\\
	&\le 
	\bE\bigg[\eps_6\sup_{t\in [0,\tau]}\|\rho_1(t)\|^2
	+\frac{\tilde{C}^2}{9\eps_6}\int_{0}^{\tau} \bigg(\|f_s^{0}\|+L_1^f\|\rho_1(s)\|+L_2^f\|u'_1(s)\|\bigg)^2\rd s\bigg]\\
	&\le 
	\bE\bigg[\eps_6\sup_{t\in [0,\tau]}\|\rho_1(t)\|^2
	+\frac{\tilde{C}^2}{\eps_6}\int_{0}^{\tau} 
	\bigg(\|f_s^{0}\|^2+|L_1^f|^2\|\rho_1(s)\|^2+|L_2^f|^2\|u'_1(s)\|^2\bigg)\rd s\bigg]\\
	&\le 
	\eps_6\bE\bigg[\sup_{t\in [0,\tau]}\|\rho_1(t)\|^2\bigg]+\frac{\tilde{C}^2|L_1^f|^2}{\eps_6}\bE\int_{0}^{\tau}\|\rho_1(s)\|^2\rd s+\frac{\tilde{C}^2|L_2^f|^2}{\eps_6}\bE\int_{0}^{\tau}\|u'_1(s)\|^2\rd s
	\\
	&\quad
		+\frac{\tilde{C}^2}{\eps_6}\bE\int_{0}^{\tau} \|f_s^{0}\|^2\rd s.
	\end{align*}
	Taking expectations on both sides of \eqref{ito-formla-est} and using above deductions with $\eps_6=1/2$, we have
	\begin{align*}
	&\bE\bigg[\sup_{t\in [0,\tau]}\|\rho_1(t)\|^2\bigg]+\bE\int_{0}^{\tau}\|\nabla\rho_1(s)\|^2\rd s\\
	&\le 4\bE\left[\|\rho_0\|^2+\int_{0}^{\tau}\|F^0_s\|^2\rd s+(\tilde{C}^2+3) \int_{0}^{\tau}\|f^{0}_s\|^2\rd s\right]+C_1\bE\int_{0}^{\tau}\|\rho_1(s)\|^2\rd s\\
	&\quad\quad+24|{L_2^F}|^2\;\tau \bE\left[\int_{0}^{\tau}\|u'_1(s)\|^2\rd s+\int_{0}^{\tau}\|\nabla u'_1(s)\|^2\rd s+\int_{0}^{\tau} \|{\psi'_1}(s)\|^2\rd s\right]\\
	&\quad\quad +2(\tilde{C}^2+3)|L_2^f|^2\tau \bE \bigg[\sup_{t\in [0,\tau]}\|u'_1(t)\|^2\bigg],
	\end{align*}
	which by Gronwall's inequality implies
	\begin{align}
	&\bE\bigg[\sup_{t\in [0,T]}\|\rho_1(t)\|^2\bigg]+\bE\int_{0}^{T}\|\nabla\rho_1(s)\|^2\rd s
	\nonumber\\
	&\le \bar{C}_1\bE\left[\|\rho_0\|^2+\int_{0}^{T}\|F^0_s\|^2\rd s+\int_{0}^{T}\|f^{0}_s\|^2\rd s\right]+\bar{C}_1|L_2^f|^2T \bE \bigg[\sup_{t\in [0,T]}\|u'_1(t)\|^2\bigg]
	\nonumber\\
	&\quad
	+\bar{C}_1|{L_2^F}|^2\;T\bE\left[\int_{0}^{T}\bigg(\|u'_1(s)\|^2+\|\nabla u'_1(s)\|^2+ \|{\psi'_1}(s)\|^2\bigg)\rd s\right], \nonumber
	\end{align}
	with the constant $\bar {C}_1=24 \left(\tilde C^2+3\right)e^{C_1T}$.
		
	Put $\hat{\rho}=\rho_1-\rho_2$, $\hat{u}'=u'_1-u'_2$,  and $\hat{\psi'}={\psi'_1}-{\psi'_2}$. Through analogous applications of Ito formula to $\hat{\rho}$ together with similar computations,  we may arrive at
	\begin{align}
	&\bE\bigg[\sup_{t\in [0,T]}\|\hat{\rho}(t)\|^2\bigg]+\bE\int_{0}^{T}\|\nabla\hat{\rho}(s)\|^2\rd s
	\nonumber\\
	&\le \bar{C}_1|{L_2^F}|^2\;T\bE\int_{0}^{T}\left(\|\hat{u}'(s)\|^2+\|\nabla \hat{u}'(s)\|^2+ \|{\hat{\psi'}}(s)\|^2\right)\rd s+\bar{C}_1|L_2^f|^2T \bE \bigg[\sup_{t\in [0,T]}\|\hat{u}'(t)\|^2\bigg]
	\nonumber\\
	&\le \bar{C}_1|{L_2^F}|^2\;T\bE\int_{0}^{T}\left(\|\nabla \hat{u}'(s)\|^2
	+\|{\hat{\psi'}}(s)\|^2\right)\rd s+\bar{C}_1T\bigg(|L_2^F|^2T+|L_2^f|^2\bigg) \bE \bigg[\sup_{t\in [0,T]}\|\hat{u}'(t)\|^2\bigg]
	\nonumber\\
	&\le \bar{C}_1T\cdot\max \{|L_2^F|^2,|L_2^F|^2T+|L_2^f|^2\}\bE \bigg[\sup_{t\in [0,T]}\|\hat{u}'(t)\|^2
	+\int_{0}^{T}\bigg(\|\nabla \hat{u}'(s)\|^2+ \|{\hat{\psi'}}(s)\|^2\bigg)\rd s\bigg].
	\label{Rf1}
	\end{align}

	 \textbf{Step 2.} Then we conduct the computations for the backward equation \eqref{BSPDE} and derive the wellposedness of FBSPDE \eqref{FSPDE}-\eqref{BSPDE}. Applying It\^o formula to \eqref{BWcontrc1} yields that
	\begin{align*}
	&\|u_1(t)\|^2
	\\
	&=\|g(\rho_1(T)\|^2-2\int_{t}^{T}\langle\nabla u_1(s),\nabla u_1(s)\rangle\rd s-2\sum_{i=1}^{k}\int_{t}^{T}\langle\psi_1^i(s),u_1(s)\rangle\rd W^i_s-\int_{t}^{T}\|\psi_1(s)\|^2\rd s\\
	&
	+2\int_{t}^{T}\langle G(s,\rho_1(s),\nabla\rho_1(s),u_1(s),\nabla u_1(s),{\psi_1}(s)),u_1(s)\rangle\rd s,
	\end{align*}
	which by Assumption \ref{ass2}-(c) implies
	\begin{align}
	&\|u_1(t)\|^2+2\int_{t}^{T}\|\nabla u_1(s)\|^2\rd s
	+ \int_{t}^{T}\|\psi_1(s)\|^2\rd s
	\nonumber\\
	&\le 2\|g^0\|^2+2|{L^g}|^2\|\rho_1(T)\|^2+2\int_{t}^{T}\langle G(s,\rho_1(s),\nabla\rho_1(s),u_1(s),\nabla u_1(s),{\psi_1}(s)),u_1(s)\rangle\rd s 
	\nonumber\\
	&\quad\quad-2\sum_{i=1}^{k}\int_{t}^{T}\langle\psi_1^i(s),u_1(s)\rangle\rd W^i_s.
	\label{ItoBw}
	\end{align}
	Further by Assumption \ref{ass2}-(a) and (c), we have
	\begin{align}
	&2\int_{t}^{T}\langle G(s,\rho_1(s),\nabla\rho_1(s),u_1(s),\nabla u_1(s),{\psi_1}(s)),u_1(s)\rangle\rd s
	\notag\\
	&\le 
	2\int_{t}^{T}\!\!\bigg[ \|G^0_s\|+L^G_1\big(\|\rho_1(s)\|+\|\nabla\rho_1(s)\|\big)+L^G_2\big(\|u_1(s)\|+\|\nabla u_1(s)\|+\|{\psi_1}(s)\|\big)\bigg]\|u_1(s)\|\, \rd s
	\nonumber\\
	&\le 
	2\int_{t}^{T}\!\!\bigg(\|G^0_s\|\cdot\|u_1(s)\|+L^G_2\big(\|u_1(s)\|\cdot\|u_1(s)\|+\|\nabla u_1(s)\|\cdot\|u_1(s)\|+\|{\psi_1}(s)\|\cdot\|u_1(s)\|\big)\bigg)\rd s
	\nonumber\\
	&\quad
	+2L^G_1\int_{t}^{T}\bigg( \|\rho_1(s)\|\cdot\|u_1(s)\|+\|\nabla\rho_1(s)\|\cdot\|u_1(s)\|\bigg)\rd s
	\nonumber\\
	& \le 
	\int_{t}^{T}\bigg(\eps_1\|G^0_s\|^2+\eps_2\|\nabla u_1(s)\|^2+\eps_3 \|{\psi_1}(s))\|^2\bigg)\rd s
	\nonumber\\
	&\quad
	+\left(\frac{1}{\eps_1}+\frac{|{L_2^G}|^2}{\eps_2}+\frac{|{L_2^G}|^2}{\eps_3}+2L^G_2\right)\int_{t}^{T}\|u_1(s)\|^2\, \rd s
	\notag\\
	&\quad
	+2L^G_1\sup_{s\in [t,T]}\|u_1(s)\|\int_{t}^{T}\bigg(\|\rho_1(s)\|+\|\nabla\rho_1(s)\|\bigg)\rd s
	\nonumber\\
	& \le
	 \int_{t}^{T}\bigg(\eps_1\|G^0_s\|^2+\eps_2\|\nabla u_1(s)\|^2+\eps_3 \|{\psi_1}(s))\|^2\bigg)\rd s
	 \notag\\
	 &\quad
	 +\left(\frac{1}{\eps_1}+\frac{|{L_2^G}|^2}{\eps_2}+\frac{|{L_2^G}|^2}{\eps_3}+2L^G_2\right)\int_{t}^{T}\|u_1(s)\|^2\rd s
	 +\eps_4\sup_{s\in [t,T]}\|u_1(s)\|^2
	 \notag
	 \\
	&\quad
	+|{L_1^G}|^2(T-t)\frac{2}{\eps_4}\int_{t}^{T}\left(\|\rho_1(s)\|^2+\|\nabla\rho_1(s)\|^2\right)\rd s.
	\label{est-G-BSPDE}
	\end{align}
	Taking $\eps_1=1,\eps_2=3/2$, and $\eps_3=1/2$ and combining the above computations with 
	\eqref{ItoBw} yield that
	\begin{align*}
	&\|u_1(t)\|^2+\frac{1}{2}\int_{t}^{T}\|\nabla u_1(s)\|^2\rd s+\frac{1}{2}
	\int_{t}^{T}\|\psi_1(s)\|^2\rd s
		\\
	&\le 2\|g^0\|^2+2|L^g|^2\|\rho_1(T)\|^2+\int_{t}^{T}\|G^0_s\|^2\rd s
	+C_2\int_{t}^{T}\|u_1(s)\|^2\rd s+\eps_4\sup_{s\in [t,T]}\|u_1(s)\|^2\\
	&\quad\quad+(T-t)|L^G_1|^2\frac{2}{\eps_4}\int_{t}^{T}\left(\|\rho_1(s)\|^2+ \|\nabla\rho_1(s)\|^2\right)\rd s -2\sum_{i=1}^{k}\int_{t}^{T}\langle\psi_1^i(s),u_1(s)\rangle\rd W^i_s,
	\end{align*}
	where the constant $C_2=C_2(L^G_2)$; in particular,  taking expectations gives
	 \begin{align}
	&\bE\int_{t}^{T}\|\psi_1(s)\|^2\rd s
	\notag\\
	&\le
	 4\bE\bigg[\|g^0\|^2+|{L^g}|^2\|\rho_1(T)\|^2\bigg]+2\bE\int_{t}^{T}\|G^0_s\|^2\rd s+2\eps_4\bE\bigg[\sup_{s\in [t,T]}\|u_1(s)\|^2\bigg]
	\notag\\
	&
	\quad
	+4(T-t)|{L_1^G}|^2\frac{1}{\eps_4}\bE\int_{t}^{T}\bigg(\|\rho_1(s)\|^2+ \|\nabla\rho_1(s)\|^2\bigg)\rd s+C_2\bE\int_{t}^{T}\|u_1(s)\|^2\rd s.
	\label{RB1}
	\end{align}
	
	On the other hand, taking supremum over $t\in[\tau,T]$ for $\tau\in[0,T]$ in \eqref{ItoBw}, we have
	\begin{align*}
	&\sup_{t\in [\tau,T]}\|u_1(t)\|^2+2\int_{\tau}^{T}\|\nabla u_1(s)\|^2\rd s
	+\int_{\tau}^{T}\|\psi_1(s)\|^2\rd s\\
	&\le2\|g^0\|^2+2|{L^g}|^2\|\rho_1(T)\|^2
	+2\sup_{t\in [\tau,T]}\bigg|\int_{t}^{T}\sum_{i=1}^{k}\langle\psi_1^i(s),u_1(s)\rangle\rd W^i_s\bigg|
	 \\
	&\quad
	+2\int_{\tau}^{T}\big\langle G(s,\rho_1(s),\nabla\rho_1(s),u_1(s),\nabla u_1(s),{\psi_1}(s)),u_1(s)\big\rangle\rd s.
	\end{align*}
	Rewriting \eqref{est-G-BSPDE} as 
	\begin{align*}
	&2\int_{\tau}^{T}\big\langle G(s,\rho_1(s),\nabla\rho_1(s),u_1(s),\nabla u_1(s),{\psi_1}(s)),u_1(s)\big\rangle\rd s\\
	& \le \int_{\tau}^{T}\bigg(\eps_1\|G^0_s\|^2+\eps_2\|\nabla u_1(s)\|^2+\eps_3\|{\psi_1}(s))\|^2\bigg)\rd s+\eps_5\sup_{t\in [\tau,T]}\|u_1(t)\|^2\\
	&\quad\quad+\left(\frac{1}{\eps_1}+\frac{|{L_2^G}|^2}{\eps_2}+\frac{|{L_2^G}|^2}{\eps_3}+2L^G_2\right)\int_{\tau}^{T}\|u_1(s)\|^2\rd s\\
	&\quad\quad+2|{L_1^G}|^2(T-\tau)\frac{1}{\eps_5}\int_{\tau}^{T}\left(\|\rho_1(s)\|^2+\|\nabla\rho_1(s)\|^2\right)\rd s,
	\end{align*}
	and choosing $\eps_1=1,\eps_2=7/4,\eps_3=3/4$, and $\eps_5=1/2$, we have
	\begin{align*}
	&\sup_{t\in [\tau,T]}\|u_1(t)\|^2+\frac{1}{2}\int_{\tau}^{T}\|\nabla u_1(s)\|^2\rd s+\frac{1}{2}\int_{\tau}^{T}\|\psi_1(s)\|^2\rd s\\
	&\le 4\bigg(\|g^0\|^2+|{L^g}|^2\|\rho_1(T)\|^2\bigg)+8|{L_1^G}|^2(T-\tau)\int_{\tau}^{T}\bigg(\|\rho_1(s)\|^2+\|\nabla\rho_1(s)\|^2\bigg)\rd s\\
	&\quad\quad+4\sup_{t\in [\tau,T]}\bigg|\int_{t}^{T}\sum_{i=1}^{k}\langle\psi_1^i(s),u_1(s)\rangle\rd W^i_s\bigg|+C_2\int_{\tau}^{T}\|u_1(s)\|^2\rd s+ 2\int_{\tau}^{T}\|G^0_s\|^2\rd s,
	\end{align*}
	with $C_2=C_2(L^G_2)$, which by taking expectations on both sides implies
	\begin{align}
	&\bE\bigg[\sup_{t\in [\tau,T]}\|u_1(t)\|^2\bigg]+\frac{1}{2}\bE\int_{\tau}^{T}\|\nabla u_1(s)\|^2\rd s+\frac{1}{2}\bE\int_{\tau}^{T}\|\psi_1(s)\|^2\rd s
	\nonumber\\
	&\le 
	4\bE\bigg[\|g^0\|^2+|{L^g}|^2\|\rho_1(T)\|^2\bigg]+ 2\bE\bigg[\int_{\tau}^{T}\|G^0_s\|^2\rd s\bigg]+4\bE\bigg[\sup_{t\in [\tau,T]}\bigg|\int_{t}^{T}\sum_{i=1}^{k}\langle\psi_1^i(s),u_1(s)\rangle\rd W^i_s\bigg|\bigg]
	\nonumber\\
	&\quad
	+8|{L_1^G}|^2(T-\tau)\bE\bigg[\int_{\tau}^{T}\bigg(\|\rho_1(s)\|^2+\|\nabla\rho_1(s)\|^2\bigg)\rd s\bigg]+C_2\bE\int_{\tau}^{T}\|u_1(s)\|^2\rd s.
	\label{est-q-1}
	\end{align}
	Again, we use BDG inequality to deal with the stochastic integrals and obtain
	\begin{align*}
	&4\bE\bigg[\sup_{t\in [\tau,T]}\bigg|\int_{t}^{T}\sum_{i=1}^{k}\langle\psi_1^i(s),u_1(s)\rangle\rd W^i_s\bigg|\bigg]\\
	&= 4\bE\bigg[\sup_{t\in [\tau,T]}\bigg|\int_{\tau}^{T}\sum_{i=1}^{k}\langle\psi_1^i(s),u_1(s)\rangle\rd W^i_s-\int_{\tau}^{t}\sum_{i=1}^{k}\langle\psi_1^i(s),u_1(s)\rangle\rd W^i_s\bigg|\bigg]\\
	&\le 4\bE\bigg[\bigg|\int_{\tau}^{T}\sum_{i=1}^{k}\langle\psi_1^i(s),u_1(s)\rangle\rd W^i_s\bigg|+\sup_{t\in [\tau,T]}\bigg|\int_{\tau}^{t}\sum_{i=1}^{k}\langle\psi_1^i(s),u_1(s)\rangle\rd W^i_s\bigg|\bigg]\\
	&\le 4\bE\bigg[\sup_{t\in [\tau,T]}\bigg|\int_{\tau}^{t}\sum_{i=1}^{k}\langle\psi_1^i(s),u_1(s)\rangle\rd W^i_s\bigg|+\sup_{t\in [\tau,T]}\bigg|\int_{\tau}^{t}\sum_{i=1}^{k}\langle\psi_1^i(s),u_1(s)\rangle\rd W^i_s\bigg|\bigg]
	\\
	&\le 
	\tilde{C} \, \bE\bigg[\int_{\tau}^{T}\sum_{i=1}^{k}\bigg|\langle\psi_1^i(s),u_1(s)\rangle\bigg|^2\rd s\bigg]^\frac{1}{2}\\
	&\le
	 \tilde{C} \,  \bE\bigg[\sup_{t\in [\tau,T]}\|u_1(t)\|^2\int_{\tau}^{T}\sum_{i=1}^{k}\|\psi_1^i(s)\|^2\rd s\bigg]^\frac{1}{2}\\
	&\le  \frac{1}{4} \bE\bigg[\sup_{t\in [\tau,T]}\|u_1(t)\|^2\bigg] +{\tilde{C}^2}\bE\int_{\tau}^{T} \|\psi_1(s)\|^2\rd s,
	\end{align*}
	which together with \eqref{RB1} and \eqref{est-q-1} implies
	\begin{align*}
	&\frac{3}{4}\bE\bigg[\sup_{t\in [\tau,T]}\|u_1(t)\|^2\bigg]+\frac{1}{2}\bE\int_{\tau}^{T}\|\nabla u_1(s)\|^2\rd s+\frac{1}{2}\bE \int_{\tau}^{T}\|\psi_1(s)\|^2\rd s
	\\
	&\le 
	\tilde C_1 
	\bE\bigg[\|g^0\|^2+|{L^g}|^2\|\rho_1(T)\|^2\bigg]+ \tilde{C}_1\bE\bigg[\int_{\tau}^{T}\|G^0_s\|^2\rd s\bigg]+ \tilde{C}_2\bE\int_{\tau}^{T}\|u_1(s)\|^2\rd s
	\\
	&\quad
	+\tilde{C}_1|{L_1^G}|^2(T-\tau)\bE\bigg[\int_{\tau}^{T}\bigg(\|\rho_1(s)\|^2+\|\nabla\rho_1(s)\|^2\bigg)\rd s\bigg]+2\tilde{C}^2\eps_4\bE\bigg[\sup_{t\in [\tau,T]}\|u_1(t)\|^2\bigg],
	\end{align*}
	with $\tilde C_1=\tilde C^2+8$ and $\tilde{C}_2= \tilde{C}^2 C_2 + C_2$. 
	Taking $\eps_4=\frac{1}{8\tilde{C}^2}$ gives
	\begin{align*}
	&\bE\bigg[\sup_{t\in [\tau,T]}\|u_1(t)\|^2\bigg]+\bE\int_{\tau}^{T}\|\nabla u_1(s)\|^2\rd s+\bE \int_{\tau}^{T}\|\psi_1^i(s)\|^2\rd s\\
	&\le 2\tilde C_1\bE\bigg[\|g^0\|^2+|{L^g}|^2\|\rho_1(T)\|^2\bigg]
	+ 2 \tilde C_1 \bE\int_{\tau}^{T}\|f^0_s\|^2\rd s+2 \tilde{C}_2\bE\int_{\tau}^{T}\|u_1(s)\|^2\rd s\\
	&\quad\quad+2 \tilde C_1|{L_1^G}|^2(T-\tau)\bE\bigg[\int_{\tau}^{T}\bigg(\|\rho_1(s)\|^2+\|\nabla\rho_1(s)\|^2\bigg)\rd s\bigg],
	\end{align*}
 which by Gronwall's inequality indicates that
	\begin{align*}
	&\bE\bigg[\sup_{t\in [0,T]}\|u_1(t)\|^2\bigg]+\bE\int_{0}^{T}\|\nabla u_1(s)\|^2\rd s+\bE\int_{0}^{T}\|\psi_1(s)\|^2\rd s\\
	&\le \bar{C}_2\bE\bigg[\|g^0\|^2+|{L^g}|^2\|\rho_1(T)\|^2\bigg]+ \bar{C}_2\bE\bigg[\int_{0}^{T}\|G^0_s\|^2\rd s\bigg]\\
	&\quad\quad+\bar{C}_2|{L_1^G}|^2T\bE\bigg[\int_{0}^{T}\bigg(\|\rho_1(s)\|^2+\|\nabla\rho_1(s)\|^2\bigg)\rd s\bigg],
	\end{align*}
	where $\bar{C}_2=2 \tilde C_1 e^{2\tilde{C}_2T}$, with $\tilde{C}_2$ depending on $k$ and $L_2^G$. 
	
	Set $\hat{\rho}=\rho_1-\rho_2,\hat{u}=u_1-u_2,\hat{\psi}=\psi_1-\psi_2$. Analogous calculations for $\hat{u}$ yield that
	\begin{align}
	&\bE\bigg[\sup_{t\in [0,T]}\|\hat{u}(t)\|^2\bigg]+\bE\int_{0}^{T}\left(\|\nabla \hat{u}(s)\|^2+ \|\hat{\psi}(s)\|^2\right)\rd s
	\nonumber\\
	&\le 
	\bar{C}_2|{L^g}|^2\bE\bigg[\|\hat{\rho}(T)\|^2\bigg] 
	+\bar{C}_2|{L_1^G}|^2T\bE\int_{0}^{T}\bigg(\|\hat{\rho}(s)\|^2+\|\nabla\hat{\rho}(s)\|^2\bigg)\rd s
	\nonumber\\
	&\le\bar{C}_2|{L^g}|^2\bE\bigg[\sup_{s\in [0,T]}\|\hat{\rho}(s)\|^2\bigg] 
	+\bar{C}_2|{L_1^G}|^2T\bE\bigg[T\cdot\sup_{s\in [0,T]}\|\hat{\rho}(s)\|^2+\int_{0}^{T}\|\nabla\hat{\rho}(s)\|^2\bigg]\rd s
	\nonumber\\
	&\le \bar{C}_2\cdot\max{\{|{L_1^G}|^2T^2+|{L^g}|^2,|{L_1^G}|^2T\}}\bE\bigg[\sup_{s\in [0,T]}\|\hat{\rho}(s)\|^2+\int_{0}^{T}\|\nabla\hat{\rho}(s)\|^2\bigg]\rd s.
	\label{RBw2}
	\end{align}
 Substituting \eqref{Rf1} into \eqref{RBw2} we finally have
	\begin{align*}
	&\bE\bigg[\sup_{t\in [0,T]}\|\hat{u}(t)\|^2\bigg]+\bE\int_{0}^{T}\left(\|\nabla \hat{u}(s)\|^2+ \|\hat{\psi}(s)\|^2\right)\rd s
	\\
	&
	\le
	 \hat{C}\bE\bigg[\sup_{s\in [0,T]}\|\hat{u}'(s)\|^2+\int_{0}^{T}\left(\|\nabla \hat{u}'(s)\|^2
	 +\|{\hat{\psi'}}(s)\|^2\rd s\right)\bigg],
	\end{align*}
	where $\hat{C}=\bar{C}e^{\bar{C}T}\cdot\max{\{|{L_1^G}|^2T^2+|{L^g}|^2,|{L_1^G}|^2T\}}\cdot T\cdot\max \{|L_2^F|^2,|L_2^F|^2T+|L_2^f|^2\}$ with $\bar{C} $ depending on $L_1^f,L^F_1$ and $L_2^G$.
	Hence we can conclude that  as long as 
	 $$\hat{C}=\bar{C}e^{\bar{C}T}\cdot\max{\{|{L_1^G}|^2T^2+|{L^g}|^2,|{L_1^G}|^2T\}}\cdot T\cdot\max \{|L_2^F|^2,|L_2^F|^2T+|L_2^f|^2\}<1,$$
	 the mapping $(u_1',{\psi_1'})\mapsto(u_1,\psi_1)$ is a contraction and FBSPDE \eqref{FSPDE}-\eqref{BSPDE} admits a unique weak solution $(\rho,u,\psi)$ which satisfies
	\begin{align}
	&\bE\bigg[\sup_{t\in [0,T]}\|\rho(t)\|^2+\sup_{t\in [0,T]}\|u(t)\|^2\bigg]+\bE\int_{0}^{T}\left(\|\nabla\rho(s)\|^2+\|\nabla u(s)\|^2+ \|\psi(s)\|^2\right)\rd s
	\nonumber\\
	&\le C\bE\left[ \|\rho_0\|^2+\|g^0\|^2+\int_{0}^{T}\|F^0_s\|^2\rd s+\int_{0}^{T}\|f^{0}_s\|^2\rd s+ \int_{0}^{T}\|G^0_s\|^2\rd s\right],
	\label{est-q-weak-solution}
	\end{align}
	with $C=C(L_1^f,L_2^f,L^g,L^F_1,L^F_2,L^G_1,L^G_2,T)$.
	
	Now, as $(\rho,u,\psi)$ is the unique weak solution to FBSPDE \eqref{FSPDE}-\eqref{BSPDE}, Assumptions \ref{ass1}-\ref{ass2} allow us to further check that $F(\rho,\nabla\rho,u,\nabla u,\psi)\in\cL^2(L^2)$ and $f(\rho)\in \cL^2(H^{1,2})$. Then by \cite[Theorem 2.5]{KaiDu2020}, $\rho$ is the strong solution of FSPDE \eqref{FSPDE}; similarly,  \cite[Theorem 3.1]{du2011strong} implies that $(u,\psi)$ is the strong solution of BSPDE \eqref{BSPDE}. Therefore, FBSPDE \eqref{FSPDE}-\eqref{BSPDE} admits a unique  strong solution  $(\rho,u,\psi)$ and as a straightforward consequence of \eqref{est-q-weak-solution} and the estimates in  \cite[Theorem 2.5]{KaiDu2020} and \cite[Theorem 3.1]{du2011strong}, it holds that
	\begin{align*}
	&\bE\bigg[\sup_{t\in [0,T]}\|\nabla\rho(t)\|^2+\sup_{t\in [0,T]}\|\nabla u(t)\|^2\bigg]+\bE\int_{0}^{T}\left(\|\Delta\rho(s)\|^2+\|\Delta u(s)\|^2+ \|\psi(s)\|_{1,2}^2\right)\rd s\\
	&\le C\bE\left[ \|\rho_0\|^2+\|\nabla\rho_0\|^2+\| g^0\|^2+\int_{0}^{T}\|F^0_s\|^2\rd s+ \int_{0}^{T}\| f^{0}_s\|^2\rd s+ \int_{0}^{T}\|G^0_s\|^2\rd s\right],
	\end{align*}
	with $C=C(L_1^f,L_2^f,\tilde{L}^f,L^g,\tilde{L}^g,L^F_1,L^F_2,L^G_1,L^G_2,T)$.
\end{proof}

\section{Rate of Convergence for Semi-Discrete Approximation}
Let $(\rho,u,\psi)$ the solution to FBSPDE \eqref{FSPDE}-\eqref{BSPDE}. With the finite element method, we approximate it by the triple $(\rho_h,u_h,\psi_h)$ satisfying the following FBSDE\footnote{As the equations \eqref{dFSPDE} and \eqref{dBSPDE} are essentially finite-dimensional, we call it an FBSDE here.}:
%
\begin{equation} \label{dFSPDE}
\left\{\begin{array}{l}
\begin{split}
\mathrm{d}\rho_h(t)=&\bigg(\Delta_h \rho_h(t)+\Pi_h F(t,x,\rho_h(t),\nabla\rho_h(t),u_h(t),\nabla u_h(t),\psi_h(t))\bigg)\,
\mathrm{d}t\\
&-\sum_{i=1}^{k} \Pi_h f^i(t,\rho_h(t),u_h(t))\mathrm{d}W^i(t),\quad t\in (0,T];\\
\rho_h(0)=&\rho_0,\\
\end{split}
\end{array}\right.
\end{equation}
and
\begin{equation} \label{dBSPDE}
\left\{\begin{array}{l}
\begin{split}
-\mathrm{d}u_h(t)&=\bigg(\Delta_h u_h(t)+ \Pi_h G\big(t,x,\rho_h(t),\nabla\rho_h(t),u_h(t),\nabla u_h(t),\psi_h(t)\big)\bigg)\,\mathrm{d}t
\\
&\quad -\sum_{i=1}^{k}\psi_h^i(t) \mathrm{d}W^i(t), \quad t\in [0,T);\\
u_h(T)&=\Pi_h g(\rho_h(T)).\\
\end{split}
\end{array}\right.
\end{equation}
In what follows, we set
$$\vrh:=\rho-\rho_h,\quad \cU:=u-u_h,\quad \zeta:=\psi-\psi_h.$$


First comes the the wellposedness of FBSDE \eqref{dFSPDE}-\eqref{dBSPDE}.
\begin{lem}\label{lem1}
	Let assumptions \ref{ass1}-\ref{ass2} hold. Then for each fixed $h>0$ there exists $\;\bar{C}=\bar{C}(L^{f},L_1^{F},L^{G}_2)$ such that if 
	\begin{equation}
		\bar{C} e^{\bar{C}T}\cdot\max{\{|L_1^G|^2T^2+|L^g|^2,|L^g|^2T\}}\cdot T\cdot\max\{|L_2^F|^2,|L_2^F|^2T+|L_2^f|^2\}<1,
	\end{equation}
	FBSDE \eqref{dFSPDE}-
	\eqref{dBSPDE} admits a unique solution $(\rho_h,u_h,\psi_h)\in\bigg(\cL^2(V_h^0)\cap\cS^2(V_h^0)\bigg)\times\bigg(\cL^2(V_h^0)\cap\cS^2(V_h^0)\bigg)\times \cL^2(V_h^0)$, with 
	\begin{equation*}
	\begin{split}
	&\begin{split}
	(i)\;&\bE\bigg[\sup_{t\in [0,T]}\|\rho_h(t)\|^2+\sup_{t\in [0,T]}\|u_h(t)\|^2\bigg]+\bE\bigg[\int_{0}^{T}\bigg(\|\nabla\rho_h(s)\|^2+\|\nabla u_h(s)\|^2+ \|\psi_h(s)\|^2\bigg)\rd s\bigg]\\
	&\le C_{1}\bE\bigg[\|\rho_0\|^2+\|g^0\|^2+\int_{0}^{T}\bigg(\|f_s^{0}\|^2+\|F^0_s\|^2+\|G^0_s\|^2\bigg)\rd s\bigg], \\
	\end{split}\\
	&\begin{split}
	(ii)\;&\bE\bigg[\sup_{t\in [0,T]}\|\nabla\rho_h(t)\|^2+\sup_{t\in [0,T]}\|\nabla u_h(t)\|^2\bigg]
	\\
	&
	+\bE\bigg[\int_{0}^{T}\bigg(\|\Delta_h\rho_h(s)\|^2+\|\Delta_h u_h(s)\|^2
	+\|\psi_h(s)\|_{1,2}^2\bigg)\rd s\bigg]\\
	&\le C_{2}\bE\bigg[1+\|\rho_0\|^2+\|\nabla\rho_0\|^2+\|g^0\|^2+\int_{0}^{T}\bigg(\|f_s^{0}\|^2+\|F^0_s\|^2+\|G^0_s\|^2\bigg)\rd s\bigg],\\
	\end{split}\\
	\end{split}
	\end{equation*}
	where the constants $C_{1}=C_{1}(L_1^{f},L_2^f,L^{g},L^{F}_1,L^{F}_2,L_1^{G},L_2^{G},T,C_e)$ and\\  $C_{2}=C_{2}(L_1^{f},L_2^f,\tilde{L}^{f},L^{g},\tilde{L}^{g},L^{F}_1,L^{F}_2,L_1^{G},L_2^{G},T,C_e)$ are independent of $h$.
\end{lem}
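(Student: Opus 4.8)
The plan is to mirror, in the finite-dimensional setting, the contraction-mapping proof of Theorem \ref{Thm1}. The only structural novelties are (a) that the decoupled equations are now genuine finite-dimensional SDEs/BSDEs, so their wellposedness follows from classical theory once Lipschitz continuity of the projected coefficients is checked, and (b) that the gradient estimate (ii) can no longer be quoted from external strong-solution theory and must be produced by hand. Throughout I would use three elementary facts about the $L^2$-projection: the identity $-\langle\Delta_h\xi_h,\phi_h\rangle=\langle\nabla\xi_h,\nabla\phi_h\rangle$ for $\xi_h,\phi_h\in V_h^0$; the relations $\langle\Pi_h\xi,\phi_h\rangle=\langle\xi,\phi_h\rangle$ and $\|\Pi_h\xi\|\le\|\xi\|$ for $\xi\in L^2$, $\phi_h\in V_h^0$; and the $H^{1,2}_0$-stability $\|\Pi_h\xi\|_{1,2}\le C_e\|\xi\|_{1,2}$ from Theorem \ref{FEThm}(a). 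Crucially all three carry $h$-independent constants.

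\textbf{Step 1.} Fix $h>0$ and $(u_h',\psi_h')\in(\cL^2(V_h^0)\cap\cS^2(V_h^0))\times\cL^2(V_h^0)$. Expanding in the nodal basis, \eqref{dFSPDE} with $(u_h,\nabla u_h,\psi_h)$ replaced by $(u_h',\nabla u_h',\psi_h')$ becomes a finite-dimensional It\^o SDE whose coefficients are globally Lipschitz in the unknown — this uses the boundedness of $\Delta_h$ and $\Pi_h$ on $V_h^0$, the Lipschitz continuity of $F,f$ from Assumption \ref{ass1}, and the ($h$-dependent) inverse inequality $\|\nabla\phi_h\|\le C(h)\|\phi_h\|$ on $V_h^0$ — and whose data term lies in $\cL^2(V_h^0)$ by Assumption \ref{ass1}(c); hence there is a unique strong solution $\rho_h\in\cS^2(V_h^0)$. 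Substituting $\rho_h$ into \eqref{dBSPDE} yields a finite-dimensional BSDE with Lipschitz generator (again by boundedness of $\Delta_h,\Pi_h$ and Assumption \ref{ass2}) and square-integrable terminal value $\Pi_h g(\rho_h(T))$, so the classical theory of Lipschitz BSDEs gives a unique $(u_h,\psi_h)\in\cS^2(V_h^0)\times\cL^2(V_h^0)$. This defines a map $\cM:(u_h',\psi_h')\mapsto(u_h,\psi_h)$.

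\textbf{Step 2.} Applying the It\^o formula for the square norm to \eqref{dFSPDE} and using $\langle\Delta_h\rho_h,\rho_h\rangle=-\|\nabla\rho_h\|^2$, $\langle\Pi_hF,\rho_h\rangle=\langle F,\rho_h\rangle$, $\langle\Pi_hf^i,\rho_h\rangle=\langle f^i,\rho_h\rangle$ and $\|\Pi_hf^i\|\le\|f^i\|$, one gets an identity formally identical to the one opening Step~1 of the proof of Theorem \ref{Thm1}; the same Young/BDG/Gronwall manipulations then yield the $L^2$ a priori bound for $\rho_h$ and a difference estimate of the form \eqref{Rf1}, with constants depending only on the $h$-independent stability constants. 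An analogous It\^o computation for \eqref{dBSPDE}, using $\langle\Pi_hG,u_h\rangle=\langle G,u_h\rangle$ and $\|\Pi_hg(\rho_h(T))\|\le\|g(\rho_h(T))\|$, reproduces Step~2 of that proof and gives estimates of the form \eqref{RB1}, \eqref{est-q-1} and \eqref{RBw2}. Composing the forward and backward difference estimates shows that $\cM$ is a contraction, with ratio equal to the left-hand side of the smallness condition, in the norm $\|(v,w)\|_\ast^2=\bE[\sup_t\|v(t)\|^2]+\bE\int_0^T(\|\nabla v(s)\|^2+\|w(s)\|^2)\rd s$; under that condition $\cM$ has a unique fixed point $(u_h,\psi_h)$, which together with the associated $\rho_h$ is the unique solution of \eqref{dFSPDE}--\eqref{dBSPDE}, and estimate (i) follows by inserting the $\rho_h$-bound into the $(u_h,\psi_h)$-bound to eliminate $\|\rho_h(T)\|^2$ and $\int_0^T(\|\rho_h\|^2+\|\nabla\rho_h\|^2)\rd s$ on the right.

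\textbf{Step 3 and main obstacle.} For (ii) I would apply the It\^o formula to the quadratic form $x\mapsto\|\nabla x\|^2=-\langle x,\Delta_hx\rangle$ along $\rho_h$, which produces $-2\int_0^t\|\Delta_h\rho_h\|^2\rd s$, a nonnegative It\^o-correction term controlled by $\sum_i\int_0^t\|\nabla\Pi_hf^i(s,\rho_h(s),u_h(s))\|^2\rd s$, a drift cross-term $-2\int_0^t\langle\Delta_h\rho_h,F\rangle\rd s$, and a stochastic integral; then I bound $\|\nabla\Pi_hf^i(s,\rho_h,u_h)\|\le C_e\|f^i(s,\rho_h,u_h)\|_{1,2}\le C_e\tilde L^f(1+\|\rho_h\|_{1,2}+\|u_h\|_{1,2})$ via the $H^{1,2}_0$-stability of $\Pi_h$ and Assumption \ref{ass1}(b), absorb $\eps\|\Delta_h\rho_h\|^2$ on the left after Young's inequality, estimate $\|F\|$ by Assumption \ref{ass1}(a), treat the stochastic integral with BDG, and close by Gronwall using estimate (i), which already bounds $\bE\int_0^T(\|\rho_h\|^2+\|\nabla\rho_h\|^2+\|u_h\|^2+\|\nabla u_h\|^2+\|\psi_h\|^2)\rd s$; this delivers the $\rho_h$-part of (ii), hence also a bound on $\bE\|\rho_h(T)\|_{1,2}^2$. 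The same computation for $\|\nabla u_h(t)\|^2=-\langle u_h(t),\Delta_hu_h(t)\rangle$ along \eqref{dBSPDE}, with $\|\nabla\Pi_hg(\rho_h(T))\|\le C_e\tilde L^g(1+\|\rho_h(T)\|_{1,2})$ from Assumption \ref{ass2}(b), gives the $(u_h,\psi_h)$-part. The main obstacle is precisely this last computation: the BDG term and the $\bE\int_0^T\|\nabla\psi_h\|^2\rd s$ term arising in the backward gradient estimate cannot be absorbed in one shot and — exactly as in the derivation of \eqref{RB1}--\eqref{est-q-1} — require a two-stage bootstrap, first bounding $\bE\int_0^T\|\nabla\psi_h\|^2\rd s$ by itself at the cost of a term $\eps\,\bE[\sup_t\|\nabla u_h(t)\|^2]$, then feeding that bound into the $\sup$-estimate with $\eps$ small enough to be absorbed, after which Gronwall closes everything; the secondary point requiring care is to verify that no $h$-dependent constant (in particular the inverse inequality used in Step~1) enters any of the a priori bounds, so that $C_1$ and $C_2$ are genuinely independent of $h$.
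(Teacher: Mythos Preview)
Your proposal is correct and follows essentially the same route as the paper's own proof: existence, uniqueness, and (i) by reproducing the contraction argument of Theorem~\ref{Thm1} verbatim in $V_h^0$ via the identities $\langle\Delta_h\rho_h,\rho_h\rangle=-\|\nabla\rho_h\|^2$ and $\langle\Pi_hF,\rho_h\rangle=\langle F,\rho_h\rangle$, and (ii) by It\^o on $\|\nabla\rho_h\|^2$ and $\|\nabla u_h\|^2$ using the $H^{1,2}_0$-stability of $\Pi_h$ together with the linear-growth bounds on $f$ and $g$, with the backward estimate handled by the two-stage bootstrap you describe. Your caveat that the $h$-dependent inverse inequality is used only for wellposedness of the decoupled systems and never in the a~priori bounds is exactly the point the paper leaves implicit.
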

As the computations involved in the proof of Lemma \ref{lem1} are more or less standard, its proof is postponed to the appendix. In what follows, we denote
\begin{align*}
Q_0:=\bE\left[1+\|\rho_0\|_{1,2}^2+\|g^0\|^2
+\int_0^T \left(\|f^0_s\|^2+\|F^{0}_s\|^2+\|G^0_s\|^2\right) \,ds \right].
\end{align*}

Next, we shall prove the error estimate for $\rho$ in terms of $(\cU,\zeta)$. Recalling the $L^2$-projection $\Pi_h$ and the associated error estimates in Theorem \ref{FEThm},we have for $\xi\in H^{1,2}_0$,
\begin{equation}\label{FEResult}
\begin{split}
\|\xi\|^2&=\|\Pi_h \xi\|^2+\|(id-\Pi_h)\xi\|^2\le \|\Pi_h \xi\|^2+C_e^2h^2\|\xi\|^2_{1,2},\quad\mathrm{and}\quad
\|\Pi_h\xi\|^2\le\|\xi\|^2.
\end{split}
\end{equation}
If we assume further $\xi\in H^{2,2}_0$, applying the error estimates in Theorem \ref{FEThm} and the classical Elliptic equation theory (see\cite[Lemma 9.17]{GilbargTrudingerEllipticPDE} for instance) gives that
\begin{align}\label{FEResult2}
\|\left[id-\Pi_h\right]\xi\|_{1,2}\le C_eh\|\xi\|_{2,2}\le C h\|\Delta\xi\|,
\end{align}
where the constant $C$ only depends on the domain $D$ and dimension $d$, being independent of $h$.
\begin{lem}\label{lem2}
	Given $(u,\psi)\in\big(\cL^2(H^{2,2}_0)\cap\cS^2(H^{1,2}_0)\big)\times \cL^2(H^{1,2}_0)$, let $\rho$ be the strong solution of FSPDE (\ref{FSPDE}) and given $(u_h,\psi_h)\in\big(\cL^2(V_h^0)\cap\cS^2(V_h^0)\big)\times \cL^2(V_h^0) $, $\rho_h$ be the  solution of FSDE (\ref{dFSPDE}). Then it holds that for all $\tau\in[0,T]$,
	\begin{equation*}
	\begin{split}
	&\bE\left[\sup_{t\in [0,\tau]}\|\vrh(t)\|^2\right]+\bE\int_{0}^{\tau}\|\nabla\vrh(s)\|^2\rd s\\
	&\le CQ_0h^2+ \bar{C_1}\tau\cdot\max\{|L_2^F|^2,|L_2^F|^2\tau+|L_2^f|^2\} \bE\bigg[\sup_{t\in [0,\tau]}\|\cU(t)\|^2+\int_{0}^{\tau}\bigg(\|\nabla \cU(s)\|^2+\|\zeta(s)\|^2\bigg)\rd s\bigg],
	\end{split}
	\end{equation*}
	where constants $C= C(L_1^{f},L_2^f,\tilde{L}^{f},L^{g},\tilde{L}^{g},L^{F}_1,L^{F}_2,L_1^{G},L_2^{G},T,d,D)$, 
	$\bar{C_1}=C_1e^{C_1\cdot\tau}$, and $C_1=C_1(L_1^f,L_1^F)$ are independent of $h$.
\end{lem}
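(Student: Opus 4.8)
The plan is to transfer Step 1 of the proof of Theorem \ref{Thm1} to the \emph{error} equation, using the $L^2$-projection to split the error into a deterministic finite-element part and a stochastic part living in $V_h^0$. Write $\vrh=\eta+\theta$ with $\eta:=(\mathrm{id}-\Pi_h)\rho$ and $\theta:=\Pi_h\rho-\rho_h\in V_h^0$. The projection part $\eta$ is controlled entirely by Theorem \ref{FEThm} together with the strong-solution bounds of Theorem \ref{Thm1}: by \eqref{FEResult}, $\bE[\sup_{t\in[0,\tau]}\|\eta(t)\|^2]\le C_e^2h^2\,\bE[\sup_{t\in[0,\tau]}\|\rho(t)\|_{1,2}^2]\le CQ_0h^2$, and by \eqref{FEResult2} together with elliptic regularity, $\bE\int_0^\tau\|\nabla\eta(s)\|^2\rd s\le Ch^2\,\bE\int_0^\tau\|\Delta\rho(s)\|^2\rd s\le CQ_0h^2$. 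Thus it suffices to estimate $\theta$.

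Next I would write the equation for $\theta$. Applying $\Pi_h$ to \eqref{FSPDE} (legitimate since $\Pi_h$ is bounded on $L^2$ and $\Delta\rho,F,f^i\in L^2$) and subtracting \eqref{dFSPDE} gives, with $\|\theta(0)\|\le C_eh\|\rho_0\|_{1,2}$,
\[
\rd\theta=\Big(\Delta_h\theta+r_h+\Pi_h\big[F(\cdot,\rho,\nabla\rho,u,\nabla u,\psi)-F(\cdot,\rho_h,\nabla\rho_h,u_h,\nabla u_h,\psi_h)\big]\Big)\rd t-\sum_{i=1}^k\Pi_h\big[f^i(\cdot,\rho,u)-f^i(\cdot,\rho_h,u_h)\big]\rd W^i,
\]
where the consistency remainder $r_h:=\Pi_h\Delta\rho-\Delta_h\Pi_h\rho$ satisfies, by the defining relations of $\Pi_h$ and $\Delta_h$ and the fact that $\rho\in H^{2,2}_0$, the identity $\langle r_h,\phi_h\rangle=-\langle\nabla\eta,\nabla\phi_h\rangle$ for every $\phi_h\in V_h^0$. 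Applying the It\^o formula to $\|\theta(t)\|^2$ and using $\langle\Delta_h\theta,\theta\rangle=-\|\nabla\theta\|^2$ and $\langle r_h,\theta\rangle=-\langle\nabla\eta,\nabla\theta\rangle$ then yields the same structure as \eqref{ito-formla-est}, with $\|\theta\|$ in place of $\|\rho_1\|$ and with the single extra term $-2\langle\nabla\eta,\nabla\theta\rangle$.

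Then I would estimate term by term exactly as in Step 1 of Theorem \ref{Thm1}: bound $-2\langle\nabla\eta,\nabla\theta\rangle\le\tfrac12\|\nabla\theta\|^2+2\|\nabla\eta\|^2$; use $\|\Pi_h v\|\le\|v\|$ to drop the projections; invoke Assumption \ref{ass1}(a)--(b) to dominate the $F$- and $f$-increments by $L_1^F(\|\vrh\|+\|\nabla\vrh\|)+L_2^F(\|\cU\|+\|\nabla\cU\|+\|\zeta\|)$ and $L_1^f\|\vrh\|+L_2^f\|\cU\|$; apply $\|\vrh\|\le\|\eta\|+\|\theta\|$ and $\|\nabla\vrh\|\le\|\nabla\eta\|+\|\nabla\theta\|$ to reduce to $\eta$-, $\theta$- and $(\cU,\zeta)$-norms; control the cross terms by $2L_2^F\int_0^\tau\|\cU(s)\|\|\theta(s)\|\rd s\le\eps\sup_{s}\|\theta(s)\|^2+\tfrac1\eps|L_2^F|^2\tau\int_0^\tau\|\cU(s)\|^2\rd s$ (and likewise for $\nabla\cU$ and $\zeta$), which is where the factor $\tau$ enters; and treat the stochastic integral by the BDG inequality as after \eqref{ito-formla-est}. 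After taking suprema and expectations, absorbing the $\|\nabla\theta\|^2$ and $\sup\|\theta\|^2$ contributions on the left, and applying Gronwall (introducing $e^{C_1\tau}$ with $C_1=C_1(L_1^f,L_1^F)$), the $\eta$-contributions collect into $CQ_0h^2$ and the $(\cU,\zeta)$-contributions collect with coefficient $\bar C_1\tau\max\{|L_2^F|^2,|L_2^F|^2\tau+|L_2^f|^2\}$; combining with the $\eta$-bound of the first paragraph via $\|\vrh\|\le\|\eta\|+\|\theta\|$ gives the claimed estimate.

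The main obstacle is the consistency-error step: one must recognize that substituting the discrete Laplacian applied to $\Pi_h\rho$ for $\Delta\rho$ costs precisely $-\langle\nabla\eta,\nabla\phi_h\rangle$ on $V_h^0$ (which is where the regularity $\rho\in H^{2,2}_0$ and the definitions of $\Pi_h$, $\Delta_h$ are used), and then carry the constants through the Young and Cauchy--Schwarz splittings so that the coefficient of the $(\cU,\zeta)$-norms emerges in exactly the form $\bar C_1\tau\max\{|L_2^F|^2,|L_2^F|^2\tau+|L_2^f|^2\}$ — the shape needed later to close the contraction argument for the fully discrete FBSDE. Everything else is the same It\^o/BDG/Gronwall routine already performed for the continuous forward equation.
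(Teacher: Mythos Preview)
Your proposal is correct and is essentially the paper's own proof: since $\rho_h\in V_h^0$ one has $\Pi_h\vrh=\Pi_h\rho-\rho_h=\theta$, so your $\theta$-equation coincides with the paper's equation \eqref{erFSPDE}, and your consistency identity $\langle r_h,\phi_h\rangle=-\langle\nabla\eta,\nabla\phi_h\rangle$ is exactly what the paper encodes by writing the drift as $\Delta_h\cR_h\vrh$ (note $\langle\Delta_h\cR_h\vrh,\phi_h\rangle=-\langle\nabla\vrh,\nabla\phi_h\rangle$). The only cosmetic difference is that the paper manipulates $-2\langle\nabla\vrh,\nabla\Pi_h\vrh\rangle=-2\|\nabla\vrh\|^2+2\langle\nabla\vrh,\nabla\eta\rangle$ so that $\|\nabla\vrh\|^2$ appears directly on the left (avoiding the final $\|\nabla\vrh\|\le\|\nabla\eta\|+\|\nabla\theta\|$ recombination), and then converts $\sup\|\Pi_h\vrh\|^2$ back to $\sup\|\vrh\|^2$ via \eqref{FEResult}; the It\^o/Young/BDG/Gronwall steps and the resulting constants are the same.
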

\begin{proof} Subtracting (\ref{dFSPDE}) from (\ref{FSPDE}) leads to for $0\le t\le T$
	\begin{equation} \label{erFSPDE}
	\left\{\begin{array}{l}
	\begin{split}
	&\rd\Pi_h\vrh(t)=\Delta_h\cR_h\vrh(t)\rd t-\sum_{i=1}^{k}\Pi_h\bigg[f^i(t,\rho_h(t),u_h(t))-f^i(t,\rho(t),u(t))\bigg] \mathrm{d}W^i_t\\
	&\quad+ \Pi_h\bigg[F\big(t,\rho_h(t),\nabla\rho_h(t),u_h(t),\nabla u_h(t),\psi_h(t)\big)-F\big(t,\rho(t),\nabla\rho(t),u(t),\nabla u(t),\psi(t)\big)\bigg]\,
	\mathrm{d}t,\\
	&\Pi_h\vrh(0)=0.
	\end{split}
	\end{array}\right.
	\end{equation}
	
	Fix $t\in[0,T]$. Applying It\^o's formula to equation (\ref{erFSPDE}) for $\Pi_h\vrh$ yields that $\bP$-a.s.
	\begin{align*}
	&\|\Pi_h\vrh(t)\|^2
	-\int_{0}^{t}\left\|\Pi_h\bigg[f(s,\rho_h(s),u_h(s))-f(s,\rho(s),u(s))\bigg]\right\|^2\rd s
	\\
	&-2\int_{0}^{t}\bigg\langle\Pi_h\bigg[F\big(s,\rho_h(s),\nabla\rho_h(s),u_h(s),\nabla u_h(s),\psi_h(s)\big)
	\\
	&\quad\quad\quad\quad
	-F\big(s,\rho(s),\nabla\rho(s),u(s),\nabla u(s),\psi(s)\big)\bigg],\Pi_h\vrh(s)\bigg\rangle\mathrm{d}s
	\\
	&-2\sum_{i=1}^{k}\int_{0}^{t}\left\langle\Pi_h\bigg[f^i(s,\rho_h(s),u_h(s))-f^i(s,\rho(s),u(s))\bigg],\Pi_h\vrh(s)\right\rangle \mathrm{d}W^i_s
	\\
	&=2\int_{0}^{t}\langle\Delta_h\cR_h\vrh(s),\Pi_h\vrh(s)\rangle\rd s
	\\
	&=-2\int_{0}^{t}\langle\nabla\cR_h\vrh(s),\nabla\Pi_h\vrh(s)\rangle\rd s
	\\
	&=-2\int_{0}^{t}\langle\nabla\vrh(s),\nabla\Pi_h\vrh(s)\rangle\rd s,
	\end{align*}
	and  this gives
	\begin{align}\label{lem2ref1}
	&\|\Pi_h\vrh(t)\|^2+2\int_{0}^{t}\|\nabla\vrh(s)\|^2\rd s\nonumber\\
	&=
	2\int_{0}^{t}\left\langle\nabla\vrh(s),\nabla\left[id-\Pi_h\right]\vrh(s)\right\rangle\rd s
	+\sum_{i=1}^{k}\int_{0}^{t}\left\|\Pi_h\bigg[f^i(s,\rho_h(s),u_h(s))-f^i(s,\rho(s),u(s))\bigg]\right\|^2\rd s\nonumber\\
	&\quad\quad+2\int_{0}^{t}\bigg\langle\Pi_h\bigg[F\big(s,\rho_h(s),\nabla\rho_h(s),u_h(s),\nabla u_h(s),\psi_h(s)\big)\nonumber\\
	&\quad\quad-F\big(s,\rho(s),\nabla\rho(s),u(s),\nabla u(s),\psi(s)\big)\bigg],\Pi_h\vrh(s)\bigg\rangle\mathrm{d}s\nonumber\\
	&\quad\quad -2\sum_{i=1}^{k}\int_{0}^{t}\left\langle\Pi_h\bigg[f^i(s,\rho_h(s),u_h(s))-f^i(s,\rho(s),u(s))\bigg],\Pi_h\vrh(s)\right\rangle \mathrm{d}W^i_s.
	\end{align}
	Using the stability of $L^2$-projection and the Lipschitz property from Assumption \ref{ass1}, we have 
	\begin{align*}
	&2\int_{0}^{t}\bigg\langle\Pi_h\bigg[F\big(s,\rho_h(s),\nabla\rho_h(s),u_h(s),\nabla u_h(s),\psi_h(s)\big)\\
	&\quad\quad-F\big(s,\rho(s),\nabla\rho(s),u(s),\nabla u(s),\psi(s)\big)\bigg],\Pi_h\vrh(s)\bigg\rangle\rd s\\
	&\le 2\int_{0}^{t} \bigg\|\Pi_h\bigg[F\big(s,\rho_h(s),\nabla\rho_h(s),u_h(s),\nabla u_h(s),\psi_h(s)\big)\\
	&\quad\quad-F\big(s,\rho(s),\nabla\rho(s),u(s),\nabla u(s),\psi(s)\big)\bigg]\bigg\|\cdot\bigg\|\Pi_h\vrh(s)\bigg\|\rd s\\
	& \le 2\int_{0}^{t}\bigg(L_1^F\big(\|\vrh(s)\|+\|\nabla\vrh(s)\|\big)+L_2^F\big(\|\cU(s)\|+\|\nabla\cU(s)\|+\|\zeta(s)\|\big)\bigg)\cdot\bigg\|\Pi_h\vrh(s)\bigg\|\rd s\\
	&\le |L_2^F|^2t\int_{0}^{t}\bigg(\frac{1}{\eps_2}\|\cU(s)\|^2+\frac{1}{\eps_3}\|\nabla \cU(s)\|^2+\frac{1}{\eps_4}\|\zeta(s)\|^2\bigg)\rd s+\eps_1\int_{0}^{t}\|\nabla\vrh(s)\|^2\rd s\\
	&\quad\quad+\bigg(\frac{|L_1^F|^2}{\eps_1}+2L_1^F\bigg)\int_{0}^{t}\|\vrh(s)\|^2\rd s+(\eps_2+\eps_3+\eps_4)\sup_{s\in [0,t]}\|\Pi_h\vrh(s)\|^2.
	\end{align*}
	Taking supremum over $t\in[0,\tau]$ for $\tau\in[0,T]$ in \eqref{lem2ref1}, and using deductions as above, we may arrive at
	\begin{align*}
	&(1-\eps_2-\eps_3-\eps_4)\sup_{t\in [0,\tau]}\|\Pi_h\vrh(t)\|^2+(2-\eps_5)\int_{0}^{\tau}\|\nabla\vrh(s)\|^2\rd s\\
	&\le\frac{1}{\eps_5}\int_{0}^{\tau}\|\nabla\left[id-\Pi_h\right]\vrh(s)\|^2\rd s+\eps_1\int_{0}^{\tau}\|\nabla\vrh(s)\|^2\rd s+\bigg(\frac{|L_1^F|^2}{\eps_1}+2L_1^F+2|L_1^f|^2\bigg)\int_{0}^{\tau}\|\vrh(s)\|^2\rd s \\
	&\quad\quad+|L_2^F|^2\tau\int_{0}^{\tau}\bigg(\frac{1}{\eps_2}\|\cU(s)\|^2+\frac{1}{\eps_3}\|\nabla\cU(s)\|^2+\frac{1}{\eps_4}\|\zeta(s)\|^2\bigg)\rd s+2|L_2^f|^2\int_{0}^{\tau}\|\cU(s)\|^2\rd s\\
	&\quad\quad+2\sup_{t\in [0,\tau]}\sum_{i=1}^{k}\bigg|\int_{0}^{t}\left\langle\Pi_h\bigg[f^i(s,\rho_h(s),u_h(s))-f^i(s,\rho(s),u(s))\bigg],\Pi_h\vrh(s)\right\rangle \mathrm{d}W^i_s\bigg|.
	\end{align*}
	Take $\eps_2=\eps_3=\eps_4=\frac{1}{6},\eps_5=1$, and $\eps_1=\frac{3}{4}$. It follows that
	\begin{align*}
	&\sup_{t\in [0,\tau]}\|\Pi_h\vrh(t)\|^2+\frac{1}{2}\int_{0}^{\tau}\|\nabla\vrh(s)\|^2\rd s\\
	&\le 2\int_{0}^{\tau}\|\nabla\left[id-\Pi_h\right]\vrh(s)\|^2\rd s +12|L_2^F|^2\tau\int_{0}^{\tau}\bigg(\|\cU(s)\|^2+\|\nabla \cU(s)\|^2+\|\zeta(s)\|^2\bigg)\rd s\\
	&\quad\quad+4|L_2^f|^2\int_{0}^{\tau}\|\cU(s)\|^2\rd s+C'_1\int_{0}^{\tau}\|\vrh(s)\|^2\rd s\\
	&\quad\quad+4\sup_{t\in [0,\tau]}\sum_{i=1}^{k}\bigg|\int_{0}^{t}\left\langle\Pi_h\bigg[f^i(s,\rho_h(s),u_h(s))-f^i(s,\rho(s),u(s))\bigg],\Pi_h\vrh(s)\right\rangle \mathrm{d}W^i(s)\bigg|,
	\end{align*}
	with the constant $C'_1=C'_1(L_1^f,L_1^F)$.\\
	Using \eqref{FEResult2} we can have $\|\nabla\left[id-\Pi_h\right]\vrh(s)\|\le C \|\varrho(s)\|_{1,2}\le Ch\|\Delta\varrho(s)\|$, and thus
	\begin{align}\label{lem2ref2}
	&\sup_{t\in [0,\tau]}\|\Pi_h\vrh(t)\|^2+\frac{1}{2}\int_{0}^{\tau}\|\nabla\vrh(s)\|^2\rd s\nonumber\\
	&\le 
	Ch^2 \int_0^{\tau} \|\Delta\varrho(s)\|^2\,ds
	+ 12|L_2^F|^2\tau\int_{0}^{\tau}\bigg(\|\cU(s)\|^2+\|\nabla \cU(s)\|^2+\|\zeta(s)\|^2\bigg)\rd s\nonumber\\
	&\quad\quad+4\sup_{t\in [0,\tau]}\sum_{i=1}^{k}\bigg|\int_{0}^{t}\left\langle\Pi_h\bigg[f^i(s,\rho_h(s),u_h(s))-f^i(s,\rho(s),u(s))\bigg],\Pi_h\vrh(s)\right\rangle \mathrm{d}W^i(s)\bigg|\nonumber\\
	&\quad\quad
	+4|L_2^f|^2\int_{0}^{\tau}\|\cU(s)\|^2\rd s
	+C'_1\int_{0}^{\tau}\|\vrh(s)\|^2\rd s,
	\end{align}
	where $C=C(d,D)$.\\ 
	We take expectations and use BDG inequality for the terms involving stochastic integrals to obtain
	\begin{align*}
	&4\bE\bigg[\sup_{t\in [0,\tau]}\bigg|\int_{0}^{t}\sum_{i=1}^{k}\left\langle\Pi_h\left[f^i(s,\rho_h(s),u_h(s))-f^i(s,\rho(s),u(s))\right],\Pi_h\vrh(s)\right\rangle \mathrm{d}W^i(s)\bigg|\bigg]\\
	&\le \tilde{C}\bE\bigg[\bigg(\int_{0}^{\tau}\sum_{i=1}^{k}\left\|\Pi_h\left[f^i(s,\rho_h(s),u_h(s))-f^i(s,\rho(s),u(s))\right]\right\|^2\cdot\left\|\Pi_h\vrh(s)\right\|^2 \mathrm{d}s\bigg)^\frac{1}{2}\bigg]\\
	&\le 
	\tilde{C}\bE\bigg[\bigg(2\int_{0}^{\tau}\bigg(|L_1^f|^2\left\|\vrh(s)\right\|^2+|L_2^f|^2\|\cU(s)\|^2\bigg)\cdot\big\|\Pi_h\vrh(s)\big\|^2 \mathrm{d}s\bigg)^\frac{1}{2}\bigg]\\
	&\le 
	\tilde{C}\bE\bigg[\bigg(2\sup_{t\in [0,\tau]}\big\|\Pi_h\vrh(t)\big\|^2\int_{0}^{\tau}\bigg(|L_1^f|^2\left\|\vrh(s)\right\|^2+|L_2^f|^2\|\cU(s)\|^2\bigg) \mathrm{d}s\bigg)^\frac{1}{2}\bigg]\\
	&\le
	\frac{1}{2}\bE\left[\sup_{t\in [0,\tau]}\big\|\Pi_h\vrh(t)\big\|^2\right]+ 4 |\tilde{C}L_1^f|^2\bE\int_{0}^{\tau}\big\|\vrh(s)\big\|^2 \mathrm{d}s+ 4|\tilde{C}L_2^f|^2\bE\int_{0}^{\tau}\big\|\cU(s)\big\|^2 \mathrm{d}s,
	\end{align*}
	which yields from \eqref{lem2ref2} that
	\begin{equation*}
	\begin{split}
	&\bE\left[\sup_{t\in [0,\tau]}\|\Pi_h\vrh(t)\|^2\right]+\bE\int_{0}^{\tau}\|\nabla\vrh(s)\|^2\rd s\\
	&\le Ch^2\bE\int_0^{\tau} \|\Delta\varrho(s)\|^2
	+ 24|L_2^F|^2\tau \bE\int_{0}^{\tau}\bigg(\|\cU(s)\|^2+\|\nabla \cU(s)\|^2+\|\zeta(s)\|^2\bigg)\rd s+C_1\bE\int_{0}^{\tau}\|\vrh(s)\|^2\rd s\\
	&+4(\tilde{C}^2+1)|L_2^f|^2\tau \bE\bigg[\sup_{t\in [0,\tau]}\|\cU(t)\|^2\bigg],
	\end{split}
	\end{equation*}
	with $C_1=C_1(L_1^f,L_1^F)$.
	\par By \eqref{FEResult}, there holds
	\begin{equation}
	\bE\left[\sup_{t\in [0,\tau]}\|\Pi_h\vrh(t)\|^2\right]\ge \bE\left[\sup_{t\in [0,\tau]}\|\vrh(t)\|^2\right]-Ch^2\bE\left[\sup_{t\in [0,\tau]}\|\varrho(t)\|_{1,2}^2\right].
	\end{equation}
	In view of Theorem \ref{Thm1} and Lemma \ref{lem1}, we have  
	\begin{equation*}
	\begin{split}
	&\bE\left[\sup_{t\in [0,\tau]}\|\vrh(t)\|^2\right]+\bE\int_{0}^{\tau}\|\nabla\vrh(s)\|^2\rd s\\
	&\le CQ_0h^2+ 24|L_2^F|^2\tau \bE\int_{0}^{\tau}\bigg(\|\cU(s)\|^2+\|\nabla \cU(s)\|^2+\|\zeta(s)\|^2\bigg)\rd s+C_1\bE\int_{0}^{\tau}\|\vrh(s)\|^2\rd s
	\\
	&
	+4(\tilde{C}^2+1)|L_2^f|^2\tau \bE\bigg[\sup_{t\in [0,\tau]}\|\cU(t)\|^2\bigg],
	\end{split}
	\end{equation*}
	where $C=C(L_1^{f},L_2^f,\tilde{L}^{f},L^{g},\tilde{L}^{g},L^{F}_1,L^{F}_2,L_1^{G},L_2^{G},T,d,D)$.
	This together with Gronwall's inequality further implies that for all $\tau\in[0,T]$
	\begin{align*}
	&\bE\left[\sup_{t\in [0,\tau]}\|\vrh(t)\|\right]+\bE\int_{0}^{\tau}\|\nabla\vrh(t)\|^2\rd t\\
	&\le CQ_0h^2+ \bar{C}_1|L_2^F|^2\tau \bE\int_{0}^{\tau}\bigg(\|\cU(s)\|^2+\|\nabla \cU(s)\|^2+\|\zeta(s)\|^2\bigg)\rd s+\bar{C}_1|L_2^f|^2\tau \bE\bigg[\sup_{t\in [0,\tau]}\|\cU(t)\|^2\bigg]\\
	&\le CQ_0h^2+ \bar{C}_1|L_2^F|^2\tau \bE\int_{0}^{\tau}\bigg(\|\nabla \cU(s)\|^2+\|\zeta(s)\|^2\bigg)\rd s+\bar{C}_1\tau\bigg(|L_2^F|^2\tau+|L_2^f|^2\bigg) \bE\bigg[\sup_{t\in [0,\tau]}\|\cU(t)\|^2\bigg]\\
	&\le CQ_0h^2+ \bar{C}_1\tau\cdot\max\{|L_2^F|^2,|L_2^F|^2\tau+|L_2^f|^2\} \bE\bigg[\sup_{t\in [0,\tau]}\|\cU(t)\|^2+\int_{0}^{\tau}\bigg(\|\nabla \cU(s)\|^2+\|\zeta(s)\|^2\bigg)\rd s\bigg], 
	\end{align*}
	where $\bar{C}_1=24\big(\tilde{C}^2+1\big)\cdot e^{C_1\tau}$.
\end{proof}

Now, we are ready to give the convergence rate for the semi-discrete approximation.
\begin{thm}\label{thm3}
	Let $(\rho,u,\psi)\in\bigg(\cL^2(H^{2,2}_0)\cap\cS^2(H^{1,2}_0)\bigg)\times\bigg(\cL^2(H^{2,2}_0)\cap\cS^2(H^{1,2}_0)\bigg)\times \cL^2(H^{1,2}_0)$ be the strong solution of 
	FBSPDE \eqref{FSPDE}-\eqref{BSPDE} and $(\rho_h,u_h,\psi_h)\in\bigg(\cL^2(V_h^0)\cap\cS^2(V_h^0)\bigg)\times\bigg(\cL^2(V_h^0)\cap\cS^2(V_h^0)\bigg)\times \cL^2(V_h^0) $ be the solution of FBSDE \eqref{dFSPDE}-\eqref{dBSPDE}. Then $\vrh(t):=\rho(t)-\rho_h(t),\cU(t):=u(t)-u_h(t)$ and $\zeta(t):=\psi(t)-\psi_h(t)$ satisfy
	\begin{equation*}
	\begin{split}
	\bE\bigg[\sup_{t\in [0,T]}\|\vrh(t)\|^2&+\sup_{t\in [0,T]}\|\cU(t)\|^2+\int_{0}^{T}\bigg(\|\nabla\vrh(t)\|^2+\|\nabla\cU(t)\|^2+\|\zeta(t)\|^2\bigg)\rd t\bigg]\le CQ_0h^2,
	\end{split}
	\end{equation*}
	where the constant $C=C(L_1^{f},L_2^f,\tilde{L}^{f},L^{g},\tilde{L}^{g},L^{F}_1,L^{F}_2,L_1^{G},L_2^{G},T,d,D)$.
	
\end{thm}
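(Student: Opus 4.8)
The plan is to run the same contraction scheme as in the proof of Theorem~\ref{Thm1}, but now on the pair of semi-discretization errors. Lemma~\ref{lem2} already supplies one half: it bounds the forward error block $\sup_t\|\vrh(t)\|^2+\int_0^T\|\nabla\vrh\|^2$ by $CQ_0h^2$ plus a small multiple of the backward error block $\sup_t\|\cU(t)\|^2+\int_0^T(\|\nabla\cU\|^2+\|\zeta\|^2)$ (note that $(u,\psi)$ and $(\rho_h,u_h,\psi_h)$ sit in exactly the spaces Lemma~\ref{lem2} requires, since $(\rho,u,\psi)$ is the strong solution and $(\rho_h,u_h,\psi_h)$ comes from Lemma~\ref{lem1}). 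So first I would prove the complementary \emph{backward} error estimate, bounding $\sup_t\|\cU(t)\|^2+\int_0^T(\|\nabla\cU\|^2+\|\zeta\|^2)$ by $CQ_0h^2$ plus a small multiple of $\sup_t\|\vrh(t)\|^2+\int_0^T\|\nabla\vrh\|^2$, and then feed the two into each other and absorb.

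\textbf{Step 1: backward error estimate.} Subtracting \eqref{dBSPDE} from \eqref{BSPDE} and applying the $L^2$-projection $\Pi_h$, one gets, as in Lemma~\ref{lem2} via the identities $\Delta_h\cR_h\cU=\Pi_h\Delta u-\Delta_h u_h$ and $\Pi_h[G-\Pi_h G_h]=\Pi_h[G-G_h]$, a closed equation for $\Pi_h\cU$ with terminal value $\Pi_h\cU(T)=\Pi_h[g(\rho(T))-g(\rho_h(T))]$. Applying the It\^o formula for the square norm to $\Pi_h\cU$ on $[t,T]$, taking $\sup_{t\in[\tau,T]}$, invoking the Lipschitz bound $\|G(\rho,\nabla\rho,u,\nabla u,\psi)-G(\rho_h,\nabla\rho_h,u_h,\nabla u_h,\psi_h)\|\le L_1^G(\|\vrh\|+\|\nabla\vrh\|)+L_2^G(\|\cU\|+\|\nabla\cU\|+\|\zeta\|)$ of Assumption~\ref{ass2}(a), the bound $\|\Pi_h[g(\rho(T))-g(\rho_h(T))]\|\le L^g\|\vrh(T)\|$, Young's inequality as in \eqref{est-G-BSPDE}, and the BDG inequality for $\int\langle\Pi_h\cU,\sum_i\Pi_h\zeta^i\,\rd W^i\rangle$ as in \eqref{est-q-1}, one arrives after a Gronwall step at
\begin{equation*}
\bE\Big[\sup_{t\in[0,T]}\|\cU(t)\|^2+\int_0^T\big(\|\nabla\cU(s)\|^2+\|\zeta(s)\|^2\big)\rd s\Big]\le CQ_0h^2+\bar C_2\max\{|L_1^G|^2T^2+|L^g|^2,|L_1^G|^2T\}\,\bE\Big[\sup_{t\in[0,T]}\|\vrh(t)\|^2+\int_0^T\|\nabla\vrh(s)\|^2\rd s\Big].
\end{equation*}
The $O(h^2)$ term collects only harmless finite-element discrepancies: the cross term $2\langle\nabla\cU,\nabla[\mathrm{id}-\Pi_h]u\rangle$ controlled by $\eps\|\nabla\cU\|^2+C\eps^{-1}h^2\|\Delta u\|^2$ through \eqref{FEResult2}, and the passage between $\|\Pi_h\cU\|^2,\|\Pi_h\zeta\|^2$ and $\|\cU\|^2,\|\zeta\|^2$ costing $C_eh$ times $\|u\|_{1,2},\|\psi\|_{1,2}$ via \eqref{FEResult}; all of these are $\le CQ_0h^2$ in expectation by the strong-solution estimates of Theorem~\ref{Thm1}. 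No additive ``$G^0$''-type term appears, precisely because $\Pi_h[G-\Pi_h G_h]=\Pi_h[G-G_h]$ carries no data.

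\textbf{Step 2: combination and absorption.} Taking $\tau=T$ in Lemma~\ref{lem2} and substituting the resulting bound on $\sup_t\|\vrh(t)\|^2+\int_0^T\|\nabla\vrh\|^2$ into the estimate of Step~1 gives
\begin{equation*}
\bE\Big[\sup_{t\in[0,T]}\|\cU(t)\|^2+\int_0^T\big(\|\nabla\cU(s)\|^2+\|\zeta(s)\|^2\big)\rd s\Big]\le CQ_0h^2+\hat C\,\bE\Big[\sup_{t\in[0,T]}\|\cU(t)\|^2+\int_0^T\big(\|\nabla\cU(s)\|^2+\|\zeta(s)\|^2\big)\rd s\Big],
\end{equation*}
with $\hat C=\bar Ce^{\bar CT}\max\{|L_1^G|^2T^2+|L^g|^2,|L_1^G|^2T\}\cdot T\cdot\max\{|L_2^F|^2,|L_2^F|^2T+|L_2^f|^2\}$, i.e. exactly the coupling quantity of \eqref{condtn-wellp}, so $\hat C<1$ under the standing assumptions (which are in force since the strong solution $(\rho,u,\psi)$ and the discrete solution $(\rho_h,u_h,\psi_h)$ are assumed to exist). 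Absorbing the last term yields $\bE[\sup_t\|\cU(t)\|^2+\int_0^T(\|\nabla\cU\|^2+\|\zeta\|^2)]\le CQ_0h^2$; feeding this back into Lemma~\ref{lem2} gives $\bE[\sup_t\|\vrh(t)\|^2+\int_0^T\|\nabla\vrh\|^2]\le CQ_0h^2$, and adding the two inequalities is the assertion.

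\textbf{Main obstacle.} The only genuinely new computation is the It\^o expansion of $\|\Pi_h\cU\|^2$ in Step~1 — essentially a line-by-line transcription of Step~2 of the proof of Theorem~\ref{Thm1} with the projection tricks of Lemma~\ref{lem2} inserted — and the delicate point is the bookkeeping in Step~2: one must check that the product of the two Gronwall constants produced by Lemma~\ref{lem2} and Step~1 telescopes into exactly the $\hat C$ of \eqref{condtn-wellp} (the mesh-regularity constant $C_e$ entering only the $O(h^2)$ prefactor, never the coupling constant), so that the very smallness hypothesis that guarantees well-posedness is also what closes the error estimate.
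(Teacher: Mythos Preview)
Your proposal is correct and follows essentially the same approach as the paper: derive the backward error estimate by applying It\^o's formula to $\|\Pi_h\cU\|^2$ (using the identity $\Delta_h\cR_h\cU=\Pi_h\Delta u-\Delta_h u_h$, the Lipschitz bounds of Assumption~\ref{ass2}, \eqref{FEResult}--\eqref{FEResult2} for the projection discrepancies, BDG and Gronwall), then substitute Lemma~\ref{lem2} to obtain the self-referencing inequality with coupling constant $\hat C$ equal to the quantity in \eqref{condtn-wellp}, absorb since $\hat C<1$, and feed back into Lemma~\ref{lem2}. The paper's proof differs only in presentation --- it splits the It\^o computation into a preliminary $\zeta$-estimate \eqref{bstep1} before taking the supremum --- but the structure, the handling of the $O(h^2)$ terms, and the final absorption argument are the same as what you outline.
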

\begin{proof}
	Subtracting (\ref{dBSPDE}) from (\ref{BSPDE}) leads to for $0\le t\le T$
	\begin{equation} \label{erBSPDE}
	\left\{\begin{array}{l}
	\begin{split}
	-\rd\Pi_h\cU(t)&=\bigg(\Delta_h\cR_h\cU(t)+ \Pi_h\bigg[G\big(t,\rho_h(t),\nabla\rho_h(t),u_h(t),\nabla u_h(t),\psi_h(t)\big)\\
	&-G\big(t,\rho(t),\nabla\rho(t),u(t),\nabla u(t),\psi(t)\big)\bigg]\bigg)\,
	\mathrm{d}t-\sum_{i=1}^{k}\Pi_h\zeta(t) \mathrm{d}W^i(t),\\
	\Pi_h\cU(0)&=\Pi_h\big[g(\rho(T))-g(\rho_h(T))\big].
	\end{split}
	\end{array}\right.
	\end{equation}
	Our computations will be divided into two parts.
	\par \textbf{Step 1}. This part is devoted to the estimate for $\zeta(t)$. Fix $t\in[0,T]$. Applying It\^o's formula to equation (\ref{erBSPDE}) for $\Pi_h\cU(t)$ yields $\bP$-a.s.
	\begin{align}
	&\|\Pi_h\cU(t)\|^2
	+2\sum_{i=1}^{k}\int_{t}^{T}\langle\Pi_h\zeta^i(s),\Pi_h\cU(s)\rangle\rd W^i_s
	+\int_{t}^{T}\|\Pi_h\zeta(s)\|^2\rd s
	-\|\Pi_h\big[g(\rho(T))-g(\rho_h(T))\big]\|^2
	\notag\\
	&
	\quad\quad-2\int_{t}^{T}\bigg\langle \Pi_h\bigg[G(t,\rho_h(t),\nabla\rho_h(t),u_h(t),\nabla u_h(t),\psi_h(t))
	\notag\\
	&\quad\quad\quad
	-G(t,\rho(t),\nabla\rho(t),u(t),\nabla u(t),\psi(t))\bigg],\Pi_h\cU(s)\bigg\rangle\rd s
	\notag\\
	&
	=2\int_{t}^{T}\langle\Delta_h\cR_h \cU(s), \Pi_h\cU(s)\rangle\rd s
	\notag\\
	&=-2\int_{t}^{T}\langle\nabla\cR_h \cU(s), \nabla\Pi_h\cU(s)\rangle\rd s
\notag\\
	&=-2\int_{t}^{T}\langle\nabla \cU(s),\nabla \Pi_h\cU(s)\rangle\rd s. \label{lem3ref1}
	\end{align}
	Then using Assumption \ref{ass2}, we have
	\begin{align}\label{lem3ref2}
	&2\int_{t}^{T}\bigg\langle \Pi_h\bigg[G(s,\rho_h(s),\nabla\rho_h(s),u_h(s),\nabla u_h(s),\psi_h(s))\nonumber\\
	&\quad\quad-G(s,\rho(s),\nabla\rho(s),u(s),\nabla u(s),\psi(s))\bigg],\Pi_h\cU(s)\bigg\rangle\rd s\nonumber\\
	&\le 2\int_{t}^{T}\bigg\| \Pi_h\bigg[G(s,\rho_h(s),\nabla\rho_h(s),u_h(s),\nabla u_h(s),\psi_h(s))\nonumber\\
	&\quad\quad-G(s,\rho(s),\nabla\rho(s),u(s),\nabla u(s),\psi(s))\bigg]\bigg\|\cdot\bigg\|\Pi_h\cU(s)\bigg\|\rd s \nonumber\\
	&\le 2\int_{t}^{T}  \bigg(L_1^G\big(\|\vrh(s)\|+\|\nabla\vrh(s)\|\big)+L_2^G\big(\|\cU(s)\|+\|\nabla\cU(s)\|+\|\zeta(s)\|\big)\bigg)\cdot\bigg\|\Pi_h\cU(s)\bigg\|\rd s \nonumber\\
	&\le 2|L_1^G|^2(T-t)\frac{1}{\eps_1}\int_{t}^{T}\bigg(\|\vrh(s)\|^2+\|\nabla \vrh(s)\|^2\bigg)\rd s+\int_{t}^{T}\bigg(\eps_3\|\nabla \cU(s)\|^2+\eps_4\|\zeta(s)\|^2\bigg)\rd s\nonumber\\
	&\quad\quad+\bigg(\frac{|L^G_2|^2}{\eps_3}+\frac{|L^G_2|^2}{\eps_4}+2L_2^G\bigg)\int_{t}^{T}\|\Pi_h\cU(s)\|^2\rd s+\eps_1\sup_{s\in [t,T]}\|\Pi_h\cU(s)\|^2,
	\end{align}
	Recalling \eqref{FEResult}, \eqref{FEResult2}, and the estimates in Theorem \ref{Thm1} and Lemma \ref{lem1}, we may insert above computations into \eqref{lem3ref1}  to obtain
	\begin{equation*}
	\begin{split}
	&\|\cU(t)\|^2+(1-\eps_4)\sum_{i=1}^{k}\int_{t}^{T}\|\zeta^i(s)\|^2\rd s+\left(2-\eps_3-\eps_5\right)\int_{t}^{T}\|\nabla \cU(s)\|^2\rd s\\
	&\le C Q_0h^2 +|L^{g}|^2\|\vrh(T)\|^2 
	+2|L_1^G|^2(T-t)\frac{1}{\eps_1}\int_{t}^{T}\bigg(\|\vrh(s)\|^2+\|\nabla \vrh(s)\|^2\bigg)\rd s
	\\&
	\quad\quad
	+C_2\int_{t}^{T}\|\cU(s)\|^2\rd s
	+\eps_1\sup_{s\in [t,T]}\|\cU(s)\|^2-2\sum_{i=1}^{k}\int_{t}^{T}\langle\Pi_h\zeta^i(s),\Pi_h\cU(s)\rangle\rd W^i_s,
	\end{split}
	\end{equation*}
	with $C=C(L_1^{f},L_2^f,\tilde{L}^{f},L^{g},\tilde{L}^{g},L^{F}_1,L^{F}_2,L_1^{G},L_2^{G},T,d,D)$ and 
	$C_2=C_2(L_2^G)$. Taking $\eps_3=\eps_4=\frac{1}{2}$ and $\eps_5=1$, we have
	\begin{equation*}
	\begin{split}
	&\|\cU(t)\|^2+\frac{1}{2}\sum_{i=1}^{k}\int_{t}^{T}\|\zeta^i(s)\|^2\rd s+\frac{1}{2}\int_{t}^{T}\|\nabla \cU(s)\|^2\rd s\\
	&\le CQ_0h^2 +2|L_1^G|^2(T-t)\frac{1}{\eps_1}\int_{t}^{T}\bigg(\|\vrh(s)\|^2+\|\nabla \vrh(s)\|^2\bigg)\rd s+C_2\int_{t}^{T}\|\cU(s)\|^2\rd s
	\\
	&\quad\quad
	+|L^{g}|^2\|\vrh(T)\|^2
	+\eps_1\sup_{s\in [t,T]}\|\cU(s)\|^2-2\sum_{i=1}^{k}\int_{t}^{T}\langle\Pi_h\zeta^i(s),\Pi_h\cU(s)\rangle\rd W^i_s.
	\end{split}
	\end{equation*}
	Taking expectations on both sides implies particularly that	\begin{align}\label{bstep1}
	&\bE\bigg[\int_{t}^{T}\|\zeta(s)\|^2\rd s\bigg]\nonumber\\
	&\le CQ_0h^2 
	+4|L_1^G|^2(T-t)\frac{1}{\eps_1}\bE\bigg[\int_{t}^{T}\bigg(\|\vrh(s)\|^2+\|\nabla \vrh(s)\|^2\bigg)\rd s\bigg]
	\nonumber\\
	&\quad\quad
	+C_2\bE\bigg[\int_{t}^{T}\|\cU(s)\|^2\rd s\bigg]
	+2|L^{g}|^2\bE\bigg[\|\vrh(T)\|^2\bigg]+2\eps_1\bE\bigg[\sup_{s\in [t,T]}\|\cU(s)\|^2\bigg].
	\end{align}
	
	\par \textbf{Step 2}. Taking Supremum over $t\in[\tau,T]$ with $\tau\in[0,T]$ on both sides of \eqref{lem3ref1} and conducting computations as in \eqref{lem3ref2},  we have
	\begin{equation*}
	\begin{split}
	&\bE\bigg[\sup_{t\in [\tau,T]}\|\cU(t)\|^2\bigg]+(1-\eps_4)\bE\int_{\tau}^{T}\|\zeta(s)\|^2\rd s+\left(2-\eps_3-\eps_5\right)\bE\int_{\tau}^{T}\|\nabla \cU(s)\|^2\rd s\\
	&\le \frac{1}{\eps_5}\bE\int_{\tau}^{T}\big\|\nabla[id-\Pi_h]\cU(s)\big\|\rd s 
	+2|L_1^G|^2(T-\tau)\frac{1}{\eps_1}\bE\int_{\tau}^{T}\bigg(\|\vrh(s)\|^2+\|\nabla \vrh(s)\|^2\bigg)\rd s\\&
	\quad\quad+C_2\bE\int_{\tau}^{T}\|\cU(s)\|^2\rd s
	+\eps_1\bE\bigg[\sup_{s\in [\tau,T]}\|\cU(s)\|^2\bigg]+|L^{g}|^2\bE\bigg[\|\vrh(T)\|^2\bigg]\\
	&\quad\quad+2\bE\bigg[\sup_{t\in [\tau,T]}\bigg|\int_{t}^{T}\sum_{i=1}^{k}\langle\Pi_h\zeta^i(s),\Pi_h\cU(s)\rangle\rd W^i_s\bigg|\bigg],
	\end{split}
	\end{equation*}
	Take $\eps_1=1/2,\eps_3=\eps_4=3/4$  and $\eps_5=1$. It holds that
	\begin{align}\label{lem3ref3}
	&\bE\bigg[\sup_{t\in [\tau,T]}\|\cU(t)\|^2\bigg]+\frac{1}{2}\bE\int_{\tau}^{T}\|\zeta(s)\|^2\rd s+\frac{1}{2}\bE\int_{\tau}^{T}\|\nabla \cU(s)\|^2\rd s\nonumber\\
	&\le 2\bE\int_{\tau}^{T}\big\|\nabla[id-\Pi_h]\cU(s)\big\|\rd s 
	+8|L_1^G|^2(T-\tau)\bE\int_{\tau}^{T}\bigg(\|\vrh(s)\|^2+\|\nabla \vrh(s)\|^2\bigg)\rd s
	\nonumber\\
	&
	\quad\quad
	+2|L^{g}|^2\bE\bigg[\|\vrh(T)\|^2\bigg]
	+2C_2\bE\int_{\tau}^{T}\|\cU(s)\|^2\rd s
	+4\bE\bigg[\sup_{t\in [\tau,T]}\bigg|\int_{t}^{T}\sum_{i=1}^{k}\langle\Pi_h\zeta^i(s),\Pi_h\cU(s)\rangle\rd W^i_s\bigg|\bigg].
	\end{align}
	Again we use BDG inequality for the terms involving stochastic integrals to obtain
	\begin{equation*}
	\begin{split}
	&4\bE\bigg[\sup_{t\in [\tau,T]}\bigg|\int_{t}^{T}\sum_{i=1}^{k}\langle\Pi_h\zeta^i(s),\Pi_h\cU(s)\rangle\rd W^i_s\bigg|\bigg]
	\le \frac{1}{4} \bE\bigg[\sup_{t\in [\tau,T]}\| \cU(t)\|^2\bigg] +\tilde{C}^2\bE\int_{\tau}^{T}\|\zeta(s)\|^2\rd s,
	\end{split}
	\end{equation*}
	which together with \eqref{FEResult2},\eqref{bstep1}, \eqref{lem3ref3}, and estimates in Theorem \ref{Thm1} and Lemma \ref{lem1} implies that
	\begin{align*}
	&\frac{3}{4}\bE\bigg[\sup_{t\in [\tau,T]}\|\cU(t)\|^2\bigg]+\frac{1}{2}\bE \int_{\tau}^{T}\|\zeta(s)\|^2\rd s+\frac{1}{2}\bE\int_{\tau}^{T}\|\nabla \cU(s)\|^2\rd s\\
	&\le CQ_0 h^2
	+\tilde{C}_1|L_1^G|^2(T-\tau)\bE\bigg[\int_{\tau}^{T}\bigg(\|\vrh(s)\|^2+\|\nabla \vrh(s)\|^2\bigg)\rd s\bigg]+\tilde{C}_2\bE\int_{\tau}^{T}\|\cU(s)\|^2\rd s\\
	&\quad\quad+\tilde{C}_1|L^{g}|^2\bE\bigg[\|\vrh(T)\|^2\bigg]+2\tilde{C}^2\tilde\eps_1\bE\bigg[\sup_{s\in [t,T]}\|\cU(s)\|^2\bigg].
	\end{align*}
	with constants $C=C(L^{f},\tilde{L}^{f},L^{g},\tilde{L}^{g},L^{F}_1,L^{F}_2,L_1^{G},L_2^{G},T,d,D)$, $\tilde{C}_1=\frac{4\tilde{C}^2}{\eps_1}+8$ and \linebreak $\tilde{C}_2=\tilde{C}^2C_2+C_2$. Taking $\eps_1=\frac{1}{8\tilde{C}^2}$, we have
	\begin{equation*}
	\begin{split}
	&\bE\bigg[\sup_{t\in [\tau,T]}\|\cU(t)\|^2\bigg]+\bE\sum_{i=1}^{k}\int_{\tau}^{T}\|\zeta^i(s)\|^2\rd s+\bE\int_{\tau}^{T}\|\nabla \cU(s)\|^2\rd s\\
	&\le CQ_0h^2
	+2\tilde{C}_1|L_1^G|^2(T-\tau)\bE\bigg[\int_{\tau}^{T}\bigg(\|\vrh(s)\|^2+\|\nabla \vrh(s)\|^2\bigg)\rd s\bigg]+2\tilde{C}_2\bE\int_{\tau}^{T}\|\cU(s)\|^2\rd s\\
	&\quad\quad+2\tilde{C}_1|L^{g}|^2\bE\bigg[\|\vrh(T)\|^2\bigg],
	\end{split}
	\end{equation*}
	which by Gronwall's inequality yields that
	\begin{equation*}
	\begin{split}
	&\bE\bigg[\sup_{t\in [0,T]}\|\cU(t)\|^2\bigg]+\bE \int_{0}^{T}\|\zeta(s)\|^2\rd s+\bE\int_{0}^{T}\|\nabla \cU(s)\|^2\rd s\\
	&\le CQ_0 h^2+\bar{C}_2|L^{g}|^2\bE\bigg[\|\vrh(T)\|^2\bigg]+\bar{C}_2|L_1^G|^2T\bE\bigg[\int_{0}^{T}\bigg(\|\vrh(s)\|^2+\|\nabla \vrh(s)\|^2\bigg)\rd s\bigg],
	\end{split}
	\end{equation*}
	where the constants $C=C(L^{f},\tilde{L}^{f},L^{g},\tilde{L}^{g},L^{F}_1,L^{F}_2,L_1^{G},L_2^{G},T,d,D)$ and $\bar{C}_2=2\tilde{C}_1\cdot e^{2\tilde{C}_2T}$ do not depend on $h$. Then we may write
	\begin{equation*}
	\begin{split}
	&\bE\bigg[\sup_{t\in [0,T]}\|\cU(t)\|^2\bigg]+\bE \int_{0}^{T}\|\zeta(s)\|^2\rd s+\bE\int_{0}^{T}\|\nabla \cU(s)\|^2\rd s\\
	&\le CQ_0h^2+\bar{C}_2|L^{g}|^2\bE\bigg[\sup_{t\in [0,T]}\|\vrh(t)\|^2\bigg]+\bar{C}_2|L_1^G|^2T  \bE\bigg[T\cdot\sup_{t\in [0,T]}\|\vrh(t)\|^2+\int_{0}^{T}\|\nabla \vrh(s)\|^2\bigg]\rd s\\
	&\le CQ_0h^2+\bar{C}_2\cdot\max{\{|L_1^G|^2T^2+|L^g|^2,|L^g|^2T\}}  \bE\bigg[\sup_{t\in [0,T]}\|\vrh(t)\|^2+\int_{0}^{T}\|\nabla \vrh(s)\|^2\bigg]\rd s.
	\end{split}
	\end{equation*}
	Using estimate for $\vrh(t)$ from Lemma \ref{lem2}, we have 	
	\begin{equation*}
	\begin{split}
	&\bE\bigg[\sup_{t\in [0,T]}\|\cU(t)\|^2\bigg]+\bE \int_{0}^{T}\|\zeta(s)\|^2\rd s+\bE\int_{0}^{T}\|\nabla \cU(s)\|^2\rd s\\
	&\le CQ_0h^2+\hat{C} \bE\bigg[\sup_{t\in [0,T]}\|\cU(t)\|^2+\int_{0}^{T}\bigg(\|\nabla \cU(s)\|^2+\|\zeta(s)\|^2\bigg)\rd s\bigg],
	\end{split}
	\end{equation*}
	where $\hat{C}=\bar{C}e^{\bar{C}T}\cdot\max{\{|L_1^G|^2T^2+|L^g|^2,|L^g|^2T\}}\cdot T\cdot\max\{|L_2^F|^2,|L_2^F|^2T+|L_2^f|^2\}$ and\\ $C=C(L_1^{f},L_2^f,\tilde{L}^{f},L^{g},\tilde{L}^{g},L^{F}_1,L^{F}_2,L_1^{G},L_2^{G},T,d,D)$.

	Since we already have from Theorem \ref{Thm1}, that
	\begin{equation*}
	\hat{C}=\bar{C}e^{\bar{C}T}\cdot\max{\{|L_1^G|^2T^2+|L^g|^2,|L^g|^2T\}}\cdot T\cdot\max\{|L_2^F|^2,|L_2^F|^2T+|L_2^f|^2\}<1,
	\end{equation*} 
	we have 
	\begin{equation*}
	\begin{split}
	\bE\bigg[&\sup_{t\in [0,T]}\|\cU(t)\|^2\bigg]+\bE\sum_{i=1}^{k}\int_{0}^{T}\|\zeta^i(s)\|^2\rd s+\bE\int_{0}^{T}\|\nabla \cU(s)\|^2\rd s\le CQ_0h^2,\\
	\end{split}
	\end{equation*}
	where the constant $C=C(L_1^{f},L_2^f,\tilde{L}^{f},L^{g},\tilde{L}^{g},L^{F}_1,L^{F}_2,L_1^{G},L_2^{G},T,d,D)$ is independent of $h$. Combining this estimate with estimate of $\vrh(t)$ from Lemma \ref{lem2} we have finally,
	\begin{equation*}
	\begin{split}
	\bE\bigg[\sup_{t\in [0,T]}\|\vrh(t)\|^2&+\sup_{t\in [0,T]}\|\cU(t)\|^2+\int_{0}^{T}\bigg(\|\nabla\vrh(t)\|^2+\|\nabla\cU(t)\|^2+\|\zeta(t)\|^2\bigg)\rd t\bigg]\le CQ_0h^2.
	\end{split}
	\end{equation*}
\end{proof}


\section{Finite dimensional approximating FBSDEs and deep learning-based algorithms}
\subsection{Finite dimensional approximating FBSDEs}
As the approximations of the solution to FBSPDE \eqref{FSPDE}-\eqref{BSPDE}, the solution to FBSDE \eqref{dFSPDE}-\eqref{dBSPDE} is valued on the finite dimensional space $V^0_h$ and has the following form:
\begin{equation*}
\begin{split}
\rho_h(t,x)=\sum_{l=1}^{L} [\vec{\rho}_h(t)]_l \phi_h^l(x),\quad
u_h(t,x)=\sum_{l=1}^{L} [\vec{u}_h(t)]_l \phi_h^l(x),\quad \text{and}\quad
\psi_h^i(t,x)=\sum_{l=1}^{L} [\vec{\psi}_h^i(t)]_l \phi_h^l(x).
\end{split}
\end{equation*} 
   Then for each $\phi^l_h\in V^0_h, l=1,2,\dots,L$ we have
\begin{align*} 
\langle \rho_h(t),\phi^l_h\rangle&=\langle\rho_0,\phi^l_h\rangle-\int_{0}^{t}\sum_{i=1}^{k}\big\langle f^i(s, \rho_h(s),u_h(s)),\phi^l_h\big\rangle \mathrm{d}W^i(s)\\
&+\int_{0}^{t}\bigg(-\langle\nabla \rho_h(s),\nabla\phi^l_h\rangle+\big\langle F(s,x,\rho_h(s),\nabla\rho_h(s),u_h(s),\nabla u_h(s),\psi_h(s)),\phi^l_h\big\rangle\bigg)\,\mathrm{d}s.
\end{align*}
It may be written as
\begin{align*}
\mathbf{A}\vec{\rho}_h(t)=&\vec{\rho_0}_{\phi}+\int_{0}^{t}\bigg(-\mathbf{B}\vec{\rho}_h(s)+\vec{F}\big(s,\rho_h(s),\nabla\rho_h(s),u_h(s),\nabla u_h(s),\psi_h(s);\phi_h\big)\bigg)\,\mathrm{d}t\\
&-\int_{0}^{t}\sum_{i=1}^{k} \vec{f^i}(s, \rho_h(s),u_h(s);\phi_h) \mathrm{d}W^i(s),
\end{align*}
or equivalently,
\begin{align*}
\vec{\rho}_h(t)=&\mathbf{A}^{-1}\vec{\rho_0}_{\phi}+\mathbf{A}^{-1}\int_{0}^{t}\bigg(-\mathbf{B}\vec{\rho}_h(s)+\vec{F}\big(s,\rho_h(s),\nabla\rho_h(s),u_h(s),\nabla u_h(s),\psi_h(s);\phi_h\big)\bigg)\,\mathrm{d}t\\
&-\mathbf{A}^{-1}\int_{0}^{t}\sum_{i=1}^{k} \vec{f^i}(s, \rho_h(s),u_h(s);\phi_h) \mathrm{d}W^i(s),
\end{align*}
where $\mathbf{A}=\big(a^{ml}\big)_{1\le m,l\le L}$ and $\mathbf{B}=\big(b^{ml}\big)_{1\le m,l\le L}$ with $a^{ml}=\langle \phi_h^l,\phi_h^m\rangle$ and $b^{ml}=\langle \nabla\phi_h^l,\nabla\phi_h^m\rangle$.
The function $\vec{F}(t,\rho_h(t),\nabla\rho_h(t),u_h(t),\nabla u_h(t),\psi_h(t);\phi_h)$ is $\bR^L$-valued with $l$-th entry \linebreak $\big\langle F\big(t,x,\rho_h(t),\nabla\rho_h(t),u_h(t),\nabla u_h(t),\psi_h(t)\big),\phi_h^l\big\rangle$. Here, by $\vec{\mathfrak X}_\phi$ we denote a vector with $l$-th entry $[\vec{\mathfrak X}_\phi]_l:=\la x,\phi_h^l\ra$.

On the other hand, for each $\phi^l_h\in V^0_h, l=1,2,\dots,L$, we have
\begin{equation*} 
\begin{split}
\langle u_h(t),\phi^l_h\rangle&=\langle g(\rho_h(T)),\phi^l_h\rangle-\int_{t}^{T}\sum_{i=1}^{k}\big\langle \psi_h^i(s),\phi^l_h\big\rangle \mathrm{d}W^i(s)\\
&-\int_{t}^{T}\bigg(\langle\nabla \rho_h(s),\nabla\phi^l_h\rangle-\big\langle G(s,x,\rho_h(s),\nabla\rho_h(s),u_h(s),\nabla u_h(s),\psi_h(s)),\phi^l_h\big\rangle\bigg)\,\mathrm{d}s,
\end{split}
\end{equation*}
which may be written equivalently as
\begin{align*}
\vec{u}_h(t)=&\mathbf{A}^{-1}\vec{g}(\rho_h(T);\phi_h)-\int_{t}^{T}\sum_{i=1}^{k} \vec{\psi}_h^i(s) \mathrm{d}W^i(s)\\
&-\mathbf{A}^{-1}\int_{t}^{T}\bigg(\mathbf{B}\vec{u}_h(s)-\vec{G}\big(s,\rho_h(s),\nabla\rho_h(s),u_h(s),\nabla u_h(s),\psi_h(s);\phi_h\big)\bigg)\,\mathrm{d}s.
\end{align*}
\par Finally, we have the following finite dimensional coupled FBSDE
{\small
\begin{equation}\label{fdFBSDE1}
\left\{\begin{array}{l}
\begin{split}
&\begin{split}
\vec{\rho}_h(t)=&\mathbf{A}^{-1}\vec{\rho_0}_{\phi}+\mathbf{A}^{-1}\int_{0}^{t}\bigg(-\mathbf{B}\vec{\rho}_h(s)+\vec{F}\big(s,\rho_h(s),\nabla\rho_h(s),u_h(s),\nabla u_h(s),\psi_h(s);\phi_h\big)\bigg)\,\mathrm{d}s\\
&-\mathbf{A}^{-1}\int_{0}^{t}\sum_{i=1}^{k} \vec{f^i}(s, \rho_h(s),u_h(s);\phi_h) \mathrm{d}W^i(s),
\end{split}\\
&\begin{split}
\vec{u}_h(t)=&\mathbf{A}^{-1}\vec{g}(\rho_h(T);\phi_h)-\int_{t}^{T}\sum_{i=1}^{k} \vec{\psi}_h^i(s) \mathrm{d}W^i(s)\\
&-\mathbf{A}^{-1}\int_{t}^{T}\bigg(\mathbf{B}\vec{u}_h(s)-\vec{G}\big(s,\rho_h(s),\nabla\rho_h(s),u_h(s),\nabla u_h(s),\psi_h(s);\phi_h\big)\bigg)\,\mathrm{d}s,
\end{split}
\end{split}
\end{array}\right.
\end{equation}
}
In the above coupled FBSDE, the $\bR^L$-valued random functions $\vec{\rho}_h(t), \vec{u}_h(t)$, and $\vec{\psi}_h^i(t)$, $i=1,\dots,k$, are unknown expansion coefficients in the expressions for $\rho_h$, $u_h$, and $\psi_h^i$, respectively. 
We shall use existing deep learning-based algorithms to solve the finite dimensional coupled FBSDE of the form \eqref{fdFBSDE1}.

\subsection{Deep learning algorithms for FBSDEs}\label{secDeepLearning}
Let us consider the following general form of coupled FBSDE:
\begin{equation}\label{fdFBSDE}
\left\{\begin{array}{l}
\begin{split}
X(t)=&X_0+\int_{0}^{t}\mu(s,X(s),Y(s),Z(s))\mathrm{d}s-\sum_{i=1}^k\int_{0}^{t} \sigma^i(s,X(s),Y(s))\mathrm{d}W^i(s),\\
Y(t)=&g(X(T))+\int_{t}^{T}b(s,X(s),Y(s),Z(s))\mathrm{d}s-\sum_{i=1}^k\int_{t}^{T} Z^i(s)\mathrm{d}W^i(s),
\end{split}
\end{array}\right.
\end{equation}
with the unknown processes $X(t)$, $Y(t)$, and $Z^i(t)$ being $\bR^L$-valued, for $i=1,2,\cdots,k$. When $\mu$ and $\sigma$ do not depend on $Y$ or $Z$, FBSDE \eqref{fdFBSDE} is decoupled.
Some algorithms based on deep learning techniques that  are highly capable of solving such decoupled FBSDEs \eqref{fdFBSDE} when $L$ is large have just been proposed; see \cite{weinan2017deep,han2018deep,hure2019deep}  for instance. For coupled FBSDEs, an algorithm is proposed in \cite{Han&Long2019coupleddeep} with a convergence analysis, and three more algorithms for fully coupled FBSDEs are also introduced in \cite{JiPeng2020CoupledBSDE}. Under the Markovian framework (i.e., when all the coefficients $\mu$, $\sigma$, and $g$ are deterministic), the solution $(Y(t),Z(t))$ of the BSDE in \eqref{fdFBSDE}, can be expressed as a function of $X(t)$, the solution of forward SDE in \eqref{fdFBSDE}, that is, $\big(Y(t),Z(t)\big)=\big(\cY(t,X(t)),\cZ(t,X(t))\big),t\in[0,T]$ for some deterministic functions $\cY$ and $\cZ$.
This well known result is a key-ingredient for the approximations of BSDEs in all these deep learning-based algorithms for FBSDEs. We will address these deep learning-based algorithms for finite dimensional FBSDEs as Deep BSDE. Deep BSDE methods use neural networks to approximate unknown functions and reformulates the original problem into a stochastic optimization problem. Here we shall present two existing Deep BSDE algorithms: one is for decoupled FBSDE and another for coupled FBSDE. We will introduce a third algorithm as a modified version of  the second one for coupled FBSDEs.

To discuss the numerical algorithms, we first consider a partition of the time interval $[0,T]$ into grid $\pi:{t_0=0<t_1<\dots<t_J=T}$ with $|\pi|=\max_{j=0,\cdots,J-1}\Delta t_j$, $\Delta t_j:=t_{j+1}-t_j$. Now let us consider the forward representation of the BSDE in \eqref{fdFBSDE}, which is written as
\begin{equation}\label{fwrepbw}
Y(t)=Y(0)-\int_{0}^{t}b(s,X(s),Y(s),Z(s))\mathrm{d}s+\sum_{i=1}^k\int_{0}^{t} Z^i(s)\mathrm{d}W^i(s),\quad 0\le t\le T.
\end{equation}
Without any loss of generality, we take $k=1$ in the following subsections.
\subsubsection{Deep BSDE-1 for decoupled Markovian FBSDEs}
This Deep BSDE algorithm proposed in \cite{hure2019deep}  is for decoupled FBSDEs. The forward process $X$ of \eqref{fdFBSDE} is numerically approximated by  $X^{\pi}$ using Euler Scheme on time grid $\pi$. For example, forward Euler scheme can be used which is defined as
\begin{equation}\label{EulerFSDE}
X^{\pi}_{t_{j+1}}=X^{\pi}_{t_j}+\mu(t_j,X^{\pi}_{t_j})\Delta t_j-\sigma(t_j,X^{\pi}_{t_j})\Delta W_{t_j},\: j=0,\cdots,J-1,\:X^{\pi}_{t_0}=X_0,
\end{equation}
with $\Delta W_{t_{j}}:=W(t_{j+1})-W(t_j)$.\\
Here $Y(t)$ and $Z(t)$ are treated as functions of $X(t)$, that is, $Y(t)=\cY(t,X(t))$ and $Z(t)=\cZ(t,X(t))$ for some deterministic functions $\cY$ and $\cZ$. This Deep FBSDE algorithm is based on backward dynamic programming, and the discrete approximations of the functions $\cY(t,\cdot)$ and $\cZ(t,\cdot)$ on time grid $\pi$ are performed backwardly in time. These functions are approximated by deep neural networks.

The algorithm starts with an estimation $Y^{\pi}_{t_J}$ of $\cY(t_J,X^{\pi}_{t_J})$ with $Y^{\pi}_{t_J}=g(X^{\pi}_{t_J})$. Then at each time step $t_j:j=J-1,\cdots,1,0$, given an estimation $Y^{\pi}_{t_{j+1}}$ of $\cY(t_{j+1},X^{\pi}_{t_{j+1}})$, two independent deep neural networks $\mathcal{Y}^{\mathcal{N}}_j(\cdot;\theta_{1,j}) $ and $\mathcal{Z}^{\mathcal{N}}_j(\cdot;\theta_{2,j})$  approximate, respectively, $\cY(t_j,\cdot)$ and $\cZ(t_j,\cdot)$ by minimizing quadratic loss function
\begin{equation}
\widehat{\mathfrak{L}}_j(\theta_j):=\bE\bigg|Y^{\pi}_{t_{j+1}}-Y^F\big(t_{j+1}\big|X^{\pi}_{t_j},\mathcal{Y}^{\mathcal{N}}_j(X^{\pi}_{t_j};\theta_{1,j}),\mathcal{Z}^{\mathcal{N}}_j(X^{\pi}_{t_j};\theta_{2,j})\big)\bigg|^2,
\end{equation}
with respect to its parameters $\theta_j=(\theta_{1,j},\theta_{2,j})$ using gradient based method, where 
\begin{equation}
\begin{split}
&Y^F\big(t_{j+1}\big|X^{\pi}_{t_j},\mathcal{Y}^{\mathcal{N}}_j(X^{\pi}_{t_j};\theta_{1,j}),\mathcal{Z}^{\mathcal{N}}_j(X^{\pi}_{t_j};\theta_{2,j})\big)\\
&= \mathcal{Y}^{\mathcal{N}}_j(X^{\pi}_{t_j};\theta_{1,j})-b\big(t_j,X^{\pi}_{t_j},\mathcal{Y}^{\mathcal{N}}_j(X^{\pi}_{t_j};\theta_{1,j}),\mathcal{Z}^{\mathcal{N}}_j
(X^{\pi}_{t_{j}};\theta_{2,j})\big)\Delta t_j+ \mathcal{Z}^{\mathcal{N}}_j(X^{\pi}_{t_{j}};\theta_{2,j})\Delta W_{t_j},
\end{split}
\end{equation}
is computed from the forward representation \eqref{fwrepbw} of the backward equation. If
\begin{equation*}
\theta_{j}^*=(\theta^*_{1,j},\theta^*_{2,j}) \in \arg \min_{\theta_j\in\Theta} \widehat{\mathfrak{L}}_j(\theta_j),
\end{equation*}
then $\mathcal{Y}^{\mathcal{N}}_j(X^{\pi}_{t_j};\theta^*_{1,j})$ is the approximation of  $Y^{\pi}_{t_j}=\cY(t_j,X^{\pi}_{t_j})$ and $\mathcal{Z}^{\mathcal{N}}_j(X^{\pi}_{t_j};\theta^*_{2,j})$ the approximation of $Z^{\pi}_{t_j}=\cZ(t_j,X^{\pi}_{t_j})$. Finally, the backward induction on time step leads us to $\mathcal{Y}^{\mathcal{N}}_j(X^{\pi}_{t_0};\theta^*_{1,0})$, the approximation of  $Y^{\pi}_{t_0}=\cY(t_0,X^{\pi}_{t_0})$. We refer to \cite{hure2019deep}  for the convergence analysis and various numerical examples.

\subsubsection{Deep BSDE-2 for coupled Markovian FBSDEs}

This algorithm for fully coupled FBSDE is proposed in \cite[Algorithm-2]{JiPeng2020CoupledBSDE}. Here $Y(t)$ and $Z(t)$ are also treated as functions of $X(t)$, that is, $Y(t)=\cY(t,X(t))$ and $Z(t)=\cZ(t,X(t))$ for some deterministic functions $\cY$ and $\cZ$. This method starts with estimations $\cY_0$ and $\cZ_0$ of $\cY(t_0,X_{t_0}^{\pi})=Y^{\pi}_{t_0}$ and 
$\cZ(t_0,X_{t_0}^{\pi})=Z^{\pi}_{t_0}$ respectively and then calculate $X_{t_1}^{\pi}$ and $Y_{t_1}^{\pi}$ by using Euler scheme as
\begin{equation*}
\begin{split}
X_{t_1}^{\pi}&=X_{t_0}^{\pi}+\mu(t_0,X_{t_0}^{\pi},\cY_0,\cZ_0 )\Delta t_0-\sigma(t_0,X_{t_0}^{\pi},Y_{t_0}^{\pi})\Delta W_{t_0},\\
Y_{t_1}^{\pi}&=\cY_0-b(t_0,X_{t_0}^{\pi},\cY_0,\cZ_0 )\Delta t_0+\cZ_0\Delta W_{t_0}.
\end{split}
\end{equation*}
When using two neural networks $\cY_1^{\mathcal{N}}(\cdot;\theta_{1,1})$ and $\cZ_1^{\mathcal{N}}(\cdot;\theta_{2,1})$ to approximate respectively $Y_{t_1}^{\pi}=\cY(t_1,X_{t_1}^{\pi})$ and $Z_{t_1}^{\pi}=\cZ(t_1,X_{t_1}^{\pi})$, the associated local loss function is defined as
\begin{equation*}
\mathfrak{L}_1:=\Delta t_0\cdot\bE\bigg|Y_{t_1}^{\pi}-\cY_1^{\mathcal{N}}(	X_{t_1}^{\pi};\theta_{1,1})\bigg|^2.
\end{equation*}
Then for $j=1,2,\cdots,J-2$, using Euler scheme for $X_{t_{j+1}}^{\pi}$ and $Y_{t_{j+1}}^{\pi}$ gives
\begin{equation*}
\begin{split}
X_{t_{j+1}}^{\pi}&=X_{t_j}^{\pi}+\mu\big(t_j,X_{t_j}^{\pi},\cY_j^{\mathcal{N}}(	X_{t_j}^{\pi};\theta_{1,j}),\cZ_j^{\mathcal{N}}(	X_{t_j}^{\pi};\theta_{2,j}) \big)\Delta t_j-\sigma(t_j,X_{t_j}^{\pi},Y_{t_j}^{\pi})\Delta W_{t_j},\\
Y_{t_{j+1}}^{\pi}&=Y_{t_{j}}^{\pi}-b\big(t_j,X_{t_j}^{\pi},Y_{t_{j}}^{\pi},\cZ_j^{\mathcal{N}}(	X_{t_j}^{\pi};\theta_{2,j}) \big)\Delta t_j+\cZ_j^{\mathcal{N}}(	X_{t_j}^{\pi};\theta_{2,j})\Delta W_{t_j},
\end{split}
\end{equation*}
where the two neural networks $\cY_{j+1}^{\mathcal{N}}(\cdot;\theta_{1,j+1})$ and $\cZ_{j+1}^{\mathcal{N}}(\cdot;\theta_{2,j+1})$ approximate respectively $Y_{t_{j+1}}^{\pi}=\cY(t_{j+1},X_{t_{j+1}}^{\pi})$ and $Z_{t_{j+1}}^{\pi}=\cZ(t_{j+1},X_{t_{j+1}}^{\pi})$ with associated local loss function given by
\begin{equation*}
\mathfrak{L}_{j+1}:=\Delta t_j \cdot\bE\bigg|Y_{t_{j+1}}^{\pi}-\cY_{j+1}^{\mathcal{N}}(	X_{t_{j+1}}^{\pi};\theta_{1,j+1})\bigg|^2.
\end{equation*}
Finally, using Euler scheme for $X_{t_{J}}^{\pi}$ and $Y_{t_{J}}^{\pi}$ gives
\begin{equation*}
\begin{split}
X_{t_{J}}^{\pi}&=X_{t_{J-1}}^{\pi}+\mu\big(t_{J-1},X_{t_{J-1}}^{\pi},\cY_{J-1}^{\mathcal{N}}(	X_{t_{J-1}}^{\pi};\theta_{1,{J-1}}),\cZ_{J-1}^{\mathcal{N}}(	X_{t_{J-1}}^{\pi};\theta_{2,{J-1}}) \big)\Delta t_{J-1}\\
&-\sigma(t_{J-1},X_{t_{J-1}}^{\pi},Y_{t_{J-1}}^{\pi})\Delta W_{t_{J-1}},\\
Y_{t_{J}}^{\pi}&=Y_{t_{J-1}}^{\pi}-b\big(t_{J-1},X_{t_{J-1}}^{\pi},Y_{t_{J-1}}^{\pi},\cZ_{J-1}^{\mathcal{N}}(	X_{t_{J-1}}^{\pi};\theta_{2,{J-1}}) \big)\Delta t_{J-1}+\cZ_{J-1}^{\mathcal{N}}(	X_{t_{J-1}}^{\pi};\theta_{2,{J-1}})\Delta W_{t_{J-1}},
\end{split}
\end{equation*}
and define local loss function
\begin{equation*}
\mathfrak{L}_{J}:=\bE\bigg|Y_{t_{J}}^{\pi}-g(X_{t_{J}}^{\pi})\bigg|^2.
\end{equation*}
Now the scheme is to optimize the global loss function
\begin{equation*}
\widehat{\mathfrak{L}}(\cY_0,\cZ_0,\theta_{1,1},\theta_{2,1},\cdots,\theta_{1,J},\theta_{2,J}):=\sum_{j=1}^{J}\mathfrak{L}_{j},
\end{equation*}
over all $\theta=(\cY_0,\cZ_0,\theta_{1,1},\theta_{2,1},\cdots,\theta_{1,J},\theta_{2,J})$, and for some   $\theta^*=(\cY^*_0,\cZ^*_0,\theta^*_{1,1},\theta^*_{2,1},\cdots,\theta^*_{1,J},\theta^*_{2,J})$ if
\begin{equation*}
\theta^*\in \arg \min_{\theta\in\Theta} \widehat{\mathfrak{L}}(\theta),
\end{equation*}
then $\cY_0^*$ is the desired approximation of $Y^{\pi}_{t_0}$ by Deep BSDE-2. We refer to \cite{JiPeng2020CoupledBSDE} for various numerical examples, while the reader may refer to \cite{Han&Long2019coupleddeep} for an alternative algorithm for a class of coupled Markovian FBSDEs with both convergence analysis and numerical examples.

\subsubsection{Deep BSDE-3 for coupled Markovian FBSDEs}
This method is just a modified version of Deep BSDE-2. This method starts with estimations $\cY_0$ and $\cZ_0$ of $\cY(t_0,X_{t_0}^{\pi})=Y^{\pi}_{t_0}$ and 
$\cZ(t_0,X_{t_0}^{\pi})=Z^{\pi}_{t_0}$ respectively and then calculate $X_{t_1}^{\pi}$ and $Y_{t_1}^{\pi}$ by using Euler scheme as
\begin{equation*}
\begin{split}
X_{t_1}^{\pi}&=X_{t_0}^{\pi}+\mu(t_0,X_{t_0}^{\pi},\cY_0,\cZ_0 )\Delta t_0-\sigma(t_0,X_{t_0}^{\pi},Y_{t_0}^{\pi})\Delta W_{t_0},\\
Y_{t_1}^{\pi}&=\cY_0-b(t_0,X_{t_0}^{\pi},\cY_0,\cZ_0 )\Delta t_0+\cZ_0\Delta W_{t_0},
\end{split}
\end{equation*}
and use two neural networks $\cY_1^{\mathcal{N}}(\cdot;\theta_{1,1})$ and $\cZ_1^{\mathcal{N}}(\cdot;\theta_{2,1})$ to approximate respectively $Y_{t_1}^{\pi}=\cY(t_1,X_{t_1}^{\pi})$ and $Z_{t_1}^{\pi}=\cZ(t_1,X_{t_1}^{\pi})$ with associated local loss function
\begin{equation*}
\mathfrak{L}_1:=\bE\bigg|Y_{t_1}^{\pi}-\cY_1^{\mathcal{N}}(	X_{t_1}^{\pi};\theta_{1,1})\bigg|^2.
\end{equation*}
Then for $j=1,2,\cdots,J-2$, using Euler scheme to calculate $X_{t_{j+1}}^{\pi}$ and $Y_{t_{j+1}}^{\pi}$ gives
\begin{align*}
X_{t_{j+1}}^{\pi}&=X_{t_j}^{\pi}+\mu\big(t_j,X_{t_j}^{\pi},\cY_j^{\mathcal{N}}(	X_{t_j}^{\pi};\theta_{1,j}),\cZ_j^{\mathcal{N}}(	X_{t_j}^{\pi};\theta_{2,j}) \big)\Delta t_j-\sigma(t_j,X_{t_j}^{\pi},Y_{t_j}^{\pi})\Delta W_{t_j},\\
Y_{t_{j+1}}^{\pi}&=\cY_j^{\mathcal{N}}(	X_{t_j}^{\pi};\theta_{1,j})-b\big(t_j,X_{t_j}^{\pi},\cY_j^{\mathcal{N}}(	X_{t_j}^{\pi};\theta_{1,j}),\cZ_j^{\mathcal{N}}(	X_{t_j}^{\pi};\theta_{2,j}) \big)\Delta t_j+\cZ_j^{\mathcal{N}}(	X_{t_j}^{\pi};\theta_{2,j})\Delta W_{t_j}.
\end{align*}
Here, the difference from Deep BSDE-2 is lying in that the computation of $Y_{t_{j+1}}^{\pi}$ is based on the neural networks $\cY_j^{\mathcal{N}}(	X_{t_j}^{\pi};\theta_{1,j})$ and $\cZ_j^{\mathcal{N}}(	X_{t_j}^{\pi};\theta_{2,j})$, rather than $Y_{t_{j}}^{\pi}$ and $Z_{t_{j}}^{\pi}$. Further, using two neural networks $\cY_{j+1}^{\mathcal{N}}(\cdot;\theta_{1,j+1})$ and $\cZ_{j+1}^{\mathcal{N}}(\cdot;\theta_{2,j+1})$ to approximate respectively $Y_{t_{j+1}}^{\pi}=\cY(t_{j+1},X_{t_{j+1}}^{\pi})$ and $Z_{t_{j+1}}^{\pi}=\cZ(t_{j+1},X_{t_{j+1}}^{\pi})$ with associated local loss function
\begin{equation*}
\mathfrak{L}_{j+1}:=\bE\bigg|Y_{t_{j+1}}^{\pi}-\cY_{j+1}^{\mathcal{N}}(	X_{t_{j+1}}^{\pi};\theta_{1,j+1})\bigg|^2.
\end{equation*}
Finally, using Euler scheme to calculate $X_{t_{J}}^{\pi}$ and $Y_{t_{J}}^{\pi}$ gives
\begin{align*}
X_{t_{J}}^{\pi}&=X_{t_{J-1}}^{\pi}+\mu\big(t_{J-1},X_{t_{J-1}}^{\pi},\cY_{J-1}^{\mathcal{N}}(	X_{t_{J-1}}^{\pi};\theta_{1,{J-1}}),\cZ_{J-1}^{\mathcal{N}}(	X_{t_{J-1}}^{\pi};\theta_{2,{J-1}}) \big)\Delta t_{J-1}\\
&-\sigma(t_{J-1},X_{t_{J-1}}^{\pi},Y_{t_{J-1}}^{\pi})\Delta W_{t_{J-1}},\\
Y_{t_{J}}^{\pi}&=\cY_{J-1}^{\mathcal{N}}(	X_{t_{J-1}}^{\pi};\theta_{1,{J-1}})-b\big(t_{J-1},X_{t_{J-1}}^{\pi},\cY_{J-1}^{\mathcal{N}}(	X_{t_{J-1}}^{\pi};\theta_{1,{J-1}}),\cZ_{J-1}^{\mathcal{N}}(	X_{t_{J-1}}^{\pi};\theta_{2,{J-1}}) \big)\Delta t_{J-1}\\
&+\cZ_{J-1}^{\mathcal{N}}(	X_{t_{J-1}}^{\pi};\theta_{2,{J-1}})\Delta W_{t_{J-1}},
\end{align*}
and define local loss function 
\begin{equation*}
\mathfrak{L}_{J}:=\bE\bigg|Y_{t_{J}}^{\pi}-g(X_{t_{J}}^{\pi})\bigg|^2.
\end{equation*}
The scheme is to minimize the global loss function
\begin{equation*}
\widehat{\mathfrak{L}}(\cY_0,\cZ_0,\theta_{1,1},\theta_{2,1},\cdots,\theta_{1,J},\theta_{2,J}):=\sum_{j=1}^{J}\mathfrak{L}_{j}.
\end{equation*}
over all $\theta=(\cY_0,\cZ_0,\theta_{1,1},\theta_{2,1},\cdots,\theta_{1,J},\theta_{2,J})$. If  
\begin{equation*}
\theta^* =(\cY^*_0,\cZ^*_0,\theta^*_{1,1},\theta^*_{2,1},\cdots,\theta^*_{1,J},\theta^*_{2,J})\in \arg \min_{\theta\in\Theta} \widehat{\mathfrak{L}}(\theta),
\end{equation*}
then $\cY_0^*$ is the desired approximation of $Y^{\pi}_{t_0}$ by Deep BSDE-3.

  In contrast with Deep BSDE-2, $Y_{t_{j}}^{\pi}$ and $ Z_{t_{j}}^{\pi}$ in Deep BSDE-3   are replaced by $\cY_j^{\mathcal{N}}(	X_{t_j}^{\pi};\theta_{1,j})$ and $\cZ_j^{\mathcal{N}}(	X_{t_j}^{\pi};\theta_{2,j})$ in Euler scheme to calculate $Y_{t_{j+1}}^{\pi}$ for $j=1,2,\cdots,J-1$.

\section{Numerical examples}
In this section, first, we will discuss the finite-dimensional framework for FBSPDEs on the domain $D=(0,1)$ with homogeneous Dirichlet boundary conditions. Then, we will solve two examples of FBSPDEs with the finite element method and deep learning schemes.
\subsection{Framework for Homogeneous Dirichlet Boundary}\label{frameDC}
Let $D\subset \mathbb{R}$ with $D=(0,1)$ and $\mathcal{T}_h:0=x_0< x_1<\cdots< x_L< x_{L+1}=1$ be a partition of the domain $D$ into $L+1$ subintervals $I_j=(x_{j-1},x_j)$ with $h_j=|I_j|=x_j-x_{j-1},j=1,2,\cdots,L,L+1$. Define $h=\max\{h_j:j=1,2,\cdots,L,L+1\}$. Then let $\{\phi_h^1,\phi_h^2,\cdots,\phi_h^L\}$ be the set of nodal basis functions corresponding to the internal nodes  $\{x_1,x_2,\cdots,x_L\}$ which span the finite dimensional function space $V_h^0$. Nodal basis functions $\phi_h^i, i=1,2,\cdots,L$ are hat functions and given by 
{\small
\begin{equation}
\phi_h^i(x)=\left\{\begin{array}{l}
\begin{split}
\frac{x-x_{i-1}}{h_i},&\quad\text{ for}\quad x_{i-1}\le x\le x_i\\
\frac{x_{i+1}-x}{h_{i+1}},&\quad\text{ for}\quad x_{i}\le x\le x_{i+1}\\
0,\quad\quad&\quad \text{ else.}
\end{split}
\end{array}\right.
\end{equation}
}
Set $\phi_h^{i,-1}(x)=\frac{x-x_{i-1}}{h_i}$ and $\phi_h^{i,+1}(x)=\frac{x_{i+1}-x}{h_{i+1}}$.
%

The mass matrix $\mathbf{A}=[a_{ij}]_{i,j=1}^L$ is endowed with entries $a_{ij}=\langle \phi_h^i,\phi_h^j \rangle$. 
Basic calculations imply that the symmetric Mass matrix $\mathbf{A}$ is given by
{\small $$
\mathbf{A}=\begin{pmatrix}
\frac{h_1}{3}+\frac{h_{2}}{3}&\frac{h_2}{6}&0&\cdots&0&0\\
\frac{h_2}{6}&\frac{h_2}{3}+\frac{h_3}{3}&\frac{h_3}{6}&\cdots&0&0\\
0&\cdots&\cdots&\cdots&\cdots&0\\
\vdots&\vdots&\vdots&\vdots&\vdots&\vdots\\
0&0&\cdots&\cdots&\frac{h_{L-1}}{3}+\frac{h_{L}}{3}&\frac{h_L}{6}\\
0&0&0&\cdots&\frac{h_{L}}{6}&\frac{h_L}{3}+\frac{h_{L+1}}{3}
\end{pmatrix}_{L\times L}.
$$
}

In a similar way, the stiffness matrix $\mathbf{B}=[b_{ij}]_{i,j=1}^L$ is equipped with entries $b_{ij}=\langle \nabla\phi_h^i,\nabla\phi_h^j \rangle$. 
Straightforward computations yield the Stiffness matrix:
{\small 
$$
\mathbf{B}=\begin{pmatrix}
\frac{1}{h_1}+\frac{1}{h_{2}}&-\frac{1}{h_2}&\cdots&0&0&0\\
-\frac{1}{h_2}&\frac{1}{h_2}+\frac{1}{h_3}&-\frac{1}{h_3}&\cdots&0&0\\
0&\cdots&\cdots&\cdots&\cdots&0\\
\vdots&\vdots&\vdots&\vdots&\vdots&\vdots\\
0&0&\cdots&\cdots&\frac{1}{h_{L-1}}+\frac{1}{h_{L}}&-\frac{1}{h_L}\\
0&0&0&\cdots&-\frac{1}{h_{L}}&\frac{1}{h_L}+\frac{1}{h_{L+1}}
\end{pmatrix}_{L\times L}.
$$
}
In addition, the $l-$th component of the vector $\vec{g}(\rho_h(T);\phi_h)$ is defined as $\langle g(\rho_h(T)),\phi_h^l\rangle$. The involved integrals may be evaluated via conventional numerical approximations. 
Here, to evaluate the integrals one can use numerical integration. 

\subsection{Example 1}
 Consider the following decoupled FBSPDE with homogeneous Dirichlet boundary conditions:
\begin{equation}\label{Ex1FSPDE}
\left\{\begin{array}{l}
\begin{split}
\rd\rho(t,x)&=\delta\Delta \rho(t,x)\,\rd t-\rho(t,x)\sum_{i=1}^{k}\gamma_t^i \,\rd W_{t}^{i}
\\
\rho(0,x)&=\rho_0(x), \\
\rho(t,x)\bigg|_{\partial D}&=0,
\\
-\rd u(t,x)&=\bigg(\delta\Delta u(t,x)+\sum_{i=1}^{k} \gamma_{t}^{i} \psi^i(t,x)+f(t,x,\rho(t,x))\bigg)\,\rd t-\sum_{i=1}^{k}\psi^i (t,x)\,\rd W_{t}^{i}
\\
u(T,x)&=g(\rho(T,x)), \\
u(t,x)\bigg|_{\partial D}&=0.
\end{split}
\end{array}\right.
\end{equation}
Here, $ k=1, g(x)=1-e^{-x},\rho_0>0, \gamma_t\equiv \gamma$ is aconstant and $f(t,x,\rho(t,x))$ is given by
\begin{align*}
	f(t,x,\rho(t,x))=&\delta\left(\bE[\nabla \rho_0(x+\sqrt{2\delta}B_t)]e^{-\gamma W_t-\frac{1}{2}\gamma^2t}\right)^2e^{-\rho(t,x)}+\frac{1}{2}\gamma^2\rho^2(t,x)e^{-\rho(t,x)}\\
	&+\gamma^2\rho(t,x)e^{-\rho(t,x)}-2\delta\bE[\Delta \rho_0(x+\sqrt{2\delta}B_t)]e^{-\gamma W_t-\frac{1}{2}\gamma^2t}e^{-\rho(t,x)}.
\end{align*}
The analytic solution of above FBSPDE gives
\begin{equation*}
	\rho(t,x)=\bE[\rho_0(x+\sqrt{2\delta} B_t)]e^{-\gamma W_t-\frac{1}{2}\gamma^2t},
\end{equation*}
and
\begin{equation*}
	u(t,x)=1-e^{-\rho(t,x)}, 
\end{equation*}
where $B_t$ is an (auxilliary) standard Brownian motion independent of $W_t$.
\par

The approximating finite dimensional FBSDEs are of the following form:
\begin{equation}\label{Ex1FDforw}
\left\{\begin{array}{l}
\begin{split}
\mathrm{d}\vec{\rho}_h(t)&=\mu(t,\vec{\rho}_h(t))\mathrm{d}t- \sigma(t,\vec{\rho}_h(t))\mathrm{d}W(t),\quad
\vec{\rho}_h(0)=\mathbf{A}^{-1}\vec{\rho_0}_\phi,\\
-\mathrm{d}\vec{u}_h(t)&=b\big(t,\vec{\rho}_h(t),\vec{u}_h(t),\vec{\psi}_h(t)\big)\mathrm{d}t- \vec{\psi}_h(t)\mathrm{d}W(t),\quad
\vec{u}_h(T)=\mathbf{A}^{-1}\vec{g}(\rho_h(T);\phi_h),
\end{split}
\end{array}\right.
\end{equation}
with 
\begin{equation*}
\begin{split}
\mu(t,\vec{\rho}_h):=&-\delta\mathbf{A}^{-1}\mathbf{B}\vec{\rho}_h,\\
\sigma(t,\vec{\rho}_h):=&\gamma_t\vec{\rho}_h,\\
b(t,\vec{\rho}_h(t),\vec{u}_h(t),\vec{\psi}_h(t)):=&-\delta\mathbf{A}^{-1}\mathbf{B}\vec{u}_h+\gamma_t\vec{\psi}_h+\mathbf{A}^{-1}\vec{f}(t,\rho_h(t);\phi_h).
\end{split}
\end{equation*}

We choose $T=0.5, \delta=0.20, \gamma=1.0,\rho_0(x)=\sin(\pi x)$ and the solution of above finite dimensional FBSDE is approximated by using Deep BSDE-1 algorithm. We adopt uniform mesh with $L$ internal nodes and uniform time grid. We use fully connected neural network comprising 2 hidden layers with $L+10$ neurons in each layer. Hyperbolic tangent is used as activation function for hidden layers and Adam optimizer adopted for training. We set batch size 512 for training purpose. The forward process is numerically approximated at the time grid by backward Euler scheme:\\
\begin{equation*}
\vec{\rho}_h(t_{j+1})=(I+\delta\mathbf{A}^{-1}\mathbf{B}\Delta t)^{-1}\big(\vec{\rho}_h(t_{j})-\gamma\vec{\rho}_h(t_j)\Delta W_{t_j}\big),\; j=0,1,\dots,J-1,\; \vec{\rho}_h(0)=\mathbf{A}^{-1}\vec{\rho_0}_\phi.
\end{equation*}

%

 With Deep BSDE-1, we simulate the approximate solution for $L=5,15,20$ with $\Delta t=0.05$, $L=25$ with $\Delta t=0.025$, $L=35$ with $\Delta t=0.167$ and $L=50$ with $\Delta t=0.001$. Figure \ref{fig_Ex_1} shows the mean value of $u(0,x)$ from 10 simulations and Table \ref{Rel_err_DBSDE1} shows the relative errors.  Letting $\hat{u}(0,x)$ be the approximation of $u(0,x)$, we investigate the relative error:
 \begin{equation*}
 	R_E= \frac{\int_D \left|u(0,x) -\frac{1}{10}\sum_{i=1}^{10}\hat{u}^i(0,x)\right|^2 \rd x}{\int_D |u(0,x)|^2 \rd x} .
 \end{equation*}
 In Example-1, we can see that Deep BSDE-1 improves the accuracy of the approximations as mesh sizes for both space and time domains decrease.


\begin{figure}
\begin{floatrow}
\ffigbox{%
    \includegraphics[width=8cm, height=5.6cm]{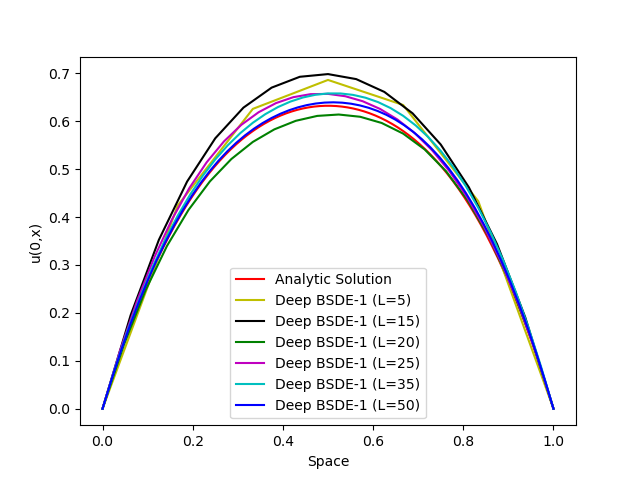}
}{%
  \caption{Example 1 ($\delta=0.2$, deep BSDE-1)}%
  \label{fig_Ex_1}
}
\capbtabbox{%
	\begin{tabular}{|c | c |c|} 
		\hline
		$L$& Relative Error & $\Delta t$\\ [0.5ex] 
		\hline 
		$5$ & 0.004395 & .05 \\ [0.5ex] 
		\hline
		$15$ & 0.009749  & .05 \\ [0.5ex] 
		\hline
		$20$ & 0.000893  & .05\\ [0.5ex] 
		\hline
		$25$ & 0.001699  & .025\\ [0.5ex] 
		\hline 
		$35$ & 0.001905  & .0167 \\ [0.5ex] 
		\hline
		$50$ & 0.000294  & .001 \\ [0.5ex] 
		\hline
	\end{tabular} 
}{%
\bigskip\medskip

  \caption{Relative Error}
	\label{Rel_err_DBSDE1}
}
\end{floatrow}
\end{figure}


\subsection{Example 2}
 Consider the following coupled and \emph{nonlocal} FBSPDE:
\begin{equation}\label{Ex3FSPDE}
\left\{\begin{array}{l}
\begin{split}
\rd\rho(t,x)&=\bigg(\delta\Delta \rho(t,x)+f_1\big(t,x,\rho(t,x),u(t,x)\big)\bigg)\,\rd t-f_3(t,x)\rd W_t,
\\
\rho(0,x)&=\rho_0(x)=\frac{\pi}{2}\sin(\pi x)+\frac{1}{2}\sin(2\pi x), \\
\rho(t,x)\bigg|_{\partial D}&=0,\\
-\rd u(t,x)&=\bigg(\delta\Delta u(t,x)+f_2\big(t,x,\rho(t,x),u(t,x)\big)\bigg)\,\rd t-\psi(t,x)\rd W_t
\\
u(T,x)&=g(\rho(T,x))=\arctan(\rho(T,x)), \\
u(t,x)\bigg|_{\partial D}&=0,
\end{split}
\end{array}\right.
\end{equation}
with
\begin{align*}
f_1\big(t,x,\rho(t),u(t)\big)=&\alpha\cdot\cos(u(t))-\frac{\alpha}{\sqrt{1+\rho(t)^2}}+\delta\pi^2\rho(t)+\delta\cdot\frac{2+\cos(W_t)}{2}\cdot\pi^2\sin(2\pi x)\\
&-\frac{\cos(W_t)}{12}\cdot\sin(2\pi x),\\
f_2\big(t,x,\rho(t),u(t)\big)=&\frac{2\delta\rho(t)}{\big(1+\rho(t)^2\big)^2}\cdot\bigg|\frac{\pi^2}{2}\cos(\pi x)+\frac{2+\cos(W_t)}{3}\cdot\pi\cos(2\pi x)\bigg|^2\\
&+\frac{2\delta}{1+\rho(t)^2}\cdot\bigg(\frac{\pi^3}{2}\sin(\pi x)+\frac{2+\cos(W_t)}{3}\cdot 2\pi^2\sin(2\pi x)\bigg)\\
&+\alpha\cdot u(t)-\alpha\cdot\arctan(\rho(t))+\frac{\rho(t)}{\big(1+\rho(t)^2\big)^2}\cdot\frac{\sin^2(W_t)\cdot\sin^2(2\pi x)}{36}\\
&-\frac{1}{1+\rho(t)^2}\bigg(\delta\pi^2\rho(t)+\delta\cdot\frac{2+\cos(W_t)}{2}\cdot\pi^2\sin(2\pi x)-\frac{\cos(W_t)}{12}\cdot\sin(2\pi x)\bigg)\\
&+\gamma\cdot\bigg(\int_{0}^{1}\sin(2\pi x)\rho(t)\rd x-\frac{2+\cos(W_t)}{12}\bigg),\\
f_3(t,x)=&\frac{\sin(W_t)}{6}\cdot\sin(2\pi x).
\end{align*}
The analytic solution gives
\begin{align*}
\rho(t,x)=&\frac{\pi}{2}\sin(\pi x)+\frac{2+\cos(W_t)}{6}\sin(2\pi x),\\
u(t,x)=&\arctan(\rho(t,x)).
\end{align*} 
The approximating finite dimensional FBSDE is given by
\begin{equation}\label{Ex3FDforw}
\left\{\begin{array}{l}
\begin{split}
\mathrm{d}\vec{\rho}_h(t)&=\mu(t,\vec{\rho}_h(t),\vec{u}_h(t))\mathrm{d}t-\sigma(t)\rd W_t,\quad
\vec{\rho}_h(0)=\mathbf{A}^{-1}\vec{\rho_0}_\phi,\\
-\mathrm{d}\vec{u}_h(t)&=b\big(t,\vec{\rho}_h(t),\vec{u}_h(t)\big)\mathrm{d}t- \vec{\psi}_h(t)\mathrm{d}W(t),\quad
\vec{u}_h(T)=\mathbf{A}^{-1}\vec{g}(\rho_h(T);\phi_h),
\end{split}
\end{array}\right.
\end{equation}
with 
\begin{align*}
\mu(t,\vec{\rho}_h(t),\vec{u}_h(t))=&-\delta\mathbf{A}^{-1}\mathbf{B}\vec{\rho}_h(t)+\mathbf{A}^{-1}\vec{f}_1\big(t,\rho_h(t),u_h(t);\phi_h\big),\\
b(t,\vec{\rho}_h(t),\vec{u}_h(t))=&-\delta\mathbf{A}^{-1}\mathbf{B}\vec{u}_h(t)+\mathbf{A}^{-1}\vec{f}_2\big(t,\vec{\rho}_h(t),u_h(t);\phi_h\big),\\
\sigma(t)=&\mathbf{A}^{-1}\vec{f_3}(t;\phi_h).
\end{align*}

When $\alpha=0.2=\gamma$ and $\delta=.001$, the resulting finite dimensional FBSDE is coupled and the solution is approximated via both Deep BSDE-2 and Deep BSDE-3 algorithms. In both schemes, we consider $T=0.5$, the uniform time grid with $\Delta t=0.05$ and  the uniform mesh with $L$ internal nodes. We use fully connected neural network comprising 2 hidden layers with $L+10$ neurons in each layer. Hyperbolic tangent is used as activation function for hidden layers and Adam optimizer is used for training. We use batch size 512 for training purpose. 
For time discretization of forward process, the backward Euler scheme is employed, which is written as:
\begin{align*}
&\vec{\rho}_h(t_{j+1})=\big(I+\delta\mathbf{A}^{-1}\mathbf{B}\Delta t\big)^{-1}\bigg(\vec{\rho}_h(t_{j})+\mathbf{A}^{-1}\vec{f}_1\big(t_j,\rho_h(t_j),u_h(t_j);\phi_h\big)\Delta t-\mathbf{A}^{-1}\vec{f_3}(t_j;\phi_h)\Delta W_{t_j}\bigg),\\ &\quad\quad\quad \quad \quad j=0,1,\dots,J-1,\\& \vec{\rho}_h(0)=\mathbf{A}^{-1}\vec{\rho_0}_\phi.
\end{align*}

With Deep BSDE-2, we compute the approximate solution for $L=5$ and $L=15$. Figure \ref{fig_2_Ex_2} shows the mean values of $u(0,x)$ from 10 runs and Table \ref{Rel_err_DBSDE2} shows the relative errors. The results from Deep BSDE-2 are not stable for $L\geq 20$ and the associated numerical results are not presented here. 

It is worth noting that the approximating performance of Deep BSDE-2 may be improved when $\Delta t$ is smaller, while herein we fix $\Delta t=0.05$ to compare these two methods: Deep BSDE-2 and Deep BSDE-3. Indeed,
with Deep BSDE-3, we simulate the approximate solution for $L=5,15,30$ and $L=50$. Figure \ref{fig_5_Ex_2} shows the mean value of $u(0,x)$ from 10 runs and Table \ref{Rel_err_DBSDE3} shows the relative errors. Clearly, Deep BSDE-3 provides stable solutions for even bigger values of $L$ compared to Deep BSDE-2 and the accuracy of Deep BSDE-3 is also higher than Deep BSDE-2.

\begin{figure}
\begin{floatrow}
\ffigbox{%
\includegraphics[width=8cm, height=8cm]{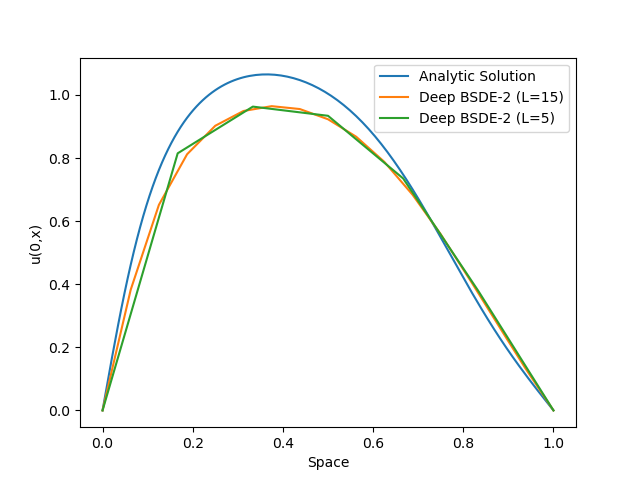}
}{%
	\caption{Example 2 (Deep BSDE-2) }
	\label{fig_2_Ex_2}
}
\capbtabbox{%
	\begin{tabular}{|c | c|}
		\hline
		 $L$ & Relative Error  \\ [0.5ex] 
		\hline
		5 & 0.012639 \\ [0.5ex] 
		\hline
			15 & 0.010200 \\ [0.5ex] 
		\hline
	\end{tabular}
}{%
\bigskip
\bigskip

\caption{Relative Error}
\label{Rel_err_DBSDE2}
}
\end{floatrow}
\end{figure}

%
%
%
%

\begin{figure}
\begin{floatrow}
\ffigbox{%
\includegraphics[width=8cm, height=8cm]{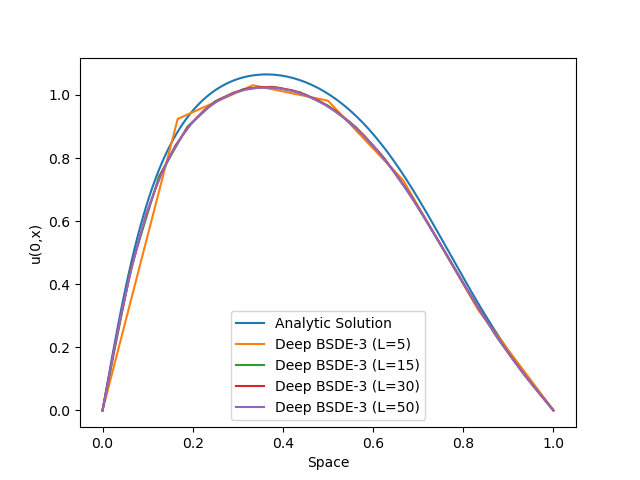}
}{%
	\caption{Example 2 (Deep BSDE-3)}
	\label{fig_5_Ex_2}
}
\capbtabbox{%
	\begin{tabular}{|c | c |} 
		\hline
		 $L$& Relative Error\\ [0.5ex] 
		 \hline
		 $5$ & 0.003284 \\ [0.5ex] 
		\hline
		 $15$ & 0.001618 \\ [0.5ex] 
		\hline
		$30$ & 0.001551 \\ [0.5ex] 
		\hline
		$50$ & 0.001671 \\ [0.5ex] 
		\hline
	\end{tabular}
}{%
\bigskip
\bigskip

\caption{Relative Error from Deep BSDE-3}
\label{Rel_err_DBSDE3}
}
\end{floatrow}
\end{figure}

\appendix
\section{Appendix}
\subsection{Proof of Lemma \ref{lem1}}

\begin{proof}[Proof of Lemma \ref{lem1}]
	The proof of the existence and uniqueness of solution to FBSDE \eqref{dFSPDE}-\eqref{dBSPDE} and $Assertion\;(i)$ is the same as that of Theorem \ref{Thm1}. We only need to prove $Assertion\;(ii)$. Computations involved in this proof will be divided into two parts.
	\par \textbf{Step 1}. This part is devoted to some estimates associated to the forward equation \eqref{dFSPDE}. Fix $t\in[0,T]$. Applying It\^o's formula to equation (\ref{dFSPDE}) for $\nabla\rho_h(t)$ yields $\bP$-a.s.
	\begin{align}
	\|&\nabla\rho_h(t)\|^2
	=\|\nabla\rho_0\|^2+2\int_{0}^{t}\big\langle\nabla\Delta_h\rho_h(s),\nabla\rho_h(s)\big\rangle\rd s+ \int_{0}^{t}\|\nabla \Pi_h f(s,\rho_h(s),u_h(s))\|^2\rd s
	\notag\\
	&
	\quad+2\int_{0}^{t}\big\langle \nabla \Pi_h F\big(s,\rho_h(s),\nabla\rho_h(s),u_h(s),\nabla u_h(s),{\psi_h}(s)\big),\nabla\rho_h(s)\big\rangle\rd s
	\notag\\
	&\quad
	-2\sum_{i=1}^{k}\int_{0}^{t}\big\langle\nabla \Pi_hf^i(s,\rho_h(s),u_h(s)),\nabla\rho_h(s)\big\rangle\rd W^i_s
	\notag\\
	&=\|\nabla\rho_0\|^2-2\int_{0}^{t}\| \Delta_h\rho_h(s) \|^2 \rd s+\int_{0}^{t}\|\nabla \Pi_h f(s,\rho_h(s),u_h(s))\|^2\rd s
	\notag\\
	&
	\quad+2\int_{0}^{t}\big\langle \nabla \Pi_h F\big(s,\rho_h(s),\nabla\rho_h(s),u_h(s),\nabla u_h(s),{\psi_h}(s)\big),\nabla\rho_h(s)\big\rangle\rd s
	\notag\\
	&\quad-2\sum_{i=1}^{k}\int_{0}^{t}\big\langle\nabla \Pi_hf^i(s,\rho_h(s),u_h(s)),\nabla\rho_h(s)\big\rangle\rd W^i_s. \label{lem1ref1}
	\end{align}
	Using Lipschitz property from Assumption \ref{ass1}, we have
	\begin{align*}
	&2\int_{0}^{t}\big\langle \nabla \Pi_h F(s,\rho_h(s),\nabla\rho_h(s),u_h(s),\nabla u_h(s),{\psi_h}(s)),\nabla\rho_h(s)\big\rangle\rd s\\
	&\le 2\int_{0}^{t}\big|\big\langle  \Pi_hF(s,\rho_h(s),\nabla\rho_h(s),u_h(s),\nabla u_h(s),{\psi_h}(s)),\Delta_h\rho_h(s)\big\rangle\big|\rd s\\
	&\le 2\int_{0}^{t}\| \Pi_h F\big(s,\rho_h(s),\nabla\rho_h(s),u_h(s),\nabla u_h(s),{\psi_h}(s)\big)\|\cdot\|\Delta_h\rho_h(s)\|\rd s\\
	&\le 2\int_{0}^{t}\bigg( L^{F}_1\bigg(\|\rho_h(s)\|+\|\nabla\rho_h(s)\|\bigg)+L^{F}_2\bigg(\|u_h(s)\|+\|\nabla u_h(s)\|+\|{\psi_h}(s)\|\bigg)\bigg)\cdot\|\Delta_h\rho_h(s)\|\rd s\\
	&\quad\quad + 2\int_{0}^{t}\|F^0_s|\cdot\|\Delta_h\rho_h(s)\|\rd s\\
	&\le 2\int_{0}^{t}\|F^0_s\|\cdot\|\Delta_h\rho_h(s)\|\rd s+2L^{F}_1\int_{0}^{t}\|\rho_h(s)\|\cdot\|\Delta_h\rho_h(s)\|\rd s\\
	&\quad\quad+2L^{F}_1\int_{0}^{t}\|\nabla\rho_h(s)\|\cdot\|\Delta_h\rho_h(s)\|\rd s+2L^{F}_2\int_{0}^{t}\|u_h(s)\|\cdot\|\Delta_h\rho_h(s)\|\rd s\\
	&\quad\quad+2L^{F}_2\int_{0}^{t}\|\nabla u_h(s)\|\cdot\|\Delta_h\rho_h(s)\|\rd s+2L^{F}_2\int_{0}^{t}\|{\psi_h}(s)\|\cdot\|\Delta_h\rho_h(s)\|\rd s\\
	&\le \int_{0}^{t}\bigg(\frac{1}{\eps_1}\|F^0_s\|^2+\frac{|L^{F}_1|^2}{\eps_2}\|\rho_h(s)\|^2
	+\frac{|L^{F}_2|^2}{\eps_4}\|u_h(s)\|^2+\frac{|L^{F}_2|^2}{\eps_5}\|\nabla u_h(s)\|^2+\frac{|L^{F}_2|^2}{\eps_6}\|\psi_h(s)\|^2\bigg)\rd s\\
	&\quad\quad+\bigg(\eps_1+\eps_2+\eps_3+\eps_4+\eps_5+\eps_6\bigg)\int_{0}^{t}\|\Delta_h\rho_h(s)\|^2\rd s+\frac{|L^{F}_1|^2}{\eps_3}\int_{0}^{t}\|\nabla\rho_h(s)\|^2\rd s.
	\end{align*}
	Taking $\eps_1=\eps_2=\eps_3=\eps_4=\eps_5=\eps_6=\frac{1}{4}$, we  conclude from \eqref{lem1ref1} that
	\begin{equation*}
	\begin{split}
	&\|\nabla\rho_h(t)\|^2+\frac{1}{2}\int_{0}^{t}\|\Delta_h\rho_h(s)\|^2\rd s\\
	&\le
	\|\nabla\rho_0\|^2
	+\int_{0}^{t}\|\nabla \Pi_h f(s,\rho_h(s),u_h(s))\|^2\rd s-2\sum_{i=1}^{k}\int_{0}^{t}\langle\nabla \Pi_h f^i(s,\rho_h(s),u_h(s)),\nabla\rho_h(s)\rangle\rd W^i_s\\
	&
	\quad\quad+4\int_{0}^{t}\bigg(\|F^0_s\|^2+|L^{F}_1|^2\|\rho_h(s)\|^2+|L^{F}_2|^2\|u_h(s)\|^2+|L^{F}_2|^2\|\nabla u_h(s)\|^2+|L^{F}_2|^2\|\psi_h(s)\|^2\bigg)\rd s\\
	&\quad\quad+4|L^{F}_1|^2\int_{0}^{t}\|\nabla\rho_h(s)\|^2\rd s.
	\end{split}
	\end{equation*}
	Noticing $\|\nabla\Pi_hf(s,\rho_h(s),u_h(s))\|\le C_e\| f(s,\rho_h(s),u_h(s))\|_{1,2}\le C_e\tilde{L}^f\big(1+\|\rho_h(s)\|_{1,2}+\|u_h(s)\|_{1,2}\big)$, we have further
	\begin{equation*}
	\begin{split}
	&\|\nabla\rho_h(t)\|^2+\frac{1}{2}\int_{0}^{t}\|\Delta_h\rho_h(s)\|^2\rd s\\
	&\le\|\nabla\rho_0\|^2+3|C_e\tilde{L}^f|^2t+4\int_{0}^{t}\|F^0_s\|^2\rd s-2\sum_{i=1}^{k}\int_{0}^{t}\langle\nabla\Pi_h f^i(s,\rho_h(s),u_h(s)),\nabla\rho_h(s)\rangle\rd W^i_s\\
	&
	\quad\quad+4\int_{0}^{t}\bigg(|L^{F}_1|^2\|\rho_h(s)\|^2+|L^{F}_2|^2\|u_h(s)\|^2+|L^{F}_2|^2\|\nabla u_h(s)\|^2+|L^{F}_2|^2\|\psi_h(s)\|^2\bigg)\rd s\\
	&\quad\quad+\bigg(4|L^{F}_1|^2+3|C_e\tilde{L}^{f}|^2\bigg)\int_{0}^{t}\|\rho_h(s)\|_{1,2}^2\rd s+3|C_e\tilde{L}^{f}|^2\bE\int_{0}^{t}\|u_h(s)\|_{1,2}^2\rd s.
	\end{split}
	\end{equation*}
Taking supremum over $t\in[0,\tau]$ for $\tau\in[0,T]$ and then expectations on both sides we have
	\begin{align}\label{lem1ref2}
	&\bE\bigg[\sup_{t\in [0,\tau]}\|\nabla\rho_h(t)\|^2\bigg]+\frac{1}{2}\bE\int_{0}^{\tau}\|\Delta_h\rho_h(s)\|^2\rd s\nonumber\\
	&\le \bE\bigg[\|\nabla\rho_0\|^2\bigg]+4\bE\int_{0}^{\tau}\|F^0_s\|^2\rd s+2\bE\bigg[\sup_{t\in [0,\tau]}\bigg|\int_{0}^{t}\sum_{i=1}^{k}\langle\nabla\Pi_h f^i(s,\rho_h(s),u_h(s)),\nabla\rho_h(s)\rangle\rd W^i_s\bigg|\bigg]\nonumber\\
	&+\bigg(4|L^{F}_1|^2+3|C_e\tilde{L}^{f}|^2\bigg)\bE\int_{0}^{\tau}\|\rho_h(s)\|_{1,2}^2\rd s+3|C_e\tilde{L}^{f}|^2\bE\int_{0}^{\tau}\|u_h(s)\|_{1,2}^2\rd s+3|C_e\tilde{L}^f|^2\tau
		\nonumber\\
		&
	+4\bE\int_{0}^{\tau}\bigg(|L^{F}_1|^2\|\rho_h(s)\|^2+|L^{F}_2|^2\|u_h(s)\|^2+|L^{F}_2|^2\|\nabla u_h(s)\|^2+|L^{F}_2|^2\|\psi_h(s)\|^2\bigg)\rd s.
	\end{align}
	For the terms involving stochastic integrals, we use BDG inequality to obtain 
	\begin{align*}
	&2\bE\bigg[\sup_{t\in [0,\tau]}\bigg|\int_{0}^{t}\sum_{i=1}^{k}\langle\nabla\Pi_h f^i(s,\rho_h(s),u_h(s)),\nabla\rho_h(s)\rangle\rd W^i_s\bigg|\bigg]\\
	& \le \tilde{C}\bE\bigg[\bigg(\int_{0}^{\tau}\sum_{i=1}^{k}\bigg|\langle\nabla\Pi_h f^i(s,\rho_h(s),u_h(s)),\nabla\rho_h(s)\rangle\bigg|^2\rd s\bigg)^\frac{1}{2}\bigg]\\
	&\le \tilde{C}\bE \bigg[\bigg(\int_{0}^{\tau}|C_e\tilde{L}^f|^2\bigg(1+\|\rho_h(s)\|_{1,2}+\|u_h(s)\|_{1,2}\bigg)^2\cdot\big\|\nabla\rho_h(s)\big\|^2\rd s\bigg)^\frac{1}{2}\bigg]\\
	&\le \tilde{C}\bE \bigg[\bigg(\sup_{t\in [0,\tau]}\big\|\nabla\rho_h(t)\big\|^2\cdot|C_e\tilde{L}^f|^2\int_{0}^{\tau}
	\bigg(1+\|\rho_h(s)\|_{1,2}+\|u_h(s)\|_{1,2}\bigg)^2\rd s\bigg)^\frac{1}{2}\bigg]\\
	&\le \bE \bigg[\eps_7\sup_{t\in [0,\tau]}\big\|\nabla\rho_h(t)\big\|^2+\frac{|\tilde{C}C_e\tilde{L}^f|^2}{\eps_7}\int_{0}^{\tau}\bigg(1+\|\rho_h(s)\|_{1,2}+\|u_h(s)\|_{1,2}\bigg)^2\rd s\bigg]\\
	&\le \eps_7\bE\bigg[\sup_{t\in [0,\tau]}\|\nabla\rho_h(t)\|^2\bigg]+\frac{3|\tilde{C}C_e\tilde{L}^{f}|^2}{\eps_7}\bE\int_{0}^{\tau}\|\rho_h(s)\|_{1,2}^2\rd s+\frac{3|\tilde{C}C_e\tilde{L}^{f}|^2}{\eps_7}\bE\int_{0}^{\tau}\|u_h(s)\|_{1,2}^2\rd s\\
	&\quad+\frac{3|\tilde{C}C_e\tilde{L}^{f}|^2}{\eps_7}\tau,
	\end{align*}
	 with $\eps_7=\frac{1}{2}$. This together with \eqref{lem1ref2} implies that
	\begin{align*}
	&\bE\bigg[\sup_{t\in [0,\tau]}\|\nabla\rho_h(t)\|^2\bigg]+\bE\int_{0}^{\tau}\|\Delta_h\rho_h(s)\|^2\rd s\\
	&\le 2\bE\bigg[\|\nabla\rho_0\|^2\bigg]+8\bE\int_{0}^{\tau}\|F^0_s\|^2\rd s+\bigg(8|L^{F}_1|^2+4(3|\tilde{C}|^2+1)|C_e\tilde{L}^{f}|^2\bigg)\bE\int_{0}^{\tau}\|\rho_h(s)\|_{1,2}^2\rd s\\
	&
	\quad\quad+8\bE\int_{0}^{\tau}\bigg(|L^{F}_1|^2\|\rho_h(s)\|^2+|L^{F}_2|^2\|u_h(s)\|^2+|L^{F}_2|^2\|\nabla u_h(s)\|^2+|L^{F}_2|^2\|\psi_h(s)\|^2\bigg)\rd s\\
	&\quad\quad+4(3|\tilde{C}|^2+1)|C_e\tilde{L}^{f}|^2\tau+|C_e\tilde{L}^f|^2(3+12|\tilde{C}|^2)\bE\int_{0}^{\tau}\|u_h(s)\|_{1,2}^2\rd s.
	\end{align*}
	Now using Gronwall's inequality and estimate from $Assertion\;(i)$ we have 
	\begin{align*}
	&\bE\bigg[\sup_{t\in [0,T]}\|\nabla\rho_h(t)\|^2\bigg]+\bE\int_{0}^{T}\|\Delta_h\rho_h(s)\|^2\rd s\\
	&\le C\bE\bigg[1+\|\rho_0\|^2+\|\nabla\rho_0\|^2+\|g^0\|^2\bigg]
	+C\bE\int_{0}^{T}\bigg(\sum_{i=1}^{k}\|f_s^{i,0}\|^2+\|F^0_s\|^2+\|G^0_s\|^2\bigg)\rd s,
	\end{align*}
	with $C=C(L_1^{f},L_2^f,\tilde{L}^{f},L^{g},L^{F}_1,L^{F}_2,L_1^{G},L_2^{G},T,C_e)$.
	
	\par \textbf{Step 2}. Then we conduct the computations for the backward equation \eqref{dBSPDE}. Fix $t\in[0,T]$. Applying It\^o's formula to equation (\ref{dBSPDE}) for $\nabla u_h(t)$ yields $\bP$-a.s.
	\begin{equation*}
	\begin{split}
	&\|\nabla u_h(t)\|^2=\|\nabla\Pi_h g(\rho_h(T)\|^2+2\int_{t}^{T}\langle\nabla\Delta_h u_h(s),\nabla u_h(s)\rangle\rd s-2\sum_{i=1}^{k}\int_{t}^{T}\langle\nabla\psi_h^i(s),\nabla u_h(s)\rangle\rd W^i_s\\
	&
	\quad\quad- \int_{t}^{T}\|\nabla\psi_h(s)\|^2\rd s+2\int_{t}^{T}\big\langle \nabla \Pi_h G(s,\rho_h(s),\nabla\rho_h(s),u_h(s),\nabla u_h(s),{\psi_h}(s)),\nabla u_h(s)\big\rangle\rd s\\
	&=\|\nabla\Pi_h g(\rho_h(T)\|^2-2\int_{t}^{T}\langle\Delta_h u_h(s),\Delta_h u_h(s)\rangle\rd s-2\sum_{i=1}^{k}\int_{t}^{T}\langle\nabla\psi_h^i(s),\nabla u_h(s)\rangle\rd W^i_s\\
	&
	\quad\quad- \int_{t}^{T}\|\nabla\psi_h(s)\|^2\rd s+2\int_{t}^{T}\big\langle \nabla \Pi_h G(s,\rho_h(s),\nabla\rho_h(s),u_h(s),\nabla u_h(s),{\psi_h}(s)),\nabla u_h(s)\big\rangle\rd s.
	\end{split}
	\end{equation*}
	Using $\|\nabla\Pi_h g(\rho_h(T)\|\le C_e\| g(\rho_h(T)\|_{1,2}\le C_e\tilde{L}^g\bigg(1+\|\rho_h(T)\|_{1,2}\bigg)$, we have
	\begin{align}\label{lem1ref3}
	&\|\nabla u_h(t)\|^2+2\int_{t}^{T}\|\Delta_h u_h(s)\|^2\rd s+ \int_{t}^{T}\|\nabla\psi_h(s)\|^2\rd s\nonumber\\
	&\le 2|C_e\tilde{L}^g|^2+2|C_e{L^{g}}|^2\|\rho_h(T)\|_{1,2}^2-2\sum_{i=1}^{k}\int_{t}^{T}\langle\nabla\psi_h^i(s),\nabla u_h(s)\rangle\rd W^i_s\nonumber\\
	&\quad\quad+2\int_{t}^{T}\bigg|\big\langle \Pi_h G(s,\rho_h(s),\nabla\rho_h(s),u_h(s),\nabla u_h(s),{\psi_h}(s)),\Delta_h u_h(s)\big\rangle\bigg|\rd s.
	\end{align}
	In view of the Lipschitz property from Assumption \ref{ass2}, we have
	\begin{align*}
	&2\int_{t}^{T}\big|\big\langle  \Pi_h G(s,\rho_h(s),\nabla\rho_h(s),u_h(s),\nabla u_h(s),{\psi_h}(s)),\Delta_h u_h(s)\big\rangle\big|\rd s\\
	&\le 2\int_{t}^{T}\| \Pi_h G\big(s,\rho_h(s),\nabla\rho_h(s),u_h(s),\nabla u_h(s),{\psi_h}(s)\big)\|\cdot\|\Delta_h u_h(s)\|\rd s\\
	&\le 2\int_{t}^{T}\bigg( L^{G}_1\big(\|\rho_h(s)\|+\|\nabla\rho_h(s)\|\big)+L^{G}_2\big(\|u_h(s)\|+\|\nabla u_h(s)\|+\|{\psi_h}(s)\|\big)\bigg)\cdot\|\Delta_h u_h(s)\|\rd s\\
	&\quad\quad +2\int_{t}^{T}\|G^0_s\|\cdot\|\Delta_h u_h(s)\|\rd s\\
	&\le 2\int_{t}^{T}\|G^0_s\|\cdot\|\Delta_h u_h(s)\|\rd s+2L^{G}_1\int_{t}^{T}\|\rho_h(s)\|\cdot\|\Delta_h u_h(s)\|\rd s+2L^{G}_1\int_{t}^{T}\|\nabla\rho_h(s)\|\cdot\|\Delta_h u_h(s)\|\rd s\\
	&\quad\quad+2L^{G}_2\int_{t}^{T}\|u_h(s)\|\cdot\|\Delta_h u_h(s)\|\rd s+2L^{G}_2\int_{t}^{T}\|\nabla u_h(s)\|\cdot\|\Delta_h u_h(s)\|\rd s\\&\quad\quad+2L^{G}_2\int_{t}^{T}\|{\psi_h}(s)\|\cdot\|\Delta_h u_h(s)\|\rd s\\
	&\le \int_{t}^{T}\bigg(\frac{1}{\eps_1}\|G^0_s\|^2+\frac{|L^{G}_1|^2}{\eps_2}\|\rho_h(s)\|^2+\frac{|L^{G}_1|^2}{\eps_3}\|\nabla\rho_h(s)\|^2+\frac{|L^{G}_2|^2}{\eps_4}\|u_h(s)\|^2+\frac{|L^{G}_2|^2}{\eps_6}\|\psi_h(s)\|^2\bigg)\rd s\\
	&\quad\quad+\bigg(\eps_1+\eps_2+\eps_3+\eps_4+\eps_5+\eps_6\bigg)\int_{t}^{T}\|\Delta_h u_h(s)\|^2\rd s+\frac{|L^{G}_2|^2}{\eps_5}\int_{t}^{T}\|\nabla u_h(s)\|^2\rd s.
	\end{align*}
	Taking $\eps_1=\eps_2=\eps_3=\eps_4=\eps_5=\eps_6=\frac{1}{4}$ and combining the above result with \eqref{lem1ref3} yield that
	\begin{align}\label{step2sup}
	&\|\nabla u_h(t)\|^2+\frac{1}{2}\int_{t}^{T}\|\Delta_h u_h(s)\|^2\rd s
	+ \int_{t}^{T}\|\nabla\psi_h(s)\|^2\rd s\nonumber\\
	&\le 2 |C_e\tilde{L}^g|^2+ 2|C_e{L^{g}}|^2\|\rho_h(T)\|_{1,2}^2+4\int_{t}^{T}\|G^0_s\|^2\rd s+4|L^{G}_2|^2\int_{t}^{T}\|\nabla u_h(s)\|^2\rd s\nonumber\\
	&
	\quad\quad+4\int_{t}^{T}\bigg(|L^{G}_1|^2\|\rho_h(s)\|^2+|L^{G}_1|^2\|\nabla\rho_h(s)\|^2+|L^{G}_2|^2\|u_h(s)\|^2+|L^{G}_2|^2\|\psi_h(s)\|^2\bigg)\rd s\nonumber\\
	&\quad\quad-2\sum_{i=1}^{k}\int_{t}^{T}\langle\nabla\psi_h^i(s),\nabla u_h(s)\rangle\rd W^i_s,\quad\text{a.s.}.
	\end{align}
	Then taking expectations on both sides we have in particular
	\begin{align}\label{estimatePsi_h}
	&\bE\int_{t}^{T}\|\nabla\psi_h(s)\|^2\rd s\nonumber\\
	&\le 2 \bE\bigg[|C_e\tilde{L}^g|^2+|C_e{L^{g}}|^2\|\rho_h(T)\|_{1,2}^2\bigg]+4\bE\int_{t}^{T}\|G^0_s\|^2\rd s+4|L^{G}_2|^2\bE\int_{t}^{T}\|\nabla u_h(s)\|^2\rd s\nonumber\\
	&
	\quad\quad+4\bE\int_{t}^{T}\bigg(|L^{G}_1|^2\|\rho_h(s)\|^2+|L^{G}_1|^2\|\nabla\rho_h(s)\|^2+|L^{G}_2|^2\|u_h(s)\|^2+|L^{G}_2|^2\|\psi_h(s)\|^2\bigg)\rd s.
	\end{align}
	On the other hand, taking supremum over $t\in[\tau,T]$ for $\tau\in[0,T]$ and taking expectations in \eqref{step2sup},  we have
	\begin{align}\label{lem1ref4}
	&\bE\bigg[\sup_{t\in [\tau,T]}\|\nabla u_h(t)\|^2\bigg]+\frac{1}{2}\bE\int_{\tau}^{T}\|\Delta_h u_h(s)\|^2\rd s+\bE\int_{\tau}^{T}\|\nabla\psi_h(s)\|^2\rd s\nonumber\\
	&\le 2 |C_e\tilde{L}^g|^2+2 |C_e{L^{g}}|^2\bE\bigg[\|\rho_h(T)\|_{1,2}^2\bigg]+4\bE\int_{\tau}^{T}\|G^0_s\|^2\rd s+4|L^{G}_2|^2\bE\int_{\tau}^{T}\|\nabla u_h(s)\|^2\rd s\nonumber\\
	&
	\quad\quad+4\bE\int_{\tau}^{T}\bigg(|L^{G}_1|^2\|\rho_h(s)\|^2+|L^{G}_1|^2\|\nabla\rho_h(s)\|^2+|L^{G}_2|^2\|u_h(s)\|^2+|L^{G}_2|^2\|\psi_h(s)\|^2\bigg)\rd s\nonumber\\
	&\quad\quad+2\bE\bigg|\sup_{t\in [0,\tau]}\int_{\tau}^{T}\sum_{i=1}^{k}\langle\nabla\psi_h^i(s),\nabla u_h(s)\rangle\rd W^i_s\bigg|.
	\end{align}
	Now we use BDG inequality for the terms involving stochastic integrals and obtain
	\begin{align*}
	&2\bE\bigg[\sup_{t\in [\tau,T]}\bigg|\int_{t}^{T}\sum_{i=1}^{k}\langle\nabla\psi_h^i(s),\nabla u_h(s)\rangle\rd W^i_s\bigg|\bigg]
	\le  \frac{1}{2} \bE\bigg[\sup_{t\in [\tau,T]}\|\nabla u_h(t)\|^2\bigg] +2\tilde{C}\bE\int_{\tau}^{T}\|\nabla\psi_1(s)\|^2\rd s,
	\end{align*}
	which together with \eqref{lem1ref4} implies that
	\begin{align*}
	&\frac{1}{2}\bE\bigg[\sup_{t\in [\tau,T]}\|\nabla u_h(t)\|^2\bigg]+\frac{1}{2}\bE\int_{\tau}^{T}\|\Delta_h u_h(s)\|^2\rd s+\bE\int_{\tau}^{T}\|\nabla\psi_h(s)\|^2\rd s\\
	&\le 2 |C_e\tilde{L}^g|^2+4\bE\int_{\tau}^{T}\|G^0_s\|^2\rd s+2\tilde{C}\bE\int_{\tau}^{T}\sum_{i=1}^{k}\|\nabla\psi_h^i(s)\|^2\rd s+4|L^{G}_2|^2\bE\int_{\tau}^{T}\|\nabla u_h(s)\|^2\rd s\\
	&
	\quad\quad+4\bE\int_{\tau}^{T}\bigg(|L^{G}_1|^2\|\rho_h(s)\|^2+|L^{G}_1|^2\|\nabla\rho_h(s)\|^2+|L^{G}_2|^2\|u_h(s)\|^2+|L^{G}_2|^2\|\psi_h(s)\|^2\bigg)\rd s\\
	&\quad\quad+2|C_e{L^{g}}|^2\bE\bigg[\|\rho_h(T)\|_{1,2}^2\bigg].
	\end{align*}
	Then using estimates from $Assertion\;(i)$ and from \eqref{estimatePsi_h}, and applying Gronwall's inequality, we have
	\begin{align*}
	&\frac{1}{2}\bE\bigg[\sup_{t\in [0,T]}\|\nabla u_h(t)\|^2\bigg]+\frac{1}{2}\bE\int_{0}^{T}\|\Delta_h u_h(s)\|^2\rd s+ \bE\int_{0}^{T}\|\nabla\psi_h(s)\|^2\rd s \\
	&\le C+C\bE\bigg[\|\rho_0\|_{1,2}^2+\|g^0\|^2\bigg]+C\bE\int_{0}^{T}\bigg(\|f_s^{0}\|^2+\|F^0_s\|^2+\|G^0_s\|^2\bigg)\rd s,
	\end{align*}
	with $C=C(L_1^{f},L_2^f,\tilde{L}^{f},L^{g},\tilde{L}^{g},L^{F}_1,L^{F}_2,L_1^{G},L_2^{G},T,C_e)$. Combining this with the estimate from $Step\;1$ finally gives $Assertion\;(ii)$.	
\end{proof}


\bigskip
 
 \noindent
 \textbf{ {\large Acknowledgements}}
 
 The study on numerical analysis of FBSPDEs was kindly suggested and commented by different colleagues when the second author was working in Berlin and Michigan, and their valuable comments and suggestions are greatly appreciated. J. Qiu would also like to thank Professors Erhan Bayraktar, Kai Du, Jing Zhang, and Chao Zhou for the helpful discussions on numerical analysis or deep BSDE methods.   
 
 \bibliographystyle{siam}

\begin{thebibliography}{10}

\bibitem{bayer2020pricing}
{\sc C.~Bayer, J.~Qiu, and Y.~Yao}, {\em Pricing options under rough volatility
  with backward {SPDEs}}, arXiv:2008.01241,  (2020).

\bibitem{Bernsoussan}
{\sc A.~Bensoussan}, {\em Maximum principle and dynamic programming approaches
  of the optimal control of partially observed diffusions}, Stochastics:
  formerly Stochastics and Stochastics Reports, 9 (1983), pp.~169--222.

\bibitem{bensoussan2017interpretation}
{\sc A.~Bensoussan, J.~Frehse, and S.~C.~P. Yam}, {\em On the interpretation of
  the master equation}, Stochastic Processes and their Applications, 127
  (2017), pp.~2093--2137.

\bibitem{CardaliaguetetelMFG2015}
{\sc P.~Cardaliaguet, F.~Delarue, J.-M. Lasry, and P.-L. Lions}, {\em The
  Master Equation and the Convergence Problem in Mean Field Games:(AMS-201)},
  vol.~201, Princeton University Press, 2019.

\bibitem{carmona2014master}
{\sc R.~Carmona and F.~Delarue}, {\em The master equation for large population
  equilibriums}, in Stochastic analysis and applications 2014, Springer, 2014,
  pp.~77--128.

\bibitem{conus2019weak}
{\sc D.~Conus, A.~Jentzen, and R.~Kurniawan}, {\em Weak convergence rates of
  spectral galerkin approximations for {SPDEs} with nonlinear diffusion
  coefficients}, Ann. Appl. Probab., 29 (2019), pp.~653--716.

\bibitem{da2014stochastic}
{\sc G.~Da~Prato and J.~Zabczyk}, {\em Stochastic equations in infinite
  dimensions}, Cambridge university press, 2014.

\bibitem{KaiDu2020}
{\sc K.~Du}, {\em $w^{2,p}$-solutions of parabolic spdes in general domains},
  Stoch. Process Appl., 130 (2020), pp.~1--19.

\bibitem{du2011strong}
{\sc K.~Du and S.~Tang}, {\em Strong solution of backward stochastic partial
  differential equations in {$C^2$} domains}, Probab. Theory Related Fields,
  154 (2011), pp.~255--285.

\bibitem{dunst2016forward}
{\sc T.~Dunst and A.~Prohl}, {\em The forward-backward stochastic heat
  equation: numerical analysis and simulation}, SIAM Journal on Scientific
  Computing, 38 (2016), pp.~A2725--A2755.

\bibitem{weinan2017deep}
{\sc W.~E, J.~Han, and A.~Jentzen}, {\em Deep learning-based numerical methods
  for high-dimensional parabolic partial differential equations and backward
  stochastic differential equations}, Communications in Mathematics and
  Statistics, 5 (2017), pp.~349--380.

\bibitem{egger2015ErrEstFE}
{\sc H.~Egger}, {\em Energy-norm error estimates for finite element
  discretization of parabolic problems}, arXiv preprint arXiv:1507.05183,
  (2015).

\bibitem{fuhrman2013stochastic}
{\sc M.~Fuhrman, Y.~Hu, and G.~Tessitore}, {\em Stochastic maximum principle
  for optimal control of spdes}, Applied Mathematics \& Optimization, 68
  (2013), pp.~181--217.

\bibitem{GilbargTrudingerEllipticPDE}
{\sc D.~Gilbarg and N.~S. Trudinger}, {\em Elliptic Partial Differential
  Equations of Second Order}, Springer Science and Business Media, 2001.

\bibitem{han2018deep}
{\sc J.~Han, A.~Jentzen, and W.~E}, {\em Solving high-dimensional partial
  differential equations using deep learning}, Proceedings of the National
  Academy of Sciences, 115 (2018), pp.~8505--8510.

\bibitem{Han&Long2019coupleddeep}
{\sc J.~Han and J.~Long}, {\em Convergence of the deep {BSDE} method for
  coupled {FBSDEs}}, arXiv:1811.01165v3,  (2019).

\bibitem{hornik1989multilayer}
{\sc K.~Hornik, M.~Stinchcombe, and H.~White}, {\em Multilayer feedforward
  networks are universal approximators}, Neural networks, 2 (1989),
  pp.~359--366.

\bibitem{hornik1990universal}
\leavevmode\vrule height 2pt depth -1.6pt width 23pt, {\em Universal
  approximation of an unknown mapping and its derivatives using multilayer
  feedforward networks}, Neural networks, 3 (1990), pp.~551--560.

\bibitem{Hu_Ma_Yong02}
{\sc Y.~Hu, J.~Ma, and J.~Yong}, {\em On semi-linear degenerate backward
  stochastic partial differential equations}, Probab. Theory Relat. Fields, 123
  (2002), pp.~381--411.

\bibitem{hure2019deep}
{\sc C.~Hur{\'e}, H.~Pham, and X.~Warin}, {\em Deep backward schemes for
  high-dimensional nonlinear pdes}, arXiv preprint arXiv:1902.01599v2,  (2019).

\bibitem{JiPeng2020CoupledBSDE}
{\sc S.~{Ji}, S.~{Peng}, Y.~{Peng}, and X.~{Zhang}}, {\em Three algorithms for
  solving high-dimensional fully coupled {FBSDEs} through deep learning}, IEEE
  Intelligent Systems, 35 (2020), pp.~71--84.

\bibitem{Kryov1996}
{\sc N.~V. Krylov}, {\em On {$L^p$} theory of stochastic partial differential
  equations in the whole space}, SIAM J. Math. Anal., 27 (1996), pp.~313--340.

\bibitem{Krylov_Rozovskii81}
{\sc N.~V. Krylov and B.~L. Rozovskii}, {\em Stochastic evolution equations},
  J. Sov. Math., 16 (1981), pp.~1233--1277.

\bibitem{LI2020124518}
{\sc Y.~Li and S.~Tang}, {\em Approximation of backward stochastic partial
  differential equations by a splitting-up method}, J. Math. Anal. Appl.,
  (2020).

\bibitem{MaYong97BSPDE}
{\sc J.~Ma and J.~Yong}, {\em Adapted solution of a degenrate backward {SPDE},
  with applications}, Stochastic Processes and their Applications, 70 (1997),
  pp.~59--84.

\bibitem{marquez2004some}
{\sc A.~M{\'a}rquez-Dur{\'a}n and J.~Real}, {\em Some results on nonlinear
  backward stochastic evolution equations}, Stochastic analysis and
  applications, 22 (2004), pp.~1273--1293.

\bibitem{PengBSPDEnonlin}
{\sc S.~Peng}, {\em Stochastic {Hamilton-Jacobi-Bellman} equations}, SIAM J.
  Control Optim., 30 (1992), pp.~284--304.

\bibitem{PR07}
{\sc C.~Pr\textrm{$\grave{\textrm{e}}$}v\textrm{$\hat{\textrm{o}}$}t and
  M.~R\textrm{$\ddot{\textrm{o}}$}ckner}, {\em A Concise Course on Stochastic
  Partial Differential Equations}, vol.~1905 of Lecture Notes in Mathematics,
  Springer, 2007.

\bibitem{qiu2018hormander}
{\sc J.~Qiu}, {\em H{\"o}rmander-type theorem for {It{\^o}} processes and
  related backward {SPDEs}}, Bernoulli, 24 (2018), pp.~956--970.

\bibitem{qiu2018viscosity}
\leavevmode\vrule height 2pt depth -1.6pt width 23pt, {\em Viscosity solutions
  of stochastic {Hamilton--Jacobi--Bellman Equations}}, SIAM J. Control Optim.,
  56 (2018), pp.~3708--3730.

\bibitem{tang2005semi}
{\sc S.~Tang}, {\em Semi-linear systems of backward stochastic partial
  differential equations in $r^n$}, Chinese Annals of Mathematics, 26 (2005),
  pp.~437--456.

\bibitem{wang2020l2}
{\sc Y.~Wang}, {\em $l^2$-regularity of solutions to linear backward stochastic
  heat equations, and a numerical application}, Journal of Mathematical
  Analysis and Applications, 486 (2020), p.~123870.

\bibitem{HongYin2014FBSPDE}
{\sc H.~Yin}, {\em Solvability of forward-backward stochastic partial
  differential equations}, Stoch. Process Appl., 124 (2014), pp.~2583--2604.

\end{thebibliography}

\end{document}